\documentclass[11pt]{amsart}
\usepackage[T1]{fontenc}
\usepackage[utf8]{inputenc}
\usepackage{amsmath,amssymb,amsthm,mathtools}
\usepackage{enumitem}
\usepackage[expansion=false]{microtype} 
\newtheorem{theorem}{Theorem}[section]
\newtheorem{lemma}[theorem]{Lemma}
\newtheorem{proposition}[theorem]{Proposition}
\newtheorem{corollary}[theorem]{Corollary}
\newtheorem{problem}[theorem]{Problem}
\theoremstyle{definition}
\newtheorem{definition}[theorem]{Definition}
\theoremstyle{remark}
\newtheorem{remark}[theorem]{Remark}

\newcommand{\R}{\mathbb{R}}
\newcommand{\Z}{\mathbb{Z}}
\newcommand{\Q}{\mathbb{Q}}
\newcommand{\N}{\mathbb{N}}
\newcommand{\T}{\mathbb{T}}
\newcommand{\Sph}{\mathbb{S}}
\newcommand{\dc}{d_{\mathrm c}}
\newcommand{\dist}{\operatorname{dist}}
\newcommand{\Leb}{\operatorname{Leb}}
\newcommand{\supp}{\operatorname{supp}}
\newcommand{\intr}{\operatorname{int}}

\begin{document}

\title[Quantitative spherical means along equidistributed spirals]{Quantitative time-averaged spherical means along equidistributed spirals: Diophantine rates and limits of uniformity}
\author{Claudio Fragomeli}
\date{}
\subjclass[2020]{Primary 37A45; Secondary 11K38, 11K60, 11J83}
\keywords{Kronecker flows, quantitative equidistribution, Diophantine approximation, spherical means, shrinking spirals, discrepancy}
\address{Independent researcher, Roccella Jonica (RC), Italy}
\email{cla.fragomeli@gmail.com}

\begin{abstract}
We establish quantitative equidistribution estimates for spherical observables sampled along Kronecker flows. For the monotone measure-preserving inverse-CDF parametrization $\Phi_d\colon\T^d\to\Sph^d$ and a Diophantine frequency of exponent $\tau$, every $\varphi\in C^s(\Sph^d)$, $0<s\le1$, has time averages converging to the spherical mean at rate $O(T^{-\vartheta})$, where
\[
\vartheta=\frac{s}{(1+s)(\tau+d)},
\]
uniformly in the initial phase and averaging window. Since $\Phi_d$ is discontinuous across a polar cut, the proof localizes the pullback away from the polar degeneracies and combines de la Vall\'ee Poussin approximation, Fourier decay, and the Diophantine lower bound. The same exponent holds for a continuous measure-preserving variant.

For comparison, mollification followed by Koksma--Hlawka gives, for every $\varepsilon>0$, the rate $O(T^{-s/[d(\tau+1)]+\varepsilon})$. Its power exponent is larger exactly when $\tau(d-1-s)<ds$; at equality the nominal exponents coincide, while the localization estimate has no $T^\varepsilon$ loss. On data vanishing near the polar degeneracies, the cutoff-free argument gives exponent $s/(\tau+d)$. We apply the estimates to shrinking spiral-type trajectories, integrable angular data, and homogeneous singularities, including a contrast between power-law and exponential radial contraction. Finally, on $\Sph^2$, for every prescribed $R(T)\to0$ we construct a rationally independent frequency and a smooth mean-zero observable with $\|\varphi\|_{C^1}\le1$ for which the error is not $O(R(T))$.
\end{abstract}

\maketitle

\section{Introduction}\label{sec:intro}

Let $\Sph^d\subset\R^{d+1}$, $d\ge1$, be the unit sphere equipped with normalized rotation-invariant measure $\sigma_d$, and write
\[
\langle\varphi\rangle=\int_{\Sph^d}\varphi\,d\sigma_d
\]
for the spherical mean. Consider the linear flow
\begin{equation}\label{eq:kron}
\theta(t)=\theta_0+t\omega \pmod{\Z^d},\qquad \theta_0\in\T^d,\quad \omega\in\R^d,
\end{equation}
on $\T^d=\R^d/\Z^d$. It is uniquely ergodic precisely when
\[
k\cdot\omega\neq0\qquad(k\in\Z^d\setminus\{0\}),
\]
i.e. when the coordinates of $\omega$ are linearly independent over $\Q$. This condition is weaker than the corresponding one for the discrete translation $n\mapsto\theta_0+n\omega$, where $1,\omega_1,\dots,\omega_d$ must be linearly independent over $\Q$.

The purpose of this paper is to obtain effective spherical equidistribution from the flow \eqref{eq:kron} and to use it for deterministic sampling along shrinking curves. We work with an explicit measure-preserving map
\[
\Phi_d\colon\T^d\longrightarrow\Sph^d,\qquad (\Phi_d)_\#\lambda_d=\sigma_d,
\]
constructed recursively from inverse cumulative-distribution latitudes in Section~\ref{sec:param}. Thus for a spherical observable $\varphi$ one expects the orbit $t\mapsto\Phi_d(\theta_0+t\omega)$ to sample $\Sph^d$ according to surface measure. Qualitatively, if $\omega$ is rationally independent, then for every continuous $\varphi$,
\begin{equation}\label{eq:qualitative}
\frac1T\int_0^T\varphi\bigl(\Phi_d(\theta_0+t\omega)\bigr)\,dt\longrightarrow\langle\varphi\rangle,
\end{equation}
uniformly in $\theta_0$; see Remark~\ref{rem:continuous}. The quantitative problem is subtler. The monotone parametrization $\Phi_d$ used here is discontinuous across a polar cut when regarded as a map on the torus, and its regularity degenerates near the poles. Consequently, even when $\varphi$ is H\"older on the sphere, the pullback $\varphi\circ\Phi_d$ is not in general a continuous torus observable to which a standard quantitative Fourier argument can be applied directly. Remark~\ref{rem:continuousparam} records that continuity is not intrinsically incompatible with measure preservation, and Remark~\ref{rem:jump} shows that for $d=2$ the jump of the pullback $\varphi\circ\Phi_2$ can be removed by subtracting an explicit sawtooth (the map $\Phi_2$ itself must of course remain sphere-valued, so it is the composition, not the parametrization, that is corrected). The point is instead to quantify the singular geometry of the particular monotone parametrization underlying the sampling trajectory, and to do so uniformly in the dimension.

\subsection*{Main results}
Assume that $\omega$ satisfies the Diophantine condition
\[
|k\cdot\omega|\ge\gamma |k|_\infty^{-\tau}\qquad(k\in\Z^d\setminus\{0\})
\]
with $\tau\ge d-1$. Our main localization result, Theorem~\ref{thm:main}, states that for $0<s\le1$ and $\varphi\in C^s(\Sph^d)$,
\begin{equation}\label{eq:intro-main-rate}
\left|\frac1T\int_a^{a+T}\varphi\bigl(\Phi_d(\theta_0+t\omega)\bigr)\,dt-\langle\varphi\rangle\right|
\le C\|\varphi\|_{C^s}\,T^{-\vartheta},
\qquad
\vartheta=\frac{s}{(1+s)(\tau+d)},
\end{equation}
uniformly in the initial phase $\theta_0$ and in the starting point $a$ of the time window. The proof localizes away from the polar sets, approximates the localized pullback by de la Vall\'ee Poussin polynomials, and combines Fourier decay with the Diophantine lower bound for $k\cdot\omega$. The cutoff error is controlled by one-dimensional occupation estimates for the coordinates of the flow. The uniformity in $a$ costs nothing --- for a translation flow the window $[a,a+T]$ started at $\theta_0$ is the window $[0,T]$ started at $\theta_0+a\omega$, so it is a formal consequence of the uniformity in $\theta_0$, which is what the proof actually delivers (Remark~\ref{rem:window}) --- but it is what allows the estimate to be applied on consecutive mesoscopic blocks with a frozen radius, which is what Theorem~\ref{thm:spiralblock} does, and we therefore state it explicitly. The word ``main'' refers here to the localization theorem because it addresses the structural transport of $C^s$ data through the singular monotone parametrization, not because its exponent is claimed to be pointwise optimal in every parameter regime: Remark~\ref{rem:HK} gives a complementary mollified Koksma--Hlawka estimate which is stronger for part of the range, for instance at $d=2$, $s=1$.

Indeed, let
\begin{equation}\label{eq:spiral}
\Gamma(t)=x_0+g(t)\Phi_d(\theta_0+t\omega),\qquad g(t)\searrow0.
\end{equation}
For the monotone map $\Phi_d$, the angular factor --- and hence in general $\Gamma$ --- has jump discontinuities when the torus orbit crosses the polar cut. We use ``spiral'' here in this dynamical sampling sense. Replacing $\Phi_d$ by the continuous measure-preserving variant $\widetilde\Phi_d$ of Remark~\ref{rem:continuousparam} produces a genuinely continuous shrinking curve, and Corollary~\ref{cor:tilde} gives the same angular exponent for that variant.
Section~\ref{sec:spirals} assumes that the angular profiles $f_r=f(x_0+r\,\cdot\,)$ are uniformly $C^s$ and that the radial modulus is H\"older of exponent $\kappa$ with a degeneration $\min(r,r')^{-\lambda}$ at the centre. The two ranges of $\lambda$ behave differently. If $\lambda<\kappa$, the radial hypothesis already forces the family $(f_r)$ to converge uniformly to a single limiting profile $f_0\in C^s(\Sph^d)$ at the rate $r^{\kappa-\lambda}$ (Lemma~\ref{lem:f0}), so the radial and angular errors decouple completely and Theorem~\ref{thm:spiral} gives
\[
O\!\left(T^{-\vartheta}\right)+O\Bigl(\frac1T\int_0^Tg(t)^{\kappa-\lambda}\,dt\Bigr),
\]
that is, for $g(t)=r_0(1+t)^{-b}$,
\[
O\!\left(T^{-\vartheta}+T^{-\min(b(\kappa-\lambda),1)}\log(2+T)\right)
\]
with no loss whatsoever in the angular exponent and no compatibility condition between $\kappa$, $\lambda$ and $b$ (Corollary~\ref{cor:power}). If instead $\lambda\ge\kappa$ no limiting profile need exist --- $f(x_0+r\xi)=a(\xi)\cos\log\frac1r$ with $\langle a\rangle=0$ is an example --- and the radius has to be tracked: Theorem~\ref{thm:spiralblock} freezes it on mesoscopic time blocks and applies \eqref{eq:intro-main-rate} on each translated block, which is where the window uniformity is used. For the power law $g(t)=r_0(1+t)^{-b}$ set $\beta=\kappa(b+1)-b\lambda$. In the degenerate range one always has $\beta\le\kappa\le1$; when $\beta>0$, Corollary~\ref{cor:powerdeg} gives, besides the radial mean term, the rate $O\bigl(T^{-\vartheta\beta/(\kappa+\vartheta)}\bigr)$ if $0<\beta<1$, and the endpoint rate $O\bigl((\log(2+T)/T)^{\vartheta/(1+\vartheta)}\bigr)$ if $\beta=1$ (which forces $\kappa=\lambda=1$). For $\beta\le0$ the general block estimate remains valid, but this power-law optimization alone does not yield a decaying radial-freezing error. Thus only in the degenerate range can the convergence rate reflect a genuine interaction between the arithmetic of the angular flow and the radial regularity of the shrinking trajectory; in the non-degenerate range the two errors are simply added.

We also consider two complementary regimes. First, Theorem~\ref{thm:L1} treats merely integrable angular data: Birkhoff's theorem gives convergence for almost every initial phase, and Remark~\ref{rem:L1sharp} shows that the exceptional null set cannot in general be removed. For homogeneous singularities at the center, an Abelian argument transfers ordinary ergodic averages to naturally normalized weighted averages. Power-law weights preserve convergence, and preserve the Diophantine rate for H\"older angular data, whereas Proposition~\ref{prop:exp} exhibits smooth mean-zero data for which exponentially weighted averages fail to converge for every phase. This establishes a concrete contrast between polynomial and exponential radial contraction without claiming an exhaustive borderline classification; the latter is left open in Section~\ref{sec:open}.

Second, the arithmetic hypothesis in \eqref{eq:intro-main-rate} cannot be replaced by rational independence if one seeks a rate uniform over the frequency. Theorem~\ref{thm:opt} proves that on $\Sph^2$, given any prescribed function $R(T)\to0$, there exist an irrational $\beta$, the frequency $\omega=(1,\beta)$, and a smooth mean-zero observable $\varphi$ such that
\[
\limsup_{T\to\infty}\frac{|T^{-1}\int_0^T\varphi(\Phi_2(t\omega))\,dt|}{R(T)}=\infty.
\]
The construction uses continued-fraction resonances, produces a Liouville frequency, and can be normalized so that $\|\varphi\|_{C^1}\le1$. Hence no single convergence modulus is valid over the full class of rationally independent frequencies, even on a bounded Lipschitz class. This statement is deliberately weaker than asserting that every non-Diophantine frequency has arbitrarily slow convergence; no such claim is made here.

\subsection*{Relation to previous work}
Quantitative uniform distribution for linear flows on tori is classical and is closely tied to Diophantine approximation and discrepancy; standard background is provided by \cite{KuipersNiederreiter,DrmotaTichy,Cassels,Schmidt}. More recent results address, among other questions, bounded-remainder phenomena for continuous linear flows \cite{Borda} and quantitative filling times under Diophantine or truncated Diophantine conditions \cite{DumasFischler}. These results concern the toral flow itself. The issue here is the transport of quantitative equidistribution through a spherical parametrization whose monotone inverse-CDF coordinates have polar degeneracies and a torus cut. The localization argument is designed precisely to retain an explicit rate after this transport.

A closely related and very active line of work quantifies the speed of convergence of Weyl sums over Kronecker sequences. Colzani \cite{Colzani} studies $N^{-1}\sum_{n\le N}f(x+n\alpha)-\int_{\T^d}f$ and shows how the logarithmic factors produced by the Koksma--Hlawka inequality together with the discrepancy of $\{n\alpha\}$ can be removed for suitable classes of $f$; his Theorem~4 also gives an abstract no-uniform-rate mechanism for Kronecker translations in translation-invariant Banach spaces, and Remark~17 records the continuous-time analogues obtained by replacing Weyl sums with integrals along $t\alpha$. Colzani, Gariboldi and Monguzzi \cite{ColzaniGariboldiMonguzzi} treat general summability methods, proving both a metric and a deterministic (Diophantine) estimate and showing that means other than the arithmetic ones can converge faster, and \cite{Chalmoukis} develops the corresponding theory for shift operators. Quadrature rules and the distribution of points on manifolds, including spheres, are studied in \cite{BrandoliniEtAl}, and accelerated convergence of ergodic averages along quasiperiodic orbits in \cite{DasYorke}.

These results are not directly applicable to Theorem~\ref{thm:main}, for a reason that should be stated precisely. Those among them that are comparable with the present problem require the observable to possess some form of global regularity: a modulus of continuity, membership in a Sobolev or Besov class, or bounded Hardy--Krause variation. The observable here is $F=\varphi\circ\Phi_d$, and by Remark~\ref{rem:cut}(c) the two one-sided limits of $\Phi_d$ across the cut $\{u_m=0\}$, $2\le m\le d$, differ by $2\bigl(\prod_{m<\ell\le d}\rho_\ell(u_\ell)\bigr)e_{m+1}$; freezing the remaining latitudes at the equator makes them antipodal. Hence, unless $\varphi$ takes the same value at that antipodal pair --- for a fixed pair, a closed hyperplane of $C^s(\Sph^d)$ --- arbitrarily small torus displacements across the cut change $F$ by an amount bounded below, so $F$ lies in no H\"older class on $\T^d$ and the approximation scheme of Lemma~\ref{lem:vdp} cannot be applied to it as it stands. This is a property of the composition, not of $\Phi_d$ alone: for $\varphi$ vanishing near the polar degeneracies, $F$ is H\"older on the torus (Remark~\ref{rem:localization}).

The bounded-variation hypothesis behaves differently, and we do not claim an obstruction there. Functions of bounded Hardy--Krause variation need not be continuous, and a jump across a coordinate hyperplane --- exactly the geometry of the cut $P_d$ --- is compatible with finite variation. What fails on the scale $C^s$, $0<s\le1$, is only the sufficient criterion at our disposal, namely pointwise existence and integrability of the mixed derivative $\partial_{u_1}\cdots\partial_{u_d}F$, which already for $d=2$ involves second derivatives of $\varphi$. But this is easily circumvented by mollifying $\varphi$ first, and it is worth doing so explicitly rather than leaving the comparison qualitative. Remark~\ref{rem:HK} carries this out: mollification at scale $\epsilon$ makes the pullback of bounded Hardy--Krause variation, Koksma--Hlawka applies, and optimizing in $\epsilon$ against the discrepancy of the linear flow \cite{Borda,DumasFischler} gives, for every $\varepsilon>0$,
\[
\bigl|A_{t_0,T}(\varphi)-\langle\varphi\rangle\bigr|\lesssim\|\varphi\|_{C^s}\,T^{-\vartheta_{\mathrm{HK}}+\varepsilon},
\qquad\vartheta_{\mathrm{HK}}=\frac{s}{d(\tau+1)} .
\]
Comparing with $\vartheta$, one has $\vartheta>\vartheta_{\mathrm{HK}}$ if and only if $\tau(d-1-s)>ds$. Thus neither route dominates: Theorem~\ref{thm:main} is the stronger statement on rough data and in high dimension --- for every $d$ and $\tau$ it wins for all small enough $s$, and for all $s\in(0,1]$ once $\tau(d-2)>d$ --- while the mollified Koksma--Hlawka bound is stronger at the Lipschitz endpoint in dimension two. The two are also of different kinds: the localization uses nothing beyond the $C^s$ norm appearing in its conclusion, its constant depends only on $(d,\gamma,\tau,s)$, and it loses no $T^{\varepsilon}$.

Two further comparisons are recorded below. Measure preservation does not force discontinuity: Remark~\ref{rem:continuousparam} exhibits a continuous, globally $\frac1d$-H\"older measure-preserving variant $\widetilde\Phi_d$. That map traverses each latitude twice and so parametrizes a different trajectory; Corollary~\ref{cor:tilde} shows nonetheless that Theorem~\ref{thm:main} covers it with the same exponent $\vartheta$, which is better than the exponent $\frac{s}{d(\tau+d)}$ that its global $\frac1d$-H\"older regularity would give by itself. Second, for $d=2$ the jump across the cut is constant, since the product $\prod_{2<\ell\le2}\rho_\ell$ is empty; subtracting an explicit sawtooth from $F$ then restores torus-H\"older regularity and makes a cutoff-free Fourier argument available. Remark~\ref{rem:jump} carries this out and returns $\frac{s}{2(\tau+2)}$, equal to $\vartheta$ at $s=1$ and strictly worse below. For $d\ge3$ the jump depends on the remaining latitudes and is itself discontinuous across the remaining cuts, so the correction would have to be built recursively; we have not carried this out and do not claim that it must degrade.

The cost of the localization step is quantified in Remark~\ref{rem:localization}: on the subclass of data vanishing near the polar degeneracies, where no cutoff is needed, the same lemmas return $\frac{s}{\tau+d}$, so that $\vartheta$ is smaller by exactly the factor $1+s$, in every dimension. That factor is the price of a cutoff whose scale must be chosen as a function of $T$, and it is paid only because no decay of $\varphi$ towards the polar set is assumed. The exponent $\vartheta$ itself of course depends on $d$, through $\tau+d$, exactly as $\frac{s}{\tau+d}$ does; what does not depend on $d$ is the ratio between the two.

One further difference should be recorded: the averages considered here are continuous-time averages along a flow, rather than Weyl sums over a discrete orbit. The estimate is also stated uniformly in the position of the averaging window; as noted above, that uniformity is a formal consequence of uniformity in the initial phase and is not an additional property to be proved, and it is used only in the degenerate range of Section~\ref{sec:spirals}, where the radius must be frozen block by block.

There is also a substantial literature on discrepancy, spherical designs, and quasi-Monte Carlo integration on spheres; see the survey \cite{BrauchartGrabner}. In particular, Brauchart and Dick \cite{BrauchartDick} lift digital nets to $\Sph^2$ through an area-preserving map, while QMC designs achieve optimal-order integration error in Sobolev spaces on $\Sph^d$ \cite{BrauchartSaffSloanWomersley}. Spiral constructions in particular have a long history in that literature: the generalized spiral points of Rakhmanov, Saff and Zhou \cite{RakhmanovSaffZhou}, popularized in \cite{SaffKuijlaars} and used for star catalogues in \cite{Bauer}, place $N$ nodes along a curve winding from pole to pole in a way that keeps neighbouring turns at comparable distance. The kinship with the trajectory studied here is real --- the latitude of \eqref{eq:spiral} is likewise swept monotonically --- but the design principle is opposite: there the number of turns is chosen as a function of $N$ so as to equidistribute the nodes, whereas here the winding is imposed by the frequency $\omega$ and cannot be tuned. Those constructions allow the nodes to be designed for numerical integration. In the present problem the sampling points are constrained to lie on the image of a single linear orbit, and the averaging interval may start at an arbitrary time. The arithmetic properties of $\omega$ therefore enter the error estimate directly. Our bound is not intended to compete with optimal QMC rates for freely chosen point sets; it addresses a different, dynamically constrained sampling problem.

The no-uniform-rate statement of Section~\ref{sec:opt} must likewise be placed against a classical background, since the absence of a universal rate in ergodic theorems is a known general phenomenon. Krengel \cite{Krengel} proved that for every ergodic invertible measure-preserving transformation of $[0,1]$ and every null sequence of positive reals there is a \emph{continuous} function whose Birkhoff averages converge, almost everywhere, more slowly than the prescribed sequence; see also Kakutani and Petersen \cite{KakutaniPetersen}. For circle rotations and Kronecker sequences specifically, the growth of Birkhoff sums of H\"older data is analysed by Kochergin \cite{Kochergin}; Moshchevitin \cite{Moshchevitin} revisits an example of Poincar\'e, gives multidimensional formulations in terms of Diophantine exponents, and proves the non-existence of a universal continuous function; and Chebotarenko \cite{Chebotarenko} studies the $\liminf$ behaviour of the sums $\sum_{k=0}^{Q-1}f(k\theta+x)$ for continuous $f$, obtaining results that depend both on the regularity class of $f$ and on the Diophantine properties of $\theta$. These last three works concern the discrete Kronecker orbit and describe how small, or how large, Birkhoff sums can become; Theorem~\ref{thm:opt} is instead a $\limsup$ statement for continuous-time averages of a spherical observable, and is complementary to them.

Accordingly, Theorem~\ref{thm:opt} is not presented as a new abstract slow-convergence principle: such mechanisms are already available for fixed ergodic transformations, and Colzani's framework includes Kronecker translations and a continuous-time analogue. The theorem is weaker in that it does not assert arbitrarily slow convergence for a \emph{given} frequency: the frequency $\beta$ is constructed jointly with the observable and with the target modulus. Its specific contribution here is the simultaneous realization of the obstruction within the spherical model treated in the paper, at the prescribed phase $\theta_0=0$, by an observable that is $C^\infty$, mean zero, and normalized in precisely the norm in which Theorem~\ref{thm:main} measures its data, namely $\|\varphi\|_{C^1(\Sph^2)}\le1$ (Remark~\ref{rem:normconv}). In dimension $d=2$, therefore, Theorems~\ref{thm:main} and~\ref{thm:opt} bracket one and the same bounded class of observables in the following limited sense: the Diophantine class admits the upper rate of Theorem~\ref{thm:main}, whereas over all rationally independent frequencies no prescribed modulus is uniform. This is the sense in which the Diophantine hypothesis cannot simply be replaced by rational independence in a frequency-uniform statement. The comparison does not claim sharpness for any fixed frequency. It is genuinely two-dimensional: Theorem~\ref{thm:opt} is proved for $d=2$ and for the initial phase $\theta_0=0$ only, the building blocks and the continued-fraction mechanism being planar and the argument using the phase to fix the sign of the resonant contribution. The extension to $d\ge3$, and to an arbitrary phase, is left open in Section~\ref{sec:open}.

Within this framework, the primary contribution is the phase-uniform localization estimate \eqref{eq:intro-main-rate} for the singular monotone pullback (and, by Corollary~\ref{cor:tilde}, for its continuous variant), together with the quantitative account in Remarks~\ref{rem:localization}--\ref{rem:HK} of the cost and range of that method. A second, model-specific contribution is the smooth, $C^1$-normalized realization in Section~\ref{sec:opt} of the no-uniform-modulus obstruction within this spherical Kronecker setting. The shrinking-trajectory and weighted-singular-data results are applications of the first contribution. We do not claim that the exponent $\vartheta$ is sharp within a fixed Diophantine class; determining the optimal exponent is one of the open problems in Section~\ref{sec:open}.

\subsection*{Organization}
Section~\ref{sec:param} constructs $\Phi_d$ and proves its push-forward and regularity properties. Section~\ref{sec:lemmas} collects the Fourier, approximation, oscillatory-integral, occupation-time, and recurrence estimates used later. Section~\ref{sec:main} proves the quantitative window-uniform equidistribution theorem. Section~\ref{sec:spirals} derives quantitative shrinking-spiral means. Section~\ref{sec:rough} treats integrable angular data and normalized weighted averages for singular data. Section~\ref{sec:opt} proves the absence of a rate uniform over all rationally independent frequencies. Section~\ref{sec:open} lists open problems.

\subsection*{Notation}
We write $e(x)=e^{2\pi i x}$ and $|k|_\infty=\max_m|k_m|$ for $k\in\Z^d$. On $\T^d$ we use $\dist_{\T^d}(u,u')=\max_m\dist_{\T}(u_m,u'_m)$; the cut metric $\dc$ is introduced in Section~\ref{sec:param}. For $0<s\le1$ we write $[\cdot]_s$ for the H\"older seminorm and $\|\cdot\|_{C^s}=\|\cdot\|_\infty+[\cdot]_s$, using the torus metric unless otherwise stated and the Euclidean chordal metric on $\Sph^d$. Haar probability measure on $\T^d$ is denoted by $\lambda_d$, and Fourier coefficients are $\widehat F(k)=\int_{\T^d}F(u)e(-k\cdot u)\,du$. Constants $C,c$ may change from line to line; subscripts indicate dependence.

\section{Diophantine classes and a measure-preserving parametrization of the sphere}\label{sec:param}

\begin{definition}[Diophantine classes]\label{def:dioph}
For $\gamma\in(0,1]$ and $\tau\ge d-1$ let
\[
\mathcal D(\gamma,\tau)=\bigl\{\omega\in\R^d :\ |k\cdot\omega|\ge\gamma\,|k|_\infty^{-\tau}\ \text{for all }k\in\Z^d\setminus\{0\}\bigr\}.
\]
\end{definition}

Taking $k=e_m$ shows $|\omega_m|\ge\gamma$ for all $m$ whenever $\omega\in\mathcal D(\gamma,\tau)$. By a standard Borel--Cantelli argument (see \cite[Ch.~VII]{Cassels} or \cite{Schmidt}), for every $\tau>d-1$ almost every $\omega\in\R^d$ belongs to $\mathcal D(\gamma,\tau)$ for some $\gamma>0$: the exceptional set for fixed $k$ has measure $O(\gamma|k|_\infty^{-\tau-1})$ locally, and $\sum_{k\neq0}|k|_\infty^{-\tau-1}<\infty$ precisely when $\tau>d-1$.

\begin{definition}[The map $\Phi_d$]\label{def:map}
Set $\Phi_1(u_1)=(\cos2\pi u_1,\sin2\pi u_1)$. For $m\ge2$ define
\[
G_m\colon[-1,1]\longrightarrow[0,1],\qquad
G_m(z)=c_m\int_{-1}^{z}(1-v^2)^{\frac{m-2}2}\,dv,
\]
\[
c_m^{-1}=\int_{-1}^{1}(1-v^2)^{\frac{m-2}2}\,dv,
\]
which is a continuous increasing bijection; let $z_m=G_m^{-1}\colon[0,1]\to[-1,1]$ and $\rho_m(u)=\sqrt{1-z_m(u)^2}$. Recursively, for $u=(u_1,\dots,u_m)\in\T^m$,
\[
\Phi_m(u)=\bigl(\rho_m(u_m)\,\Phi_{m-1}(u_1,\dots,u_{m-1}),\ z_m(u_m)\bigr)\in\Sph^m\subset\R^{m+1}.
\]
(For $u_m\in\T$ we always use its unique representative in $[0,1)$, so the value on the cut is the one corresponding to $z_m(0)=-1$. When a closed fundamental cube $[0,1]^m$ is used for interval or variation arguments, we additionally use the natural one-sided interval extension $z_m(1)=+1$ and $\rho_m(1)=0$; this auxiliary boundary value represents the opposite side of the cut and is not part of the torus definition. Thus $\Phi_m$ is well defined as a Borel map on $\T^m$ and continuous off the polar set $P_m=\bigcup_{2\le\ell\le m}\{u_\ell=0\}$.)
\end{definition}

\begin{lemma}[Push-forward]\label{lem:push}
For every $d\ge1$, $(\Phi_d)_\#\lambda_d=\sigma_d$; equivalently, for every $\varphi\in C(\Sph^d)$,
\[
\int_{\T^d}\varphi(\Phi_d(u))\,du=\int_{\Sph^d}\varphi\,d\sigma_d.
\]
\end{lemma}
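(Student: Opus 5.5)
We argue by induction on $d$, using the slicing (cylindrical) decomposition of the sphere's surface measure. The case $d=1$ is immediate: $\Phi_1(u_1)=(\cos2\pi u_1,\sin2\pi u_1)$ pushes Lebesgue measure on $\T$ forward to normalized arc length on $\Sph^1$, since it is the standard angle parametrization with speed $2\pi$.

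For the inductive step, assume $(\Phi_{d-1})_\#\lambda_{d-1}=\sigma_{d-1}$. The key classical fact we use is the disintegration of $\sigma_d$ along the last coordinate. Write $x=(\sqrt{1-z^2}\,\eta,z)$ with $z\in[-1,1]$ and $\eta\in\Sph^{d-1}$. Then
\[
\int_{\Sph^d}\varphi\,d\sigma_d=c_d\int_{-1}^{1}(1-z^2)^{\frac{d-2}2}\int_{\Sph^{d-1}}\varphi\bigl(\sqrt{1-z^2}\,\eta,z\bigr)\,d\sigma_{d-1}(\eta)\,dz .
\]
This follows from the surface element $(1-z^2)^{\frac{d-1}2}\,d\sigma^{\mathrm{un}}_{d-1}\cdot(1-z^2)^{-1/2}dz$. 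The unnormalized density $(1-z^2)^{(d-2)/2}$ is then normalized by $c_d$, because both sides are probability measures (take $\varphi\equiv1$). To justify this formula we would either cite it as standard, or verify it via the Gaussian or polar-coordinates identity on $\R^{d+1}$. Rotation invariance in the $\eta$ variable guarantees that the inner normalized measure is indeed $\sigma_{d-1}$.

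Given the disintegration, we compute by Fubini:
\[
\int_{\T^d}\varphi(\Phi_d(u))\,du=\int_0^1\int_{\T^{d-1}}\varphi\bigl(\rho_d(u_d)\Phi_{d-1}(u'),z_d(u_d)\bigr)\,du'\,du_d .
\]
We then evaluate the two integrals in turn.

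\emph{Inner integral.} For fixed $u_d$, the function $\eta\mapsto\varphi(\rho_d(u_d)\eta,z_d(u_d))$ is continuous on $\Sph^{d-1}$. The induction hypothesis therefore turns the inner integral into the $\sigma_{d-1}$-average of this function.

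\emph{Outer integral.} Substituting $u_d=G_d(z)$, so that $du_d=c_d(1-z^2)^{(d-2)/2}dz$, converts the $u_d$-integral into exactly the right-hand side of the disintegration. The substitution is legitimate because $G_d$ is an absolutely continuous increasing bijection of $[-1,1]$ onto $[0,1]$, and the single point $u_d=0$ (where the representative convention applies) is a null set. Measurability of the integrand is clear, since $\Phi_d$ is Borel.

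Equality of integrals for all $\varphi\in C(\Sph^d)$ then gives equality of the measures by the Riesz representation theorem. The only step requiring genuine care is the slice formula for $\sigma_d$ with the correct exponent $(d-2)/2$; everything else is Fubini plus a one-dimensional change of variables. The case $d=2$, where the density is uniform (Archimedes), is a useful sanity check.
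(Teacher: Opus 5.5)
Your proposal is correct and follows the paper's proof essentially step for step. The shared route is: induction on $d$; the slice formula for $\sigma_d$ with normalizing constant $c_d$; the substitution $z=z_d(u_d)$, $du_d=c_d(1-z^2)^{\frac{d-2}2}dz$; and the inductive hypothesis applied to the continuous slice function $\eta\mapsto\varphi(\rho_d(u_d)\eta,z_d(u_d))$. Your surface-element derivation of the slice formula and your appeal to Riesz representation only make explicit what the paper handles by citation and by the word ``equivalently''.
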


\begin{proof}
Induction on $d$. The case $d=1$ is the arc-length parametrization of the circle. For $d=m\ge2$ we use the slice (Fubini) formula for the normalized measure on $\Sph^m$ (see, e.g., \cite[Ch.~1]{AtkinsonHan}): for $\varphi\in C(\Sph^m)$,
\begin{equation}\label{eq:slice}
\int_{\Sph^m}\varphi\,d\sigma_m=c_m\int_{-1}^{1}(1-z^2)^{\frac{m-2}2}\int_{\Sph^{m-1}}\varphi\bigl(\sqrt{1-z^2}\,y,\,z\bigr)\,d\sigma_{m-1}(y)\,dz,
\end{equation}
with $c_m$ as above precisely because $\sigma_m$ is a probability measure. In \eqref{eq:slice} substitute $z=z_m(u_m)$, so that $du_m=G_m'(z)\,dz=c_m(1-z^2)^{\frac{m-2}2}\,dz$; the right-hand side becomes
\begin{align*}
&\int_0^1\!\int_{\Sph^{m-1}}
\varphi\bigl(\rho_m(u_m)y,\,z_m(u_m)\bigr)\,d\sigma_{m-1}(y)\,du_m\\
&\qquad=\int_0^1\!\int_{\T^{m-1}}
\varphi\bigl(\Phi_m(u)\bigr)\,du_1\cdots du_{m-1}\,du_m,
\end{align*}
where the inner identity is the inductive hypothesis applied, for each fixed $u_m$, to the continuous function $y\mapsto\varphi(\rho_m(u_m)y,z_m(u_m))$ on $\Sph^{m-1}$.
\end{proof}

\begin{remark}[Uniform latitude fails for $d\ge3$]\label{rem:latitude}
For $m=2$ one has $G_2(z)=\frac{1+z}2$, i.e.\ $z_2(u)=2u-1$: the latitude may be taken uniform in $u$, which is Archimedes' theorem and explains why the elementary uniform-latitude parametrization works on $\Sph^2$ (and trivially on $\Sph^1$, which has no latitude). For $m\ge3$, however, the latitudinal density $(1-z^2)^{(m-2)/2}$ is not constant. The uniform-latitude map obtained by taking $z=2u-1$ therefore pushes Lebesgue measure to a measure whose density with respect to $\sigma_m$ is proportional to $(1-z^2)^{-(m-2)/2}$; in particular it is not measure preserving. The inverse-CDF choice in Definition~\ref{def:map} is what produces the correct spherical measure in every dimension.
\end{remark}

\begin{remark}[A continuous measure-preserving variant]\label{rem:continuousparam}
The discontinuity of $\Phi_d$ on the torus is a feature of the monotone one-pass latitude coordinate, not an intrinsic consequence of measure preservation. Let $q\colon\T\to[0,1]$ be the tent map
\[
q(u)=\begin{cases}2u,&0\le u\le\frac12,\\ 2(1-u),&\frac12\le u<1.\end{cases}
\]
Then $q$ is continuous on $\T$ and $q_\#\lambda_1$ is Lebesgue measure on $[0,1]$. Define recursively $\widetilde\Phi_1=\Phi_1$ and
\[
\widetilde\Phi_m(u)=\bigl(\rho_m(q(u_m))\,\widetilde\Phi_{m-1}(u_1,\dots,u_{m-1}),\ z_m(q(u_m))\bigr).
\]
Because $q$ is $2$-Lipschitz and $q_\#\lambda_1$ is Lebesgue measure on $[0,1]$, induction together with the slice formula in Lemma~\ref{lem:push} shows that $\widetilde\Phi_d$ is continuous and $(\widetilde\Phi_d)_\#\lambda_d=\sigma_d$. The interval estimates \eqref{eq:interval} below, composed with $q$, give by the same recursive argument a global $1/d$-H\"older bound with respect to the torus metric; taking $u_d\to0$ shows that this exponent is sharp for this construction. We retain the monotone map $\Phi_d$ because it is the map for which the sampling problem is posed: its latitude coordinate is one-to-one on the cut fundamental domain, so that in each latitude a single pass of the flow through $[0,1)$ sweeps the sphere monotonically from pole to pole. The map $\widetilde\Phi_d$ traverses every latitude twice, once in each direction, and therefore describes a genuinely different trajectory; results about $\widetilde\Phi_d$ are results about a different sampling problem rather than weaker versions of those proved here. That said, the two problems are not in competition: Corollary~\ref{cor:tilde} shows that the proof of Theorem~\ref{thm:main} covers $\widetilde\Phi_d$ verbatim and with the same exponent $\vartheta$, which is better than what its global $\frac1d$-H\"older regularity alone would give. No argument below relies on discontinuity being unavoidable for all measure-preserving parametrizations.
\end{remark}

We now quantify the regularity of $\Phi_d$. A point deserves emphasis at the outset: since $z_m(0^+)=-1$ while $z_m(1^-)=+1$, the map $\Phi_d$ is genuinely discontinuous across the polar cut $P_d$ for $d\ge2$, and consequently no H\"older (indeed, no uniform-continuity) estimate can hold with respect to the torus metric $\dist_{\T^d}$; see Remark~\ref{rem:cut}. The correct quantitative statement is expressed in the cut metric: identifying $\T^d$ with the fundamental domain $[0,1)^d$, set
\begin{equation}\label{eq:cutmetric}
\dc(u,u'):=\max\Bigl(\dist_{\T}(u_1,u'_1),\ \max_{2\le m\le d}|u_m-u'_m|\Bigr),
\end{equation}
where the coordinates $u_m,u'_m$, $m\ge2$, are read through their representatives in $[0,1)$. Then $\dist_{\T^d}\le\dc\le1$, with $\dist_{\T^d}(u,u')=\dc(u,u')$ precisely when, for every $m\ge2$, the geodesic realizing $\dist_{\T}(u_m,u'_m)$ does not cross the cut $\{u_m=0\}$. Set, for $0<\eta<\frac14$,
\[
\T^d_\eta=\bigl\{u\in\T^d :\ \min_{2\le m\le d}\min(u_m,1-u_m)\ge\eta\bigr\}\qquad(\text{with }\T^1_\eta=\T^1).
\]

\begin{lemma}[H\"older and localized Lipschitz bounds]\label{lem:reg}
Let $d\ge1$. There is $C_d<\infty$ such that:
\begin{enumerate}[label=\textup{(\alph*)}, leftmargin=2.2em]
\item $|\Phi_d(u)-\Phi_d(u')|\le C_d\,\dc(u,u')^{1/d}$ for all $u,u'\in\T^d$;
\item for every $\eta\in(0,\frac14)$ and all $u,u'\in\T^d_\eta$,
\[
|\Phi_d(u)-\Phi_d(u')|\le C_d\,\eta^{\frac1d-1}\,\dc(u,u').
\]
\end{enumerate}
Moreover the exponent $1/d$ in \textup{(a)} cannot be improved for $d\ge2$. In particular, both bounds hold with $\dist_{\T^d}(u,u')$ on the right-hand side whenever the torus distance is realized without crossing the cut, i.e.\ whenever $\dist_{\T^d}(u,u')=\dc(u,u')$.
\end{lemma}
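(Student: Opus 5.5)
Before running the induction on $d$, I will establish one-dimensional interval estimates for the latitude functions $z_m,\rho_m$ on $[0,1]$, using the closed-interval extension of Definition~\ref{def:map}. Near $z=\pm1$ one has $G_m(z)\asymp(1\mp z)^{m/2}$, since $1-v^2\asymp 1\mp v$ there. Hence $1-|z_m(u)|\asymp \delta(u)^{2/m}$, where $\delta(u)=\min(u,1-u)$. The derivative $z_m'=1/G_m'(z_m)=c_m^{-1}\rho_m^{-(m-2)}$ is comparable to $\delta^{-(m-2)/m}$, and $\rho_m\asymp\delta^{1/m}$. From these I will derive
\begin{equation}\label{eq:interval}
|z_m(u)-z_m(u')|\le C|u-u'|^{2/m},\qquad |\rho_m(u)-\rho_m(u')|\le C|u-u'|^{1/m},
\end{equation}
for $u,u'\in[0,1]$. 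The first follows by integrating $z_m'\lesssim\delta^{-1+2/m}$, which is a concave-power integrand. For the second I write $\rho_m=\sqrt{(1-z_m)(1+z_m)}$, use $|\sqrt a-\sqrt b|\le\sqrt{|a-b|}$, and note that $2/m\cdot\tfrac12=1/m$. On $[\eta,1-\eta]$ the mean value theorem gives the localized Lipschitz bounds $|z_m'|\lesssim\eta^{2/m-1}$ and $|\rho_m'|=|z_m z_m'/\rho_m|\lesssim\eta^{1/m-1}$. In both regimes the $\rho$ bound is the worse one, and it is the one that governs the final estimate.

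Next comes the induction. Writing $\Phi_m(u)=(\rho_m(u_m)\Phi_{m-1}(\bar u),z_m(u_m))$ with $\bar u=(u_1,\dots,u_{m-1})$, I split
\[
|\Phi_m(u)-\Phi_m(u')|\le|\rho_m(u_m)-\rho_m(u'_m)|+\rho_m(u'_m)\,|\Phi_{m-1}(\bar u)-\Phi_{m-1}(\bar u')|+|z_m(u_m)-z_m(u'_m)|,
\]
using $|\Phi_{m-1}|=1$. Because the cut metric reads $u_m$ through $[0,1)$ without wraparound, $|u_m-u'_m|\le\dc(u,u')$, so \eqref{eq:interval} applies directly. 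Here $\dc(\bar u,\bar u')\le\dc(u,u')\le1$. For part (a), the first and third terms are $\lesssim\dc^{1/m}$, and the middle term is $\lesssim\dc^{1/(m-1)}\le\dc^{1/m}$ by the inductive hypothesis and $\rho_m\le1$. The base case $\Phi_1$ is Lipschitz for $\dist_{\T}$.

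For part (b), the first and third terms are $\lesssim\eta^{1/m-1}\dc$. For the middle term, the inductive hypothesis gives $\eta^{1/(m-1)-1}\dc$, and I need $\eta^{1/(m-1)-1}\le\eta^{1/d-1}$. This holds because $\eta<1$ and the exponent $1/(m-1)-1\ge 1/d-1$ for $m\le d$. (The ball $\T^{m-1}_\eta$ condition is inherited.) Taking the worst exponent across $m\le d$ gives $\eta^{1/d-1}$.

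For sharpness when $d\ge2$, I take $u=(0,\dots,0,h)$ and $u'=(0,\dots,0,0)$, so the last coordinate varies only near the south pole. Then $|\Phi_d(u)-\Phi_d(u')|\ge\rho_d(h)\cdot|\text{first block}|\asymp h^{1/d}$, because $\Phi_{d-1}$ is a unit vector while $\Phi_d(u')$ is the pole $(0,\dots,0,-1)$. Since $\dc=h$, no exponent larger than $1/d$ is possible.

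The final sentence of the statement is immediate once $\dist_{\T^d}=\dc$. The main obstacle is the careful derivation of \eqref{eq:interval} uniformly near both endpoints, especially the $\rho_m$ estimate via the square-root inequality. This is the step I would check first, treating $m=2$ separately, where $z_2$ is linear.
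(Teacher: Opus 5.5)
Your proposal is correct and follows essentially the paper's proof. The shared steps are the endpoint asymptotics of $G_m$, which give interval H\"older bounds for $z_m,\rho_m$ and localized derivative bounds on $[\eta,1-\eta]$; induction on the dimension through the recursive formula, with $u_m$ read in $[0,1)$ via $\dc$; and the same near-pole sharpness example.

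The deviations are minor and both valid. You obtain the $\rho_m$ bound from the $z_m$ bound via $|\sqrt a-\sqrt b|\le\sqrt{|a-b|}$, where the paper integrates $|\rho_m'|\lesssim w(u)^{\frac1m-1}$. You prove (b) by induction on the dimension, where the paper telescopes coordinatewise. One small slip: near $z=+1$ the correct statement is $1-G_m(z)\asymp(1-z)^{m/2}$, not $G_m(z)\asymp(1-z)^{m/2}$.
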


\begin{proof}
\emph{Step 1: one-dimensional bounds.} Fix $2\le m\le d$. For $z\in[-1,0]$ we have $1-v\in[1,2]$ on $[-1,z]$, hence $G_m(z)\asymp_m\int_{-1}^{z}(1+v)^{\frac{m-2}2}\,dv=\frac2m(1+z)^{m/2}$, so that
\begin{equation}\label{eq:onedim}
1+z_m(u)\asymp_m u^{2/m}\quad(0\le u\le\tfrac12),\qquad 1-z_m(u)\asymp_m (1-u)^{2/m}\quad(\tfrac12\le u\le1)
\end{equation}
by symmetry ($G_m(-z)=1-G_m(z)$). Writing $w(u)=\min(u,1-u)$, \eqref{eq:onedim} gives $1-z_m(u)^2=(1-z_m)(1+z_m)\asymp_m w(u)^{2/m}$ and $\rho_m(u)\asymp_m w(u)^{1/m}$ on $(0,1)$. Differentiating $G_m(z_m(u))=u$,
\begin{align*}
z_m'(u)
&=\frac1{G_m'(z_m(u))}
 =\frac1{c_m\,(1-z_m(u)^2)^{\frac{m-2}2}}
 \le C_m\,w(u)^{\frac2m-1},\\
|\rho_m'(u)|
&=\frac{|z_m(u)|\,z_m'(u)}{\rho_m(u)}
 \le C_m\,w(u)^{\frac1m-1}.
\end{align*}
We use the elementary fact that if $h\in C^1((0,1))$ satisfies $|h'(u)|\le M\,w(u)^{\alpha-1}$ with $\alpha\in(0,1]$, then $[h]_{C^\alpha([0,1])}\le2M/\alpha$. Indeed, for $0\le u<u'\le\frac12$,
\[
|h(u')-h(u)|\le M\int_u^{u'}v^{\alpha-1}\,dv
=\frac M\alpha(u'^\alpha-u^\alpha)
\le\frac M\alpha(u'-u)^\alpha,
\]
by subadditivity of $t\mapsto t^\alpha$. The case $u,u'\ge\frac12$ is symmetric, and a general pair $u<\frac12<u'$ is split at $\frac12$, which costs the factor $2$ since $(u'-\frac12)^\alpha+(\frac12-u)^\alpha\le2(u'-u)^\alpha$. Hence, with respect to the standard metric of the interval $[0,1]$,
\begin{equation}\label{eq:interval}
[z_m]_{C^{2/m}([0,1])}\le C_m,\qquad [\rho_m]_{C^{1/m}([0,1])}\le C_m,
\end{equation}
and on $[\eta,1-\eta]$ we have $w(u)\ge\eta$, so that both exponents can be compared with $\frac1d-1$: since $\frac1m-1\le0$ and $\frac1m\ge\frac1d$, one has $w(u)^{\frac1m-1}\le\eta^{\frac1m-1}\le\eta^{\frac1d-1}$, and since $\frac2m-1\le0$ for $m\ge2$ while $\frac2m\ge\frac2d\ge\frac1d$, also $w(u)^{\frac2m-1}\le\eta^{\frac2m-1}\le\eta^{\frac1d-1}$. Hence
\begin{equation}\label{eq:localder}
\sup_{[\eta,1-\eta]}\bigl(|z_m'|+|\rho_m'|\bigr)\le C_d\,\eta^{\frac1d-1}\qquad(2\le m\le d).
\end{equation}
We stress that \eqref{eq:interval} is an interval bound and does not extend to the torus metric on the coordinate $u_m$: the function $\rho_m$ takes the common value $0$ at both endpoints and does extend, but $z_m(0)=-1\neq1=z_m(1)$, so $z_m$ has no continuous $1$-periodic extension. This is the source of the discontinuity of $\Phi_m$ across $\{u_m=0\}$ and the reason why the cut metric \eqref{eq:cutmetric} is used throughout.

\emph{Step 2: induction for \textup{(a)}.} The case $d=1$ is clear ($\Phi_1$ is smooth and $1$-periodic, and $\dc=\dist_{\T}$ there). Let $m\ge2$ and assume (a) for $m-1$. With $\bar u=(u_1,\dots,u_{m-1})$ and $\delta=\dc(u,u')\le1$,
\begin{align*}
|\Phi_m(u)-\Phi_m(u')|
&\le \rho_m(u_m)\bigl|\Phi_{m-1}(\bar u)-\Phi_{m-1}(\bar u')\bigr|\\
&\quad+|\rho_m(u_m)-\rho_m(u'_m)|+|z_m(u_m)-z_m(u'_m)|\\
&\le C_{m-1}\delta^{\frac1{m-1}}+C_m\delta^{\frac1m}+C_m\delta^{\frac2m}\\
&\le C'_m\,\delta^{\frac1m},
\end{align*}
because $\rho_m\le1$, $|\Phi_{m-1}|=1$, and $\delta\le1$ makes the smallest exponent $\frac1m$ dominant. Here the coordinate $u_m$ is measured, in accordance with \eqref{eq:cutmetric}, through its representative in $[0,1)$, so that the interval bounds \eqref{eq:interval} apply directly; no periodic extension of $z_m$ or $\rho_m$ is invoked.

\emph{Step 3: \textup{(b)}.} On the product region $\T^m_\eta$ each coordinate segment joining $u$ to $u'$ coordinatewise (the segment of $[\eta,1-\eta]$ for the coordinates $2\le\ell\le m$, the torus geodesic for $u_1$) stays in $\T^m_\eta$; telescoping in the coordinates and using \eqref{eq:localder} for $\ell=2,\dots,m$, the smooth dependence on $u_1$ ($|\partial_{u_1}\Phi_m|\le2\pi$), and $\rho_\ell\le1$, gives $|\Phi_m(u)-\Phi_m(u')|\le C_m\eta^{\frac1d-1}\dc(u,u')$.

\emph{Sharpness.} Taking $u'=(u_1,\dots,u_{d-1},0)$ and $u_d=\varepsilon$, the last coordinate moves by $|z_d(\varepsilon)-z_d(0)|\asymp\varepsilon^{2/d}$ while $\rho_d(\varepsilon)\asymp\varepsilon^{1/d}$ moves the first block by $\asymp\varepsilon^{1/d}$; hence exponent $1/d$ is attained.
\end{proof}

\begin{remark}[No torus-metric regularity; geometry of the cut]\label{rem:cut}
(a) For $d\ge2$ no estimate of the form $|\Phi_d(u)-\Phi_d(u')|\le\Xi(\dist_{\T^d}(u,u'))$ with $\Xi(0^+)=0$ can hold: with $u=(u_1,\dots,u_{d-1},\varepsilon)$ and $u'=(u_1,\dots,u_{d-1},1-\varepsilon)$ one has $\dist_{\T^d}(u,u')=2\varepsilon\to0$ while $\Phi_d(u)$ and $\Phi_d(u')$ converge to the south and north pole respectively, so $|\Phi_d(u)-\Phi_d(u')|\to2$. The two metrics coincide exactly on pairs whose coordinate geodesics avoid the cut, which is the regime in which Lemma~\ref{lem:reg} is used in Section~\ref{sec:main}.

(b) Consequently $F=\varphi\circ\Phi_d$ is in general discontinuous on $\T^d$: across $\{u_m=0\}$, $m\ge2$, it jumps between the values of $\varphi$ at the two one-sided limits of $\Phi_d$, computed in \textup{(c)} below. On $\{u_m=0\}$ the first $m$ Euclidean coordinates of $\Phi_d$ vanish, and the same holds for all one-sided limits along the cut; hence the closure of $\Phi_d(P_d)$, together with all such limits, is contained in the great subsphere
\[
\Sigma:=\{x\in\Sph^d : x_1=x_2=0\}
\]
(for $d=2$, $\Sigma$ is the pair of poles). Thus $F$ is continuous on $\T^d$ whenever $\varphi$ vanishes on a neighbourhood of $\Sigma$ --- and then, by Remark~\ref{rem:localization}, even H\"older on $\T^d$, so that the failure of a modulus of continuity in \textup{(a)} is a property of $\Phi_d$ that transfers to $F$ only for suitable $\varphi$. For a general $\varphi$, the localization device of Section~\ref{sec:main} makes Fourier analysis available for this monotone pullback; see Remark~\ref{rem:localization}.

(c) \emph{The jump, and when it is antipodal.} Unwinding Definition~\ref{def:map}, for $2\le m\le d$ the first $m+1$ Euclidean coordinates of $\Phi_d(u)$ are
\[
\Bigl(\prod_{m<\ell\le d}\rho_\ell(u_\ell)\Bigr)\,\Phi_m(u_1,\dots,u_m),
\]
while for $m<j\le d$ the $(j+1)$-st coordinate is
\[
\Bigl(\prod_{j<\ell\le d}\rho_\ell(u_\ell)\Bigr)\,z_j(u_j)
\]
(empty products being equal to $1$, so that the last coordinate is $z_d(u_d)$). Moreover $\Phi_m(u_1,\dots,u_m)\to(0,\dots,0,-1)$ as $u_m\to0^+$, resp.\ $\to(0,\dots,0,+1)$ as $u_m\to1^-$, uniformly in $u_1,\dots,u_{m-1}$. Hence the two one-sided limits of $\Phi_d$ along $\{u_m=0\}$ differ by
\begin{equation}\label{eq:jumpvector}
2\Bigl(\prod_{m<\ell\le d}\rho_\ell(u_\ell)\Bigr)e_{m+1},
\end{equation}
so they are reflections of one another in the hyperplane $\{x_{m+1}=0\}$. They are \emph{antipodal} precisely when $\prod_{m<\ell\le d}\rho_\ell(u_\ell)=1$, i.e.\ when every remaining latitude sits at the equator, $u_\ell=\frac12$ (equivalently $z_\ell(u_\ell)=0$); for $d=2$ the product is empty and the two limits are always the poles $S$ and $N$. Conversely the jump \eqref{eq:jumpvector} degenerates as the remaining latitudes approach their own poles. Thus the oscillation of $F$ across the cut is bounded below only away from that degeneracy, and it is there --- e.g.\ at $u_\ell=\frac12$ for $\ell>m$ --- that one tests the absence of a torus modulus of continuity for $F$, as in Section~\ref{sec:intro}.
\end{remark}

\section{Preliminary estimates}\label{sec:lemmas}

\begin{lemma}[Fourier decay of H\"older functions]\label{lem:fourier}
Let $F\in C^s(\T^d)$, $0<s\le1$. Then for every $k\in\Z^d\setminus\{0\}$,
\[
|\widehat F(k)|\le\tfrac12\,[F]_s\,\bigl(2|k|_\infty\bigr)^{-s}.
\]
\end{lemma}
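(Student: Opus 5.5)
The plan is the classical half-period shift trick. Fix $k\neq0$ and choose a coordinate $m$ with $|k_m|=|k|_\infty$. Set $h=\frac{1}{2k_m}e_m\in\T^d$, so that $e(k\cdot h)=e(1/2)=-1$. Substituting $u\mapsto u+h$ in the definition of $\widehat F(k)$ (Haar measure is translation invariant) gives
\[
\widehat F(k)=\int_{\T^d}F(u+h)\,e(-k\cdot(u+h))\,du=-\int_{\T^d}F(u+h)\,e(-k\cdot u)\,du .
\]
Averaging this with the original formula yields the representation
\[
\widehat F(k)=\tfrac12\int_{\T^d}\bigl(F(u)-F(u+h)\bigr)e(-k\cdot u)\,du .
\]

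The estimate then follows by taking absolute values. Since $|e(\cdot)|=1$ and the measure is a probability, we get $|\widehat F(k)|\le\tfrac12\sup_u|F(u)-F(u+h)|\le\tfrac12[F]_s\,\dist_{\T^d}(u,u+h)^s$. Here $\dist_{\T^d}(u,u+h)=\dist_{\T}(0,\tfrac1{2|k_m|})=\tfrac{1}{2|k|_\infty}$, because $\tfrac1{2|k_m|}\le\tfrac12$ and only the $m$-th coordinate moves in the max-metric. This gives exactly $\tfrac12[F]_s(2|k|_\infty)^{-s}$.

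The only point needing care is that the Hölder seminorm is taken with respect to the torus metric $\dist_{\T^d}$ (per the Notation convention), not the cut metric, so $F$ must be genuinely continuous on $\T^d$. The lemma is stated for $F\in C^s(\T^d)$, so this holds, and the argument needs no localization. I expect no real obstacle. The one bookkeeping step is checking the sign $e(-k\cdot h)=e(-k_m/(2k_m))=-1$ for either sign of $k_m$.
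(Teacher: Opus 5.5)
Your proposal is correct and is essentially the paper's proof: the half-period shift $h=\frac{1}{2k_m}e_m$ in a coordinate with $|k_m|=|k|_\infty$, averaging the two expressions for $\widehat F(k)$, and bounding by $\frac12[F]_s\,\dist_{\T^d}(0,h)^s=\frac12[F]_s(2|k|_\infty)^{-s}$. Your sign check and your observation that $\frac{1}{2|k_m|}\le\frac12$, so that the torus distance equals $\frac{1}{2|k|_\infty}$, are exactly the details the argument needs.
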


\begin{proof}
Pick $j$ with $|k_j|=|k|_\infty$ and let $h=\frac1{2k_j}e_j$. Translation invariance gives $\widehat F(k)=\int F(u+h)e(-k\cdot(u+h))\,du=-\int F(u+h)e(-k\cdot u)\,du$, whence $\widehat F(k)=\frac12\int\bigl(F(u)-F(u+h)\bigr)e(-k\cdot u)\,du$ and $|\widehat F(k)|\le\frac12[F]_s\dist_{\T^d}(0,h)^s=\frac12[F]_s(2|k|_\infty)^{-s}$.
\end{proof}

\begin{lemma}[de la Vall\'ee Poussin approximation]\label{lem:vdp}
For $K\in\N$ let $V_K=2\mathcal F_{2K+1}-\mathcal F_K$, where $\mathcal F_N$ denotes the Fej\'er mean of order $N$ on $\T^1$ (the letter $\sigma$ being reserved for the measures $\sigma_d$), and, for $1\le m\le d$, let $V_K^{[m]}$ denote the operator acting by $V_K$ in the $m$-th variable of a function on $\T^d$ and by the identity in the others, and $V_K^{\otimes d}=V_K^{[1]}\cdots V_K^{[d]}$ the product operator acting by $V_K$ in each of the $d$ variables. Then $P=V_K^{\otimes d}F$ is a trigonometric polynomial with spectrum in $\{|k|_\infty\le2K+1\}$ satisfying
\begin{align*}
\widehat P(k)&=v_K(k_1)\cdots v_K(k_d)\,\widehat F(k),\\
v_K(j)&\in[0,1],\qquad v_K(j)=1\ \text{for }|j|\le K+1,
\end{align*}
and there is $C_d<\infty$ such that for all $F\in C^s(\T^d)$, $0<s\le1$,
\[
\|F-V_K^{\otimes d}F\|_\infty\le C_d\,[F]_s\,K^{-s}.
\]
In particular $\widehat P(0)=\widehat F(0)$ and $|\widehat P(k)|\le|\widehat F(k)|$ for all $k$.
\end{lemma}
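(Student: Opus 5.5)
The plan is to prove everything first in one variable and then pass to $\T^d$ by telescoping over the coordinates.

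\emph{One-dimensional kernel and multipliers.} Write $\mathcal F_N f=f*\mathcal K_N$, where $\mathcal K_N(x)=\sum_{|j|\le N}\bigl(1-\frac{|j|}{N+1}\bigr)e(jx)$ is the Fej\'er kernel. It is nonnegative and has integral $1$. Then $V_K$ is convolution with $\mathcal V_K=2\mathcal K_{2K+1}-\mathcal K_K$, whose multiplier is
\[
v_K(j)=2\Bigl(1-\tfrac{|j|}{2K+2}\Bigr)_+-\Bigl(1-\tfrac{|j|}{K+1}\Bigr)_+ .
\]
I will check the claimed properties directly from this formula.
\begin{itemize}
\item For $|j|\le K+1$ the formula gives $2-\frac{|j|}{K+1}-1+\frac{|j|}{K+1}=1$.
\item For $K+1\le|j|\le 2K+2$ it equals $2-\frac{|j|}{K+1}\in[0,1]$.
\item For $|j|\ge 2K+2$ it vanishes.
\end{itemize}
So the spectrum of $V_K f$ lies in $|j|\le 2K+1$.

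\emph{Multivariate structure.} The operators $V_K^{[m]}$ act in different variables, so they commute. Their product is convolution on $\T^d$ with $\mathcal V_K(x_1)\cdots\mathcal V_K(x_d)$. This gives the product formula for $\widehat P(k)$, the spectral support in $\{|k|_\infty\le 2K+1\}$, $\widehat P(0)=\widehat F(0)$, and $|\widehat P(k)|\le|\widehat F(k)|$, since each $v_K(k_m)\in[0,1]$.

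\emph{Approximation estimate.} This is the only real content. First, each $V_K$ is bounded on $L^\infty$: $\|\mathcal V_K\|_{L^1}\le 2+1=3$. Next, $\mathcal V_K$ has integral $1$, so
\[
V_Kf(x)-f(x)=\int\bigl(f(x-y)-f(x)\bigr)\mathcal V_K(y)\,dy ,
\]
and hence $|V_Kf-f|\le [f]_s\int|y|^s|\mathcal V_K(y)|\,dy$, with $|y|$ the distance to $0$ on $\T$. Since $|\mathcal V_K|\le 2\mathcal K_{2K+1}+\mathcal K_K$, it suffices to show $\int|y|^s\mathcal K_N(y)\,dy\lesssim N^{-s}$. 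Use the bound $\mathcal K_N(y)\le\min\bigl(N+1,\ \frac{C}{(N+1)y^2}\bigr)$ and split at $|y|=1/N$:
\[
\int_{|y|\le 1/N} N|y|^s\,dy+\int_{1/N}^{1/2}N^{-1}y^{s-2}\,dy\lesssim N^{-s}
\]
for $s\le1$. The second piece uses $s-2<-1$.

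Now telescope in $\T^d$:
\[
F-V_K^{\otimes d}F=\sum_{m=1}^d V_K^{[1]}\cdots V_K^{[m-1]}\bigl(F-V_K^{[m]}F\bigr).
\]
Apply the one-dimensional bound in the variable $u_m$, with the others frozen. A restriction to one coordinate has H\"older seminorm at most $[F]_s$, because $\dist_{\T^d}$ is the max metric. Each prefactor has sup-norm operator bound at most $3$. Summing gives
\[
\|F-V_K^{\otimes d}F\|_\infty\le d\,3^{d-1}C[F]_sK^{-s}.
\]

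The main obstacle, mild as it is, is keeping the endpoint $s=1$ clean. There the pure Fej\'er mean only gives $\int|y|\mathcal K_N\sim N^{-1}\log N$. Using the Fej\'er kernel's $y^{-2}$ tail avoids this, because the weighted integral converges whenever $s<1$ and stays $O(N^{-1}\log N)$ at $s=1$. If needed, I would instead use that $V_K$ reproduces trigonometric polynomials of degree $\le K+1$: then $\|f-V_Kf\|_\infty\le 4E_{K+1}(f)$, and Jackson's theorem gives $E_K(f)\lesssim[f]_sK^{-s}$ for all $0<s\le1$ with no logarithm. This Jackson route is the one I would actually commit to for the one-dimensional step. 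It also feeds directly into the same coordinatewise telescoping.
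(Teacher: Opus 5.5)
Your committed argument (the multiplier computation, $\|V_K\|_{\infty\to\infty}\le3$, the bound $\|f-V_Kf\|_\infty\le4E_{K+1}(f)$ from reproduction of polynomials of degree $\le K+1$, Jackson's theorem, and coordinatewise telescoping with the factor $3^{m-1}$) is exactly the paper's proof, and it yields the same constant $C\,d\,3^{d-1}$. You are right to drop the direct kernel estimate: at $s=1$ its tail is $\int_{1/N}^{1/2}N^{-1}y^{-1}\,dy\asymp N^{-1}\log N$, so your remark ``$s-2<-1$'' fails there, and for $s<1$ it only gives a constant of order $(1-s)^{-1}$, whereas the lemma requires $C_d$ independent of $s\in(0,1]$, which the Jackson route delivers.
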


\begin{proof}
The Fej\'er multiplier is $(1-\frac{|j|}{N+1})_+$, so $v_K(j)=2(1-\frac{|j|}{2K+2})_+-(1-\frac{|j|}{K+1})_+$, which equals $1$ for $|j|\le K+1$, decreases linearly to $0$ on $K+1\le|j|\le2K+2$, and vanishes beyond; in particular $v_K\in[0,1]$. Since Fej\'er means are positive contractions on $C(\T^1)$, $\|V_K\|_{C\to C}\le3$; since $v_K(j)=1$ for $|j|\le K+1$, $V_K$ reproduces trigonometric polynomials of degree $\le K+1$, hence for $f\in C^s(\T^1)$
\[
\|f-V_Kf\|_\infty\le(1+\|V_K\|)\,E_{K+1}(f)\le4\,E_{K+1}(f)\le C\,[f]_s\,K^{-s}
\]
by Jackson's theorem \cite[Ch.~7]{DeVoreLorentz} (see also \cite[Ch.~III]{Zygmund}). For $d$ variables use the telescoping identity
\[
F-V_K^{\otimes d}F
=\sum_{m=1}^{d}V_K^{[1]}\cdots V_K^{[m-1]}\bigl(I-V_K^{[m]}\bigr)F,
\]
in the notation of the statement, the bracketed superscript indicating the variable acted upon. Each one-di\-men\-sion\-al section of $F$ in the $m$-th variable has $C^s$ seminorm at most $[F]_s$ (the metric on $\T^d$ dominates each coordinate metric), so $\|(I-V_K^{[m]})F\|_\infty\le C[F]_sK^{-s}$, and the outer operators contribute a factor $3^{m-1}$. Summing proves the claim with $C_d=C\,d\,3^{d-1}$.
\end{proof}

\begin{lemma}[Oscillatory integral]\label{lem:osc}
For every $a\neq0$, $t_0\in\R$ and $T>0$,
\[
\Bigl|\frac1T\int_{t_0}^{t_0+T}e(at)\,dt\Bigr|=\frac{|e(aT)-1|}{2\pi|a|T}\le\min\Bigl(1,\frac1{\pi|a|T}\Bigr).
\]
\end{lemma}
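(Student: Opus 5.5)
The plan is to compute the integral directly and then bound the resulting expression by elementary inequalities. Since $a\neq0$, the function $t\mapsto e(at)=e^{2\pi i a t}$ has antiderivative $e(at)/(2\pi i a)$. Hence
\[
\frac1T\int_{t_0}^{t_0+T}e(at)\,dt=\frac{e(a(t_0+T))-e(at_0)}{2\pi i a T}=e(at_0)\,\frac{e(aT)-1}{2\pi i aT}.
\]
Taking absolute values and using $|e(at_0)|=1$ and $|i|=1$ gives the stated identity
\[
\Bigl|\frac1T\int_{t_0}^{t_0+T}e(at)\,dt\Bigr|=\frac{|e(aT)-1|}{2\pi|a|T},
\]
which does not depend on $t_0$. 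This independence is the translation invariance that later feeds the uniformity in the window start $a$ of the main theorem.

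For the upper bound we use the two elementary estimates on the numerator and take the smaller. First, $|e(aT)-1|\le|e(aT)|+1=2$, so the expression is at most $\frac{2}{2\pi|a|T}=\frac1{\pi|a|T}$. Second, the triangle inequality applied directly to the average gives
\[
\Bigl|\frac1T\int_{t_0}^{t_0+T}e(at)\,dt\Bigr|\le\frac1T\int_{t_0}^{t_0+T}|e(at)|\,dt=1.
\]
Alternatively, the identity $|e(x)-1|=2|\sin\pi x|\le2\pi|x|$ yields the same bound $1$ from the closed form. Combining the two estimates gives the bound by $\min\bigl(1,\frac1{\pi|a|T}\bigr)$.

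There is no real obstacle here. The only points needing care are that $a\neq0$ is required for the antiderivative, and that the phase factor $e(at_0)$ has modulus one, so the bound is uniform in $t_0$.
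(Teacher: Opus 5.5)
Your argument is correct and is exactly the direct computation behind the paper's one-line proof: factoring out the unimodular phase $e(at_0)$ gives the $t_0$-independent identity, and the bounds $|e(aT)-1|\le2$ and $|e(at)|=1$ together yield the minimum. The only blemish is notational: in this lemma $a$ is the frequency, so the window start you allude to should be called $t_0$, as in Theorem~\ref{thm:main}.
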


\begin{proof}
Direct computation; the modulus is independent of $t_0$.
\end{proof}

\begin{lemma}[Occupation times of circle rotations]\label{lem:occupation}
Let $\omega_*\neq0$, $\theta(t)=\theta_*+\omega_*t$ on $\T^1$, and let $I\subset\T^1$ be an arc. Then for all $t_0\in\R$, $T>0$,
\[
\Bigl|\frac1T\,\Leb\{t\in[t_0,t_0+T] : \theta(t)\in I\}-|I|\Bigr|\le\frac1{|\omega_*|T}.
\]
\end{lemma}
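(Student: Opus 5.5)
The plan is to reduce to a single starting point and a single arc, and then count exactly how much time the linear flow spends in the arc. Lift to $\R$, so that $\theta(t)=\theta_*+\omega_*t$ is an affine function of $t$ with slope $\omega_*$. By the change of variables $x=\theta_*+\omega_*t$, the set of times $t\in[t_0,t_0+T]$ with $\theta(t)\in I$ corresponds to the set of $x$ in an interval $J\subset\R$ of length $L=|\omega_*|T$ whose projection to $\T^1$ lies in $I$. The occupation time is therefore $|\omega_*|^{-1}\Leb(J\cap\pi^{-1}(I))$, where $\pi\colon\R\to\T^1$ is the projection. The sign of $\omega_*$ plays no role, since reflecting $t$ only reverses the orientation of $J$.

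Next, write $\pi^{-1}(I)=\bigcup_{n\in\Z}(I_0+n)$ with $I_0\subset\R$ an interval of length $|I|\le1$. The function $g(x)=\Leb([x_0,x_0+L]\cap\pi^{-1}(I))-L|I|$ has to be bounded uniformly by $1$. Decompose $L=\lfloor L\rfloor+\{L\}$. Each full unit interval $[x_0+j,x_0+j+1]$ meets $\pi^{-1}(I)$ in measure exactly $|I|$, by periodicity, so those contributions cancel exactly. What remains is the fractional piece of length $\{L\}<1$. Its contribution $\Leb(\text{piece}\cap\pi^{-1}(I))-\{L\}|I|$ lies in $[-\{L\}|I|,\ \min(\{L\},|I|)]$, and both endpoints are bounded by $1$ in absolute value. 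Hence $|g|\le1$.

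Dividing by $|\omega_*|T$ then gives
\[
\Bigl|\frac1T\Leb\{t\in[t_0,t_0+T]:\theta(t)\in I\}-|I|\Bigr|=\frac{|g|}{|\omega_*|T}\le\frac1{|\omega_*|T},
\]
uniformly in $t_0$, in $\theta_*$ and in the arc $I$. This covers arcs that are open, closed or half-open, and arcs crossing $0$, since all of these only enter through $\pi^{-1}(I)$.

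There is no real obstacle in this argument. The one point needing care is the exact cancellation over full periods together with the remainder bound, in particular that the remainder is at most $1$ rather than $2$. The bound from the fractional piece confirms the constant $1$.
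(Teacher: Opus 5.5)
Your proof is correct and is essentially the paper's argument. The paper splits the window into $N$ full periods of length $p=1/|\omega_*|$, each contributing exactly $|I|p$, plus a remainder $r<p$ whose contribution to the error lies in $[-r|I|,\,r(1-|I|)]$; you do the same after rescaling time so that the period becomes $1$.
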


\begin{proof}
The motion is periodic with period $p=1/|\omega_*|$ and spends exactly $|I|p$ in $I$ during each full period. Write $T=Np+r$, $0\le r<p$: the occupation time lies in $[N|I|p,\,N|I|p+r]$, while $T|I|=N|I|p+r|I|$; the difference lies in $[-r|I|,\,r(1-|I|)]$, of absolute value at most $r<p$.
\end{proof}

\begin{lemma}[Syndetic returns to open sets]\label{lem:syndetic}
Let $\omega\in\R^d$ satisfy $k\cdot\omega\neq0$ for all $k\in\Z^d\setminus\{0\}$, and let $U\subset\T^d$ be open and nonempty. Then there is $L=L(U,\omega)<\infty$ such that for every $\theta_0\in\T^d$ and every $t_0\in\R$,
\[
\Leb\{t\in[t_0,t_0+L] : \theta_0+t\omega\in U\}>0.
\]
Equivalently: every time interval of length $L$ contains a set of visit times to $U$ of positive Lebesgue measure --- in particular a nonempty one, so that no interval of length $L$ is free of visits. (The proof gives the quantitative form: that set has measure at least $cL$, with $c=c(U,\omega)>0$.)
\end{lemma}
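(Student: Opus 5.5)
The plan is to replace the indicator of $U$ by a continuous bump, use unique ergodicity in its uniform form to get a positive lower bound on time averages over long windows, and read off the measure of the visit set. Since $U$ is open and nonempty, it contains a closed cube $Q$ of side $2r>0$ centred at some point $p$. Choose a continuous $\psi\colon\T^d\to[0,1]$ with $\psi=1$ on the concentric cube of side $r$ and $\supp\psi\subset Q\subset U$; one can take a product of one-dimensional tent functions. Then $\int_{\T^d}\psi\,d\lambda_d\ge r^d=:2c>0$ and $\psi\le\mathbf 1_U$.

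The key step is uniform convergence of time averages of $\psi$ along the flow, uniformly in both $\theta_0$ and $t_0$. By the translation remark, a window $[t_0,t_0+L]$ started at $\theta_0$ is the window $[0,L]$ started at $\theta_0+t_0\omega$. It therefore suffices to show
\[
\sup_{\theta\in\T^d}\Bigl|\frac1L\int_0^L\psi(\theta+t\omega)\,dt-\int\psi\Bigr|\longrightarrow0 .
\]
For this, I would approximate $\psi$ uniformly within $c/2$ by a trigonometric polynomial $P$, using Fej\'er means or Lemma~\ref{lem:vdp} applied to a Lipschitz $\psi$, with $\widehat P(0)=\widehat\psi(0)$. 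For each nonzero frequency $k$ in the finite spectrum of $P$, Lemma~\ref{lem:osc} gives
\[
\Bigl|\frac1L\int_0^Le(k\cdot(\theta+t\omega))\,dt\Bigr|\le\frac1{\pi|k\cdot\omega|L},
\]
which is finite because $k\cdot\omega\ne0$ and is independent of $\theta$. Summing over the finitely many such $k$ yields an $L=L(U,\omega)$ for which the oscillatory part is at most $c/2$.

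Combining, for every $\theta_0$ and $t_0$:
\[
\frac1L\Leb\{t\in[t_0,t_0+L]:\theta_0+t\omega\in U\}\ge\frac1L\int_{t_0}^{t_0+L}\psi(\theta_0+t\omega)\,dt\ge2c-\tfrac c2-\tfrac c2=c>0 .
\]
This gives both the positivity claim and the quantitative form with constant $c(U,\omega)$. The only delicate point is the uniformity in the phase. It comes for free because the error bound for each Fourier mode has modulus independent of the phase and the window start; there is no genuine obstacle beyond choosing the approximation with only finitely many frequencies, so that no Diophantine hypothesis is needed.
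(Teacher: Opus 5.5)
Your proposal is correct and follows the paper's proof: a continuous minorant $0\le\psi\le\mathbf 1_U$ with $\int\psi=2c>0$, convergence of its time averages uniformly in the phase, the translation identity to handle arbitrary windows, and then reading off a visit set of measure at least $cL$. The only difference is that you prove the uniform convergence directly, by trigonometric approximation together with Lemma~\ref{lem:osc} over finitely many nonzero frequencies, whereas the paper simply cites unique ergodicity of the Kronecker flow.
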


\begin{proof}
Pick $\psi\in C(\T^d)$ with $0\le\psi\le\mathbf 1_U$ and $\int_{\T^d}\psi\,d\lambda_d=:2c>0$. By unique ergodicity of the flow \eqref{eq:kron}, $\frac1L\int_0^L\psi(\theta+t\omega)\,dt\to\int\psi\,d\lambda_d$ as $L\to\infty$, uniformly in the initial phase $\theta\in\T^d$ \cite[Ch.~1]{CFS}; uniformity in the window is then automatic, because the window $[t_0,t_0+L]$ for the initial phase $\theta_0$ coincides with the window $[0,L]$ for the initial phase $\theta_0+t_0\omega$. Choose $L$ so large that $\frac1L\int_{t_0}^{t_0+L}\psi(\theta_0+t\omega)\,dt\ge c$ for all $\theta_0\in\T^d$, $t_0\in\R$. Since $\psi\le\mathbf 1_U$, the set of times $t\in[t_0,t_0+L]$ with $\theta_0+t\omega\in U$ has Lebesgue measure at least $cL>0$.
\end{proof}

\section{The quantitative equidistribution theorem}\label{sec:main}

For $\varphi\in C(\Sph^d)$, $\theta_0\in\T^d$, $t_0\in\R$ and $T>0$ define the window average
\[
A_{t_0,T}(\varphi):=\frac1T\int_{t_0}^{t_0+T}\varphi\bigl(\Phi_d(\theta_0+t\omega)\bigr)\,dt.
\]

\begin{theorem}[Quantitative window-uniform equidistribution]\label{thm:main}
Let $d\ge2$, $\gamma\in(0,1]$, $\tau\ge d-1$, $\omega\in\mathcal D(\gamma,\tau)$, and $0<s\le1$. Set
\[
\vartheta=\frac{s}{(1+s)(\tau+d)}.
\]
There is a constant $C=C(d,\gamma,\tau,s)<\infty$ such that for every $\varphi\in C^s(\Sph^d)$, every $\theta_0\in\T^d$, every $t_0\in\R$ and every $T>0$,
\[
\bigl|A_{t_0,T}(\varphi)-\langle\varphi\rangle\bigr|\le C\,\|\varphi\|_{C^s(\Sph^d)}\,T^{-\vartheta}.
\]
\end{theorem}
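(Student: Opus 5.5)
The plan is to localize the pullback $F=\varphi\circ\Phi_d$ away from the polar cut at a scale $\eta$, Fourier-analyse the localized piece, and then choose $\eta$ together with a frequency cutoff $K$ as powers of $T$. Throughout, write $\|\varphi\|=\|\varphi\|_{C^s(\Sph^d)}$. We may assume $T\ge T_0(d,\gamma,\tau)$, since for bounded $T$ the claim is trivial from $|A_{t_0,T}(\varphi)-\langle\varphi\rangle|\le2\|\varphi\|_\infty$.

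\emph{Step 1: the cutoff.} Fix $\eta\in(0,\frac18)$ and a cutoff $\chi_\eta(u)=\prod_{m=2}^d\chi(u_m)$ on $\T^d$ with the following properties:
\begin{itemize}
\item $\chi\colon\T\to[0,1]$ is $\eta^{-1}$-Lipschitz;
\item $\chi=1$ on $[2\eta,1-2\eta]$;
\item $\chi=0$ on the torus $\eta$-neighbourhood of $0$.
\end{itemize}
Split $F=\chi_\eta F+(1-\chi_\eta)F$. The function $1-\chi_\eta$ is supported where some coordinate $u_m$, $m\ge2$, lies in an arc of length $4\eta$ around $0$. We bound the two contributions of the remainder separately.
\begin{itemize}
\item \emph{Time average.} Apply Lemma~\ref{lem:occupation} to each coordinate rotation $t\mapsto\theta_{0,m}+t\omega_m$, recalling $|\omega_m|\ge\gamma$. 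This bounds the time average of $(1-\chi_\eta)F$ by $\|\varphi\|_\infty\,(d-1)(4\eta+1/(\gamma T))$.
\item \emph{Space average.} By Lemma~\ref{lem:push}, $\langle\varphi\rangle=\int_{\T^d}F$, so $|\langle\varphi\rangle-\int\chi_\eta F|\le 4(d-1)\eta\|\varphi\|_\infty$.
\end{itemize}
It therefore suffices to compare the window average of $G=\chi_\eta F$ with $\widehat G(0)$.

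\emph{Step 2: torus H\"older bound for $G$ (the main point).} I aim to prove
\[
[G]_{C^s(\T^d)}\le C_d\,\|\varphi\|\,\eta^{-s},\qquad \|G\|_\infty\le\|\varphi\|_\infty .
\]
Take $u,u'$ with $\delta=\dist_{\T^d}(u,u')$. There are three cases.
\begin{itemize}
\item If $\delta\ge\eta$, use the trivial bound $2\|\varphi\|_\infty\le2\|\varphi\|_\infty(\delta/\eta)^s$.
\item If $\delta<\eta$ and both points lie in $\T^d_\eta$, then no coordinate geodesic between them can cross the cut, so $\dc(u,u')=\delta$. Lemma~\ref{lem:reg}(b) gives $|\Phi_d(u)-\Phi_d(u')|\le C\eta^{1/d-1}\delta$. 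Hence
\[
|G(u)-G(u')|\le[\varphi]_s C\eta^{(1/d-1)s}\delta^s+\|\varphi\|_\infty\eta^{-1}\delta\lesssim\|\varphi\|\eta^{-s}\delta^s,
\]
using $\eta^{(1/d-1)s}\le\eta^{-s}$ and $\eta^{-1}\delta\le(\delta/\eta)^s$.
\item If $\delta<\eta$ and one of the points lies outside $\T^d_\eta$, then $\chi_\eta$ vanishes there. Then $|G(u)-G(u')|\le\|\varphi\|_\infty|\chi_\eta(u)-\chi_\eta(u')|\le\|\varphi\|_\infty(\delta/\eta)^s$.
\end{itemize}
This step is where the singular geometry of $\Phi_d$ is absorbed. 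The delicate point is to check that the cutoff really decouples the cut from the Lipschitz region, so that the torus metric and the cut metric agree wherever Lemma~\ref{lem:reg}(b) is used.

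\emph{Step 3: Fourier argument and optimization.} Let $P=V_K^{\otimes d}G$. By Lemma~\ref{lem:vdp}, $\|G-P\|_\infty\lesssim\|\varphi\|\eta^{-s}K^{-s}$ and $\widehat P(0)=\widehat G(0)$. The window average of $P-\widehat P(0)$ is $\sum_{0<|k|_\infty\le2K+1}\widehat P(k)e(k\cdot\theta_0)\frac1T\int e(tk\cdot\omega)\,dt$. Combine three bounds:
\begin{itemize}
\item Lemma~\ref{lem:osc} bounds each time integral by $1/(\pi|k\cdot\omega|T)$;
\item the Diophantine condition gives $|k\cdot\omega|\ge\gamma|k|_\infty^{-\tau}$;
\item Lemma~\ref{lem:fourier} together with Step 2 gives $|\widehat P(k)|\le|\widehat G(k)|\lesssim\|\varphi\|\eta^{-s}|k|_\infty^{-s}$.
\end{itemize}
Summing over shells $|k|_\infty=n$, of size $\asymp n^{d-1}$, bounds this term by $C\|\varphi\|\eta^{-s}K^{\tau+d-s}/(\gamma T)$. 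Collecting all terms,
\[
|A_{t_0,T}(\varphi)-\langle\varphi\rangle|\lesssim\|\varphi\|\Bigl(\eta+\frac1T+\eta^{-s}K^{-s}+\eta^{-s}\frac{K^{\tau+d-s}}{T}\Bigr).
\]
Choose $K\asymp T^{1/(\tau+d)}$, which makes the last two terms $\eta^{-s}T^{-s/(\tau+d)}$. Then balance $\eta=\eta^{-s}T^{-s/(\tau+d)}$, i.e. $\eta=T^{-s/((1+s)(\tau+d))}=T^{-\vartheta}$. Every term is then $O(T^{-\vartheta})$ because $\vartheta<1$. All constants depend only on $(d,\gamma,\tau,s)$, and nothing depends on $\theta_0$ or $t_0$, which gives the uniformity claimed.
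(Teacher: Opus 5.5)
Your proposal is correct and is essentially the paper's proof: the same product cutoff at scale $\eta$, occupation-time and measure bounds for the polar collar, the torus-H\"older estimate $[G]_s\lesssim\|\varphi\|_{C^s}\eta^{-s}$ via Lemma~\ref{lem:reg}(b), and de la Vall\'ee Poussin approximation combined with the Diophantine small-divisor sum, optimized at $K\asymp T^{1/(\tau+d)}$ and $\eta\asymp T^{-\vartheta}$. The differences are cosmetic. First, you split the H\"older estimate by $\delta\ge\eta$ versus $\delta<\eta$, whereas the paper splits by whether some latitude geodesic wraps across the cut; these are equivalent, since wrapping forces $\delta\ge\eta$ once one point lies in $\T^d_\eta$. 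Second, your threshold $T_0$ must also depend on $s$ so that $\eta=T^{-\vartheta}<\frac18$; the paper instead simply takes $\eta=\frac18T^{-\vartheta}$.
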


\begin{proof}
Throughout, $C$ denotes a constant depending only on $(d,\gamma,\tau,s)$, and we may assume $T\ge T_0(d,\gamma,\tau,s)$ large, the complementary range being absorbed by enlarging $C$ (the left-hand side never exceeds $2\|\varphi\|_\infty$). Put $F=\varphi\circ\Phi_d\colon\T^d\to\R$, so that $\int F\,d\lambda_d=\langle\varphi\rangle$ by Lemma~\ref{lem:push}, and fix two parameters
\[
\eta\in(0,\tfrac18],\qquad K\in\N,
\]
to be chosen at the end.

\emph{Localization.} Let $\xi_\eta\colon\T^1\to[0,1]$ be $1$-periodic, with $\xi_\eta=0$ on $[0,\eta]\cup[1-\eta,1]$, $\xi_\eta=1$ on $[2\eta,1-2\eta]$, linear in between, so that $\xi_\eta$ is Lipschitz with constant $\le2/\eta$ (note that $\xi_\eta(0)=\xi_\eta(1)=0$, so $\xi_\eta$ is indeed continuous, and $(2/\eta)$-Lipschitz, with respect to $\dist_{\T}$). Define
\[
X(u)=\prod_{m=2}^{d}\xi_\eta(u_m),\qquad G=F\,X.
\]
Then $X$ is supported in $\T^d_\eta$ and $X=1$ off the set $W_\eta=\{u : \exists\,m\ge2,\ u_m\in[0,2\eta)\cup(1-2\eta,1]\}$. Observe also that $G\in C(\T^d)$: the factor $X$ vanishes on a neighbourhood of the polar cut $P_d$, which by Remark~\ref{rem:cut} carries all the discontinuities of $F$, and $F$ is continuous elsewhere. Decompose
\begin{equation}\label{eq:decomp}
\begin{aligned}
A_{t_0,T}(\varphi)-\langle\varphi\rangle
&=\underbrace{\Bigl[\frac1T\int_{t_0}^{t_0+T}G(\theta(t))\,dt-\widehat G(0)\Bigr]}_{E_1}\\
&\quad+\underbrace{\frac1T\int_{t_0}^{t_0+T}\bigl(F(1-X)\bigr)(\theta(t))\,dt}_{E_2}
-\underbrace{\int_{\T^d}F(1-X)\,d\lambda_d}_{E_3}.
\end{aligned}
\end{equation}

\emph{Bounds for $E_2$ and $E_3$.} Since $0\le1-X\le1$ vanishes off $W_\eta$ and $\lambda_d(W_\eta)\le4(d-1)\eta$,
\[
|E_3|\le\|\varphi\|_\infty\,4(d-1)\eta.
\]
For $E_2$, the integrand is nonzero only when some coordinate $\theta_m(t)=\theta_{0,m}+\omega_mt$, $m\ge2$, lies in an arc of length $4\eta$. Each such coordinate is a circle rotation of speed $|\omega_m|\ge\gamma$ (take $k=e_m$ in Definition~\ref{def:dioph}), so Lemma~\ref{lem:occupation} gives, for each $m$, a time fraction at most $4\eta+\frac1{\gamma T}$. Hence
\[
|E_2|\le\|\varphi\|_\infty\,(d-1)\Bigl(4\eta+\frac1{\gamma T}\Bigr).
\]

\emph{H\"older seminorm of the localized function.} We claim that, with respect to the torus metric $\dist_{\T^d}$,
\begin{equation}\label{eq:Gseminorm}
[G]_s\le C\,\|\varphi\|_{C^s}\,\eta^{-s}.
\end{equation}
Let $u,u'\in\T^d$ and $\delta=\dist_{\T^d}(u,u')$; read all coordinates through their representatives in $[0,1)$. If $G(u)=G(u')=0$ there is nothing to prove. Otherwise, exchanging $u$ and $u'$ if necessary, we may assume $G(u)\neq0$; then $u_m\in(\eta,1-\eta)$ for all $2\le m\le d$. We distinguish two cases, according to whether the torus distance is realized across the cut or not.

\emph{Case 1: some coordinate wraps.} Suppose that for some $m\ge2$ the geodesic realizing $\dist_{\T}(u_m,u'_m)$ crosses the cut $\{u_m=0\}$. Then
\[
\delta\ge\dist_{\T}(u_m,u'_m)=\dist_{\T}(u_m,0)+\dist_{\T}(0,u'_m)\ge\min(u_m,1-u_m)\ge\eta,
\]
and the trivial bound gives
\[
|G(u)-G(u')|\le2\|\varphi\|_\infty\le2\|\varphi\|_\infty\,\eta^{-s}\delta^s.
\]

\emph{Case 2: no coordinate of index $m\ge2$ wraps.} Then $\delta=\dc(u,u')$, so Lemma~\ref{lem:reg} applies with $\delta$ on the right-hand side. If $u'\notin\T^d_\eta$, then $X(u')=0$ and, since $|\xi_\eta(a)-\xi_\eta(b)|\le\min(1,\frac2\eta\dist_{\T}(a,b))\le(\frac2\eta)^s\dist_{\T}(a,b)^s$ and the factors of $X$ lie in $[0,1]$, telescoping gives $|X(u)-X(u')|\le(d-1)(\frac2\eta)^s\delta^s$, whence $|G(u)-G(u')|=|F(u)|\,|X(u)-X(u')|\le\|\varphi\|_\infty(d-1)(\frac2\eta)^s\delta^s$. If instead $u'\in\T^d_\eta$, then by Lemma~\ref{lem:reg}(b)
\begin{align*}
|F(u)-F(u')|
&\le[\varphi]_s\,\bigl|\Phi_d(u)-\Phi_d(u')\bigr|^s\\
&\le[\varphi]_s\,C_d^s\,\eta^{s(\frac1d-1)}\,\delta^s
\le C\,[\varphi]_s\,\eta^{-s}\,\delta^s.
\end{align*}
Combining this with
\[
|G(u)-G(u')|\le|X(u)|\,|F(u)-F(u')|+|F(u')|\,|X(u)-X(u')|
\]
proves \eqref{eq:Gseminorm}.

It is worth recording where the loss $\eta^{-s}$ in \eqref{eq:Gseminorm} originates, since this determines what can and cannot be improved. It is carried by the \emph{scale} of the cutoff --- by the Lipschitz constant $2/\eta$ of $\xi_\eta$ and by the trivial bound of Case~1 --- and not by the geometric exponent supplied by Lemma~\ref{lem:reg}(b), whose contribution is only $\eta^{s(\frac1d-1)}$, strictly smaller than $\eta^{-s}$ because $\eta<1$ and $\frac1d>0$. Of course the cutoff is present only because $\Phi_d$ has a cut, so the two are not independent; the point is the operational one that sharpening the geometric step alone cannot improve \eqref{eq:Gseminorm}, nor the exponent $\vartheta$ obtained below. What a genuinely different argument would have to supply is discussed in Remark~\ref{rem:singlescale}.

\emph{Bound for $E_1$.} Let $P=V_K^{\otimes d}G$ as in Lemma~\ref{lem:vdp} (legitimate since $G\in C^s(\T^d)$ by \eqref{eq:Gseminorm}); then $\widehat P(0)=\widehat G(0)$ and
\begin{align*}
E_1
&=\frac1T\int_{t_0}^{t_0+T}\bigl(G-P\bigr)(\theta(t))\,dt\\
&\quad+\sum_{0<|k|_\infty\le2K+1}\widehat P(k)\,e(k\cdot\theta_0)
\,\frac1T\int_{t_0}^{t_0+T}e\bigl(k\cdot\omega\,t\bigr)\,dt.
\end{align*}
The first term is at most $\|G-P\|_\infty\le C[G]_sK^{-s}$. For the second, combine Lemmas~\ref{lem:osc}, \ref{lem:fourier} and \ref{lem:vdp} with the Diophantine bound $|k\cdot\omega|\ge\gamma|k|_\infty^{-\tau}$:
\begin{align*}
\sum_{0<|k|_\infty\le2K+1}\frac{|\widehat G(k)|}{\pi T\,|k\cdot\omega|}
&\le\frac{[G]_s}{2^{1+s}\pi\gamma T}
  \sum_{j=1}^{2K+1}\ \sum_{|k|_\infty=j}j^{\tau-s}\\
&\le\frac{C\,[G]_s}{\gamma T}\sum_{j=1}^{2K+1}j^{\tau+d-1-s}\\
&\le\frac{C\,[G]_s\,K^{\tau+d-s}}{\gamma T},
\end{align*}
where we used $\#\{k:|k|_\infty=j\}\le2d(2j+1)^{d-1}\le C_d\,j^{d-1}$ and $\tau+d-1-s\ge0$. Hence, by \eqref{eq:Gseminorm},
\[
|E_1|\le C\,\|\varphi\|_{C^s}\,\eta^{-s}\Bigl(K^{-s}+\frac{K^{\tau+d-s}}{\gamma T}\Bigr).
\]

\emph{Conclusion.} Collecting the three bounds,
\[
\bigl|A_{t_0,T}(\varphi)-\langle\varphi\rangle\bigr|\le C\,\|\varphi\|_{C^s}\Bigl[\eta^{-s}K^{-s}+\eta^{-s}\frac{K^{\tau+d-s}}{T}+\eta+\frac1T\Bigr].
\]
Choose $K=\lceil T^{1/(\tau+d)}\rceil$, which balances the two $K$-terms at $\eta^{-s}T^{-s/(\tau+d)}$, and then $\eta=\frac18T^{-s/((1+s)(\tau+d))}$, which balances $\eta^{-s}T^{-s/(\tau+d)}$ against $\eta$. All four terms are then $\le CT^{-\vartheta}$ (the term $1/T$ is of lower order since $\vartheta<1$), which is the assertion.
\end{proof}

\begin{remark}[Window uniformity is phase uniformity]\label{rem:window}
The uniformity in $t_0$ asserted in Theorem~\ref{thm:main} is not an independent statement. Since the flow is a translation,
\[
\frac1T\int_{t_0}^{t_0+T}\varphi\bigl(\Phi_d(\theta_0+t\omega)\bigr)\,dt
=\frac1T\int_{0}^{T}\varphi\bigl(\Phi_d(\theta_0'+t\omega)\bigr)\,dt,
\qquad \theta_0'=\theta_0+t_0\omega,
\]
so a bound at $t_0=0$ that is uniform over initial phases is automatically uniform over windows, and conversely. What the proof delivers is the uniformity in $\theta_0$: in the bound for $E_1$ the initial phase enters only through the unimodular factors $e(k\cdot\theta_0)$, and Lemmas~\ref{lem:osc} and~\ref{lem:occupation} are stated with a window position that does not appear in the estimate. We nevertheless carry $t_0$ explicitly, because Theorem~\ref{thm:spiralblock} applies the estimate on consecutive blocks $I_j=[jL,(j+1)L)$ with one fixed $\theta_0$, and it is convenient not to have to relabel the phase at each block. The same observation is used, in the same way, in the proof of Lemma~\ref{lem:syndetic}.
\end{remark}

\begin{corollary}[Almost every frequency]\label{cor:ae}
Fix $d\ge2$ and $0<s\le1$. For almost every $\omega\in\R^d$ the following holds: for every $\varepsilon>0$ there is $C_{\omega,\varepsilon}$ such that for all $\varphi\in C^s(\Sph^d)$, all $\theta_0$, $t_0$ and all $T\ge1$,
\[
\bigl|A_{t_0,T}(\varphi)-\langle\varphi\rangle\bigr|\le C_{\omega,\varepsilon}\,\|\varphi\|_{C^s}\,T^{-\frac{s}{(1+s)(2d-1)}+\varepsilon}.
\]
\end{corollary}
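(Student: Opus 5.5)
The plan is to combine Theorem~\ref{thm:main} with the metric statement recorded after Definition~\ref{def:dioph}: for every $\tau>d-1$, almost every $\omega\in\R^d$ lies in $\mathcal D(\gamma,\tau)$ for some $\gamma>0$. The exponent $\vartheta=s/((1+s)(\tau+d))$ is continuous and decreasing in $\tau$. At the limiting value $\tau=d-1$ it equals
\[
\vartheta_*=\frac{s}{(1+s)(2d-1)},
\]
which is exactly the exponent in the corollary. So it suffices to take $\tau$ slightly above $d-1$.

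Concretely, I would fix a sequence $\tau_n\downarrow d-1$, say $\tau_n=d-1+1/n$. For each $n$, let $\mathcal N_n$ be the null set of $\omega$ that belong to no $\mathcal D(\gamma,\tau_n)$ with $\gamma>0$. Since $\mathcal D(\gamma,\tau)$ grows as $\gamma$ decreases, one may restrict to $\gamma=1/j$, $j\in\N$. The set $\mathcal N=\bigcup_n\mathcal N_n$ is a countable union of null sets, hence null.

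Now fix $\omega\notin\mathcal N$ and $\varepsilon>0$. Choose $n$ with
\[
\vartheta_*-\frac{s}{(1+s)(\tau_n+d)}\le\varepsilon,
\]
which is possible by continuity of $\tau\mapsto s/((1+s)(\tau+d))$ at $\tau=d-1$. Pick $\gamma=\gamma(\omega,n)\in(0,1]$ with $\omega\in\mathcal D(\gamma,\tau_n)$; we may assume $\gamma\le1$ by shrinking it. Theorem~\ref{thm:main} applies because $\tau_n\ge d-1$. It gives, uniformly in $\varphi$, $\theta_0$ and $t_0$, the bound $C(d,\gamma,\tau_n,s)\|\varphi\|_{C^s}T^{-\vartheta_n}$. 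For $T\ge1$ we have $T^{-\vartheta_n}\le T^{-\vartheta_*+\varepsilon}$, so setting $C_{\omega,\varepsilon}=C(d,\gamma,\tau_n,s)$ gives the claim. Because the null set $\mathcal N$ is chosen before $\varepsilon$, the ``almost every $\omega$'' holds simultaneously for all $\varepsilon$.

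The only point needing care is the quantifier order: the exceptional set must be independent of $\varepsilon$. The countable union over $\tau_n$ handles this. The Borel--Cantelli fact is taken as given in the text, with its locally-measure bound summed over a countable cover of $\R^d$ by bounded sets and letting $\gamma\to0$. I therefore do not expect a genuine obstacle; the argument is bookkeeping on top of Theorem~\ref{thm:main}.
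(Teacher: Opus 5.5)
Your proposal is correct and follows the paper's proof essentially verbatim: take $\tau_n=d-1+1/n$, intersect the countably many full-measure Diophantine sets so the exceptional set does not depend on $\varepsilon$, choose $n$ so that $\frac{s}{(1+s)(\tau_n+d)}\ge\frac{s}{(1+s)(2d-1)}-\varepsilon$, and apply Theorem~\ref{thm:main} with $\gamma=\gamma_n(\omega)$, using $T\ge1$ to trade the larger decay exponent for the stated one. The extra bookkeeping you add (restricting to $\gamma=1/j$ for measurability, shrinking $\gamma$ into $(0,1]$) is harmless and consistent with the paper's argument.
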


\begin{proof}
For each $n\ge1$, the Borel--Cantelli statement following Definition~\ref{def:dioph} gives a full-measure set of frequencies $\omega$ for which $\omega\in\mathcal D(\gamma_n,d-1+1/n)$ for some $\gamma_n=\gamma_n(\omega)>0$. Intersecting these sets over $n$ still gives a full-measure set. Fix $\omega$ in that intersection and $\varepsilon>0$. Choose $n$ so large that
\[
\frac{s}{(1+s)(2d-1+1/n)}\ge
\frac{s}{(1+s)(2d-1)}-\varepsilon.
\]
Applying Theorem~\ref{thm:main} with $\tau=d-1+1/n$ and $\gamma=\gamma_n$ yields the claimed estimate for $T\ge1$, since increasing the decay exponent only decreases the right-hand side on that range.
\end{proof}

\begin{corollary}[The continuous variant]\label{cor:tilde}
Under the hypotheses of Theorem~\ref{thm:main}, the same estimate holds for the continuous measure-preserving map $\widetilde\Phi_d$ of Remark~\ref{rem:continuousparam}: for every $\varphi\in C^s(\Sph^d)$, every $\theta_0\in\T^d$, every $t_0\in\R$ and every $T>0$,
\[
\Bigl|\frac1T\int_{t_0}^{t_0+T}\varphi\bigl(\widetilde\Phi_d(\theta_0+t\omega)\bigr)\,dt-\langle\varphi\rangle\Bigr|
\le C\,\|\varphi\|_{C^s(\Sph^d)}\,T^{-\vartheta},
\]
with the same $C=C(d,\gamma,\tau,s)$ up to an absolute factor. This exponent is at least as large as the exponent $\frac{s}{d(\tau+d)}$ that the global $\frac1d$-H\"older regularity of $\widetilde\Phi_d$ yields by itself, since $1+s\le2\le d$; the improvement is strict except at the endpoint $d=2$, $s=1$, where the two exponents coincide.
\end{corollary}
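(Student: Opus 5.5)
The plan is to rerun the proof of Theorem~\ref{thm:main} verbatim with $\widetilde F=\varphi\circ\widetilde\Phi_d$ in place of $F$. Only two structural inputs of that proof depend on the parametrization. The first is the push-forward identity $\int\widetilde F\,d\lambda_d=\langle\varphi\rangle$, which Remark~\ref{rem:continuousparam} supplies. The second is the localized H\"older bound \eqref{eq:Gseminorm} for $\widetilde G=\widetilde F X$. Everything else carries over without change: the decomposition \eqref{eq:decomp}, the occupation bounds for $E_2,E_3$ (which involve only $\|\varphi\|_\infty$, Lemma~\ref{lem:occupation} and the cutoff $X$), the de la Vall\'ee Poussin and Fourier estimates for $E_1$, and the final choice of $K,\eta$. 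So once $[\widetilde G]_s\le C\|\varphi\|_{C^s}\eta^{-s}$ is established, the same optimization gives $T^{-\vartheta}$.

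The degeneracies of $\widetilde\Phi_d$ sit where $q(u_m)\in\{0,1\}$, that is at $u_m\in\{0,\frac12\}$, not only at $u_m=0$. I would therefore replace the cutoff by $\widetilde X(u)=\prod_{m\ge2}\xi_\eta(q(u_m))$, or equivalently by a product of $1$-periodic Lipschitz bumps vanishing within $\eta$ of $0$ and of $\frac12$. This cutoff is still $O(1/\eta)$-Lipschitz in each coordinate. The excluded set has measure $O(d\eta)$, being a union of $2(d-1)$ arcs of length $O(\eta)$, and Lemma~\ref{lem:occupation} applied to each arc controls $E_2$ exactly as before, at the cost of an absolute factor.

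Since $\widetilde\Phi_d$ is continuous, there is no cut and Case~1 of the original proof disappears. Take $u,u'$ with $\widetilde G(u)\ne0$ and $\delta=\dist_{\T^d}(u,u')$.
\begin{enumerate}[label=\textup{(\roman*)}, leftmargin=2.2em]
\item If $\delta\ge\eta/2$, use the trivial bound $2\|\varphi\|_\infty\le C\|\varphi\|_\infty\eta^{-s}\delta^s$.
\item If $u'$ lies outside the support, use the Lipschitz bound for $\widetilde X$ exactly as in Case~2.
\item Otherwise $\delta<\eta/2$ and both points lie in the good region. Then each coordinate geodesic joining $u_m$ to $u'_m$ stays inside a single monotonicity interval of $q$ and keeps $q(u_m)\in[\eta',1-\eta']$ with $\eta'\asymp\eta$.
\end{enumerate}
In case (iii) the chain rule with $|q'|=2$ and \eqref{eq:localder} gives $|\widetilde\Phi_d(u)-\widetilde\Phi_d(u')|\le C_d\eta^{\frac1d-1}\delta$, the analogue of Lemma~\ref{lem:reg}(b). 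Telescoping as in Step~3 of that lemma then yields $|\widetilde F(u)-\widetilde F(u')|\le C[\varphi]_s\eta^{-s}\delta^s$, and \eqref{eq:Gseminorm} follows for $\widetilde G$.

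The main obstacle is the bookkeeping in case (iii). One must check that the segment between $u_m$ and $u'_m$ does not cross the fold point $\frac12$ or the point $0$, so that the localized Lipschitz estimate applies on a single branch of $q$. The threshold $\delta<\eta/2$ is chosen to guarantee this. The comparison sentence in the statement is then arithmetic. The $\frac1d$-H\"older route gives $[\widetilde F]_s\le C[\varphi]_s$ with torus exponent $s/d$, and hence via Lemmas~\ref{lem:fourier}--\ref{lem:vdp} the exponent $\frac{s}{d(\tau+d)}$ (optimizing $K^{-s/d}+K^{\tau+d-s/d}/T$). Since $1+s\le2\le d$ we get $\vartheta\ge\frac{s}{d(\tau+d)}$, with equality exactly when $1+s=d$, i.e.\ $d=2$, $s=1$.
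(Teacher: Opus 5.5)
Your proposal is correct and follows essentially the paper's proof: rerun Theorem~\ref{thm:main} with the cutoff $X\circ Q$, use Lemma~\ref{lem:occupation} on the additional arc around $u_m=\frac12$, and establish \eqref{eq:Gseminorm} for the localized pullback. The one difference is in how that H\"older bound is obtained. The paper writes the localized function as $(FX)\circ Q$ and composes the H\"older bound for $FX$, measured in the interval metric on the latitudes, with the $2$-Lipschitz map $Q$; this requires first discarding the null set $H$ on which $\widetilde\Phi_d\ne\Phi_d\circ Q$. You instead verify the bound by a direct case analysis on $\widetilde\Phi_d$, which avoids $H$ altogether.
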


\begin{proof}
Let $Q(u)=(u_1,q(u_2),\dots,q(u_d))$. Because Definition~\ref{def:map} reads the torus coordinates through $[0,1)$ whereas $q(1/2)=1$, the pointwise identity $\widetilde\Phi_d=\Phi_d\circ Q$ may fail on
\[
H:=\{u\in\T^d:\ q(u_m)=1\text{ for some }m\ge2\}.
\]
Off $H$ the identity holds, and $H$ has Haar measure zero. Moreover, along every orbit $u=\theta_0+t\omega$ with $\omega\in\mathcal D(\gamma,\tau)$, the set of times hitting $H$ is discrete because $|\omega_m|\ge\gamma$; hence changing the composition on $H$ affects neither the space means nor any of the time averages. We may therefore use $\varphi\circ\widetilde\Phi_d=F\circ Q$ almost everywhere, with $F=\varphi\circ\Phi_d$. Since $q_\#\lambda_1$ is Lebesgue measure on $[0,1]$ (identified with Haar measure on $\T$ up to its null endpoints), one has $Q_\#\lambda_d=\lambda_d$ in this measure-theoretic sense, so every mean value occurring in \eqref{eq:decomp} is unchanged when $F$ and $X$ are replaced by their compositions with $Q$.

Run the proof of Theorem~\ref{thm:main} with $G$ replaced by $(F\,X)\circ Q$. The product $FX$ vanishes near the endpoint cuts, so it has an unambiguous continuous representative on the interval coordinates. Since $q$ is $2$-Lipschitz from $\T$ to $[0,1]$,
\[
\max\Bigl(\dist_{\T}(u_1,u_1'),\max_{m\ge2}|q(u_m)-q(u_m')|\Bigr)
\le2\dist_{\T^d}(u,u').
\]
The proof of \eqref{eq:Gseminorm} also gives the same H\"older bound for $FX$ when the latitude coordinates are measured by their interval distance (indeed $\dist_{\T^d}\le\dc$), and therefore $[(FX)\circ Q]_s\le2^s C\|\varphi\|_{C^s}\eta^{-s}$. Thus \eqref{eq:Gseminorm} persists with an extra absolute factor. For $E_2$ and $E_3$, note that for each $m\ge2$
\[
\{v\in\T:\ q(v)\in[0,2\eta)\cup(1-2\eta,1]\}=[0,\eta)\cup(1-\eta,1]\ \cup\ (\tfrac12-\eta,\tfrac12+\eta),
\]
a union of two arcs of total length $4\eta$; applying Lemma~\ref{lem:occupation} to each arc separately gives the same bounds up to a factor $2$. The bound for $E_1$ is unchanged, and the choice of $\eta$ and $K$ is the same.
\end{proof}

\begin{remark}[Role of localization for the monotone parametrization]\label{rem:localization}
For the specific monotone parametrization $\Phi_d$, localization plays two distinct roles.

First, it makes the Fourier argument on the torus available for general spherical data: by Remark~\ref{rem:cut}(b)--(c), the composition $F=\varphi\circ\Phi_d$ is, for every $\varphi$ that separates some antipodal pair produced by the cut, discontinuous on $\T^d$; such an $F$ admits no modulus of continuity with respect to $\dist_{\T^d}$, and the approximation scheme of Lemma~\ref{lem:vdp} cannot be applied to it globally. Multiplying by the cutoff $X$ removes a neighbourhood of the cut and restores torus-continuity, indeed torus-H\"older regularity, of the localized composition, at the quantified price \eqref{eq:Gseminorm}.

Second, on the subclass of data for which a cutoff-free argument is available at all the comparison goes \emph{against} localization, and it is worth seeing by how much. Suppose $\varphi$ vanishes on a fixed neighbourhood of the subsphere $\Sigma=\{x_1=x_2=0\}$ of Remark~\ref{rem:cut}(b) (for $d=2$: of the two poles). Unwinding the recursion of Definition~\ref{def:map}, the first two Euclidean coordinates of $\Phi_d(u)$ are $\bigl(\prod_{2\le\ell\le d}\rho_\ell(u_\ell)\bigr)\,(\cos2\pi u_1,\sin2\pi u_1)$, whence
\[
\dist\bigl(\Phi_d(u),\Sigma\bigr)\asymp\prod_{2\le\ell\le d}\rho_\ell(u_\ell),
\]
and since $\rho_\ell\le1$ each factor is bounded below as soon as the product is. Quantitatively, if $\varphi$ vanishes on $\{\dist(x,\Sigma)<\varepsilon_0\}$ then $F=\varphi\circ\Phi_d$ vanishes unless $\rho_\ell(u_\ell)\ge c_d\varepsilon_0$ for every $\ell$, and the estimates $\rho_\ell\asymp_\ell w(u_\ell)^{1/\ell}$ of Step~1 of the proof of Lemma~\ref{lem:reg} turn this into $w(u_\ell)\gtrsim_d\varepsilon_0^{\,\ell}$. There is therefore a scale
\[
\eta_0=\eta_0(\varphi)\asymp_d\varepsilon_0^{\,d}>0,
\]
\emph{independent of $T$}, with $\supp F\subset\T^d_{\eta_0}$; shrinking $\eta_0$ slightly we may also assume that $F$ vanishes on a neighbourhood of the boundary of $\T^d_{\eta_0}$. The polynomial degeneration of $\eta_0$ in $\varepsilon_0$ should be kept in mind when reading the comparison below: the cutoff-free constant is worse than the bare $\varepsilon_0^{-s}$ one might expect.

On such data Lemma~\ref{lem:reg}(b) is available, and not merely Lemma~\ref{lem:reg}(a). We spell out the resulting estimate, since it is the one the comparison rests on. Let $u,u'\in\T^d$ and $\delta=\dist_{\T^d}(u,u')$, and assume $F(u)\neq0$, so that $u\in\T^d_{\eta_0}$.

If some coordinate of index $m\ge2$ wraps, i.e.\ the geodesic realizing $\dist_\T(u_m,u'_m)$ crosses $\{u_m=0\}$, then as in Case~1 of the proof of Theorem~\ref{thm:main} one has $\delta\ge\eta_0$, and the trivial bound gives $|F(u)-F(u')|\le2\|\varphi\|_\infty\le2\|\varphi\|_\infty\eta_0^{-s}\delta^s$.

If no such coordinate wraps, then $\delta=\dc(u,u')$. When $u'\in\T^d_{\eta_0}$, Lemma~\ref{lem:reg}(b) applies to the pair directly and
\[
|F(u)-F(u')|\le[\varphi]_s\,\bigl|\Phi_d(u)-\Phi_d(u')\bigr|^s\le[\varphi]_s\,C_d^{\,s}\,\eta_0^{s(\frac1d-1)}\,\delta^s .
\]
When $u'\notin\T^d_{\eta_0}$, the coordinate path from $u$ to $u'$ leaves $\T^d_{\eta_0}$, and we interpose the first point $u^*$ of that path lying outside $\T^d_{\eta_0}$; by the choice of $\eta_0$, $F(u^*)=0=F(u')$, while the pair $(u,u^*)$ lies in $\overline{\T^d_{\eta_0}}$ and satisfies $\dc(u,u^*)\le\delta$, so the previous display gives $|F(u)-F(u')|=|F(u)-F(u^*)|\le[\varphi]_sC_d^{\,s}\eta_0^{s(\frac1d-1)}\delta^s$.

Since $\eta_0<1$ and $\frac1d-1\ge-1$, we have $\eta_0^{s(\frac1d-1)}\le\eta_0^{-s}$, and combining the two cases yields $F\in C^s(\T^d)$ with respect to $\dist_{\T^d}$ and
\[
[F]_s\le C_d\,\|\varphi\|_{C^s}\,\eta_0^{s(\frac1d-1)}\le C_d\,\|\varphi\|_{C^s}\,\eta_0^{-s}.
\]
Only the weaker of the two bounds is used below; we record the sharper one because it shows, as in the discussion following \eqref{eq:Gseminorm}, that the geometric exponent of Lemma~\ref{lem:reg}(b) is not where the loss sits.
Thus $F$ is H\"older of exponent $s$, not of exponent $s/d$ as Lemma~\ref{lem:reg}(a) alone would give. Running Lemmas~\ref{lem:fourier}, \ref{lem:vdp} and \ref{lem:osc} on $F$ with $K=\lceil T^{1/(\tau+d)}\rceil$ and no cutoff then yields
\[
\bigl|A_{t_0,T}(\varphi)-\langle\varphi\rangle\bigr|
\le C_d\,\|\varphi\|_{C^s}\,\eta_0^{-s}\,T^{-\frac{s}{\tau+d}},
\]
an exponent larger than $\vartheta=\frac{s}{(1+s)(\tau+d)}$ by exactly the factor $1+s$, for every $d\ge2$ and every $s\in(0,1]$.

This describes the cost of the localization rather than a defect of it. The constant above degenerates as $\eta_0\to0$ at the rate $\eta_0^{-s}$, and for a general $\varphi\in C^s(\Sph^d)$ no positive $\eta_0$ is available at all: the cutoff manufactures one, and $\vartheta$ is what results from letting the manufactured scale depend on $T$ and balancing $\eta^{-s}$ against the time the flow spends in the discarded collar (Lemma~\ref{lem:occupation}). The factor $1+s$ is the price of that balancing; it is the same in every dimension, although $\vartheta$ and $\frac{s}{\tau+d}$ both decrease with $d$. Whether the factor can be recovered is Problem~\ref{prob:sharp}.
\end{remark}

\begin{remark}[Subtracting the jump: an explicit alternative when $d=2$]\label{rem:jump}
For $d=2$ the discontinuity of $F=\varphi\circ\Phi_2$ across the cut can be removed by an elementary explicit correction. Recording this makes precise how much the localization of Theorem~\ref{thm:main} gains over the cheaper route, and where the cheaper route stops.

Write $N=(0,0,1)$ and $S=(0,0,-1)$. Since $\rho_2(u_2)\to0$ both as $u_2\to0^+$ and as $u_2\to1^-$, one has $\Phi_2(u_1,u_2)\to S$ and $\Phi_2(u_1,u_2)\to N$ respectively, \emph{uniformly in $u_1$}: the limits do not depend on the longitude. The jump of $F$ across $\{u_2=0\}$ is therefore the constant
\[
c_\varphi:=\varphi(N)-\varphi(S),\qquad |c_\varphi|\le2^s[\varphi]_s .
\]
Reading $u_2$ through its representative in $[0,1)$, set $\widetilde F(u_1,u_2):=F(u_1,u_2)-c_\varphi u_2$. Then $\widetilde F(u_1,0)=\varphi(S)=\widetilde F(u_1,1^-)$ for every $u_1$, so $\widetilde F$ is continuous on $\T^2$, and moreover
\[
[\widetilde F]_{s/2}\le C\,[\varphi]_s\qquad\text{with respect to }\dist_{\T^2}.
\]
Indeed, for a pair whose geodesics avoid the cut one has $\dist_{\T^2}=\dc=:\delta$ and it suffices to combine Lemma~\ref{lem:reg}(a) with $|c_\varphi||u_2-u_2'|\le|c_\varphi|\delta\le|c_\varphi|\delta^{s/2}$, valid since $\delta\le1$; while for a pair $u=(u_1,\varepsilon)$, $u'=(u_1',1-\varepsilon')$ whose second coordinate wraps one routes through the cut,
\begin{align*}
|\widetilde F(u)-\widetilde F(u')|
&\le|\widetilde F(u_1,\varepsilon)-\widetilde F(u_1,0)|
+|\widetilde F(u_1,0)-\widetilde F(u_1',1^-)|\\
&\quad+|\widetilde F(u_1',1^-)-\widetilde F(u_1',1-\varepsilon')|,
\end{align*}
where the middle term vanishes because both values equal $\varphi(S)$, and where $\varepsilon,\varepsilon'\le\dist_{\T}(u_2,u_2')\le\delta$.

Since $t\mapsto\theta_{0,2}+\omega_2t$ is a circle rotation of speed $|\omega_2|\ge\gamma$, the sawtooth term is averaged exactly: splitting the window into full periods and a remainder, as in Lemma~\ref{lem:occupation},
\[
\Bigl|\frac1T\int_{t_0}^{t_0+T}\{\theta_{0,2}+\omega_2t\}\,dt-\frac12\Bigr|\le\frac1{\gamma T},
\]
and $\int_{\T^2}F=\int_{\T^2}\widetilde F+\frac12c_\varphi$. Running Lemmas~\ref{lem:fourier}, \ref{lem:vdp} and \ref{lem:osc} on $\widetilde F$ with $K=\lceil T^{1/(\tau+2)}\rceil$ and \emph{no} cutoff therefore gives
\[
\bigl|A_{t_0,T}(\varphi)-\langle\varphi\rangle\bigr|
\le C\,\|\varphi\|_{C^s}\Bigl(T^{-\frac{s}{2(\tau+2)}}+T^{-1}\Bigr).
\]
Comparing exponents, $\frac{s}{2(\tau+2)}\le\vartheta=\frac{s}{(1+s)(\tau+2)}$ because $1+s\le2$, with equality exactly at the Lipschitz endpoint $s=1$. In dimension two the localization thus gains for $s<1$ and gains nothing at $s=1$.

The route does not transfer verbatim to higher dimensions, and it is worth being exact about what does and does not survive. For $d\ge3$ and $2\le m\le d$, the two one-sided limits of $\Phi_d$ along $\{u_m=0\}$ differ by the vector \eqref{eq:jumpvector} of Remark~\ref{rem:cut}(c), namely $2\bigl(\prod_{m<\ell\le d}\rho_\ell(u_\ell)\bigr)e_{m+1}$, which, the functions $\rho_\ell$ being continuous and $1$-periodic on $\T$ (Step~1 of the proof of Lemma~\ref{lem:reg}), is a \emph{continuous} function of the remaining latitudes: the discontinuity of $\Phi_d$ across a single cut is thus no worse than for $d=2$, and it even degenerates where those latitudes approach their own poles. What fails is one step later. The jump of $F=\varphi\circ\Phi_d$ across $\{u_m=0\}$ is
\[
J_m=\varphi\bigl(\Phi_d|_{u_m=1^-}\bigr)-\varphi\bigl(\Phi_d|_{u_m=0^+}\bigr),
\]
and the two arguments involve $z_\ell(u_\ell)$ for $\ell>m$, so $J_m$ is itself discontinuous across the remaining cuts. Subtracting $J_m(\cdot)\,u_m$ therefore restores continuity in the $m$-th latitude only, and the correction has to be applied again to the remaining ones, $d-1$ times in all. We have not carried out this recursion and we do not claim that it must degrade; the point we do make is the comparative one, that localization treats all latitudes simultaneously and returns $\vartheta$ by one argument in every dimension, whereas a recursive subtraction requires a separate argument at each latitude and, for $d=2$ where it can be carried out in closed form, stops at $\frac{s}{2(\tau+2)}$. Whether the recursion yields a better exponent for $d\ge3$ is left open in Problem~\ref{prob:sharp}.
\end{remark}

\begin{lemma}[Hardy--Krause variation of smooth pullbacks]\label{lem:HKpullback}
Identify $\T^d$ with the half-open cube $[0,1)^d$ and use on $[0,1]^d$ the interval extension of $\Phi_d$ from Definition~\ref{def:map}. If $\psi\in C^d(\Sph^d)$, then $H=\psi\circ\Phi_d$ has bounded Hardy--Krause variation on $[0,1]^d$, with the convention anchored at the upper endpoint, and
\[
V_{\mathrm{HK}}(H)\le C_d\,\|\psi\|_{C^d(\Sph^d)}.
\]
More precisely, for every nonempty $I\subset\{1,\dots,d\}$, let $H_I$ be the restriction of $H$ to the face obtained by fixing $u_j=1$ for $j\notin I$. Then
\[
V^{(|I|)}(H_I)\le C_d\,\|\psi\|_{C^{|I|}(\Sph^d)}.
\]
\end{lemma}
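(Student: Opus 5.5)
The plan is the standard criterion for Hardy--Krause variation: the Vitali variation of a function on a box is bounded by the $L^1$ norm of its mixed derivative. We apply it face by face, and the work is to show that each mixed derivative of $\psi\circ\Phi_d$ is integrable even though $\Phi_d$ degenerates at the poles.

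\emph{Setup.} First extend $\psi$ to a $C^d$ function on a neighbourhood of $\Sph^d$ in $\R^{d+1}$, for instance by $x\mapsto\chi(|x|)\psi(x/|x|)$ on an annulus, so that ordinary Euclidean derivatives $D^p\psi$, $p\le d$, are available. Their sup-norms are bounded by $C_d\|\psi\|_{C^d(\Sph^d)}$ (resp.\ $C^{|I|}$).

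\emph{Key structural step.} Unwinding Definition~\ref{def:map} as in Remark~\ref{rem:cut}(c), every Euclidean coordinate of $\Phi_d(u)$ is a product $\prod_{\ell}a_\ell(u_\ell)$ of one-variable factors. Here $a_1\in\{\cos2\pi u_1,\sin2\pi u_1,1\}$ and $a_\ell\in\{\rho_\ell,z_\ell,1\}$ for $\ell\ge2$, each taking values in $[-1,1]$. Consequently, for every $B\subset\{1,\dots,d\}$ the mixed partial $\partial_B\Phi_d$ exists on the open cube and satisfies
\[
|\partial_B\Phi_d(u)|\le C_d\prod_{\ell\in B}g_\ell(u_\ell),\qquad g_1\equiv2\pi,\quad g_\ell=|\rho_\ell'|+|z_\ell'|\ \ (\ell\ge2),
\]
because each variable is differentiated at most once.

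Fix a nonempty $I$ and compute $\partial_I H$ by the multivariate chain rule (Fa\`a di Bruno for distinct variables). This gives a sum over set partitions $\{B_1,\dots,B_p\}$ of $I$ of terms $D^p\psi(\Phi_d)[\partial_{B_1}\Phi_d,\dots,\partial_{B_p}\Phi_d]$. The blocks are disjoint, so the bounds above multiply to
\[
|\partial_I H(u)|\le C_d\,\|\psi\|_{C^{|I|}}\prod_{\ell\in I}g_\ell(u_\ell).
\]
By Step~1 of the proof of Lemma~\ref{lem:reg}, $g_\ell(u)\le C_\ell\,w(u)^{\frac1\ell-1}$, which is integrable on $(0,1)$. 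Fubini then gives $\int|\partial_I H|\le C_d\|\psi\|_{C^{|I|}}$. The same pointwise bound holds on each face $u_j=1$, $j\notin I$: there the interval extension gives $\rho_j=0$ and $z_j=1$, so the coordinates remain products of the same type, and the derivatives in the variables of $I$ are bounded exactly as before. This yields $\int|\partial_I H_I|\le C_d\|\psi\|_{C^{|I|}}$.

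\emph{Main obstacle.} The formula $V^{(|I|)}(H_I)\le\int|\partial_I H_I|$ is classical for $C^{|I|}$ functions on a closed box, but here $H_I$ is smooth only on the open box, with derivative blow-up at $u_\ell\in\{0,1\}$. I would handle this by approximation.
\begin{enumerate}[label=\textup{(\roman*)}, leftmargin=2.2em]
\item By Lemma~\ref{lem:reg}(a), read in the interval coordinates with $z_m(1)=1$, $\rho_m(1)=0$, the map $\Phi_d$ is continuous on the closed cube, and hence so is $H_I$.
\item Given any grid partition of the face, move the outermost grid points inward by $\varepsilon$. Each Vitali alternating sum for the shrunken grid is the integral of $\partial_I H_I$ over the corresponding cell, hence at most $\int|\partial_I H_I|$.
\item Letting $\varepsilon\to0$, continuity of $H_I$ on the closed box makes the alternating sums converge to those of the original grid.
\end{enumerate}
Taking the supremum over grids gives the Vitali bound, and summing over the $2^d-1$ sets $I$ (faces anchored at $u_j=1$) gives the total bound $V_{\mathrm{HK}}(H)\le C_d\|\psi\|_{C^d}$.
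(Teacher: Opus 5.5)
Your proposal is correct and follows essentially the same route as the paper. Both arguments first derive the pointwise bound $|\partial_I H_I|\le C_d\|\psi\|_{C^{|I|}}\prod_{\ell\in I,\,\ell\ge2}w(u_\ell)^{\frac1\ell-1}$ from the chain rule and Step~1 of Lemma~\ref{lem:reg}. They then push the boundary grid coordinates inward, write each rectangular increment as the integral of the mixed derivative, and pass to the limit using continuity of the interval extension on the closed face. Your explicit product structure of the coordinates of $\Phi_d$, combined with the Fa\`a di Bruno expansion over set partitions, is just a more detailed justification of the paper's one-line claim that each latitude variable contributes at most one first derivative of $z_\ell$ or $\rho_\ell$.
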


\begin{proof}
On the relative interior of the face corresponding to $I$, differentiate once in every free variable. The chain rule produces a finite sum involving derivatives of $\psi$ of order at most $|I|$ and, for every latitude variable $u_\ell$ with $\ell\in I$, $\ell\ge2$, at most one first derivative of $z_\ell$ or $\rho_\ell$. By Step~1 of Lemma~\ref{lem:reg}, while the derivative in $u_1$ is uniformly bounded,
\begin{equation}\label{eq:HKmixed}
\bigl|\partial_I H_I(u_I)\bigr|
\le C_d\,\|\psi\|_{C^{|I|}(\Sph^d)}
\prod_{\substack{\ell\in I\\ \ell\ge2}}w(u_\ell)^{\frac1\ell-1}
\qquad\text{for a.e. }u_I\in(0,1)^{|I|}.
\end{equation}
Every exponent $\frac1\ell-1$ is strictly greater than $-1$, hence the right-hand side belongs to $L^1((0,1)^{|I|})$.

It remains to justify that this integrability controls Vitali variation despite the endpoint singularities. Fix a rectangular partition $\mathcal P$ of the face. For $\delta>0$, move every partition coordinate equal to $0$ or $1$ to $\delta$ or $1-\delta$, respectively, leaving the interior coordinates unchanged; denote the resulting interior partition by $\mathcal P_\delta$. On every rectangle $R$ of $\mathcal P_\delta$, the function $H_I$ is $C^{|I|}$ in a neighbourhood of $R$, so repeated application of the one-dimensional fundamental theorem of calculus gives
\[
\Delta_R H_I=\int_R\partial_I H_I(u_I)\,du_I,
\]
where $\Delta_R$ denotes the alternating rectangular increment. Therefore
\[
\sum_{R\in\mathcal P_\delta}|\Delta_R H_I|
\le\int_{(0,1)^{|I|}}|\partial_I H_I(u_I)|\,du_I
\le C_d\,\|\psi\|_{C^{|I|}(\Sph^d)}.
\]
The interval extension of $\Phi_d$, hence $H_I$, is continuous on the closed face. Letting $\delta\downarrow0$ makes every rectangular increment for $\mathcal P_\delta$ converge to the corresponding increment for $\mathcal P$. Thus the same bound holds for $\mathcal P$. Taking the supremum over all rectangular partitions yields the asserted estimate for $V^{(|I|)}(H_I)$. Summing over all nonempty $I$ is exactly the Hardy--Krause variation anchored at $1$ and proves the result.
\end{proof}

\begin{remark}[Mollification and the Koksma--Hlawka route]\label{rem:HK}
The discrepancy route is not closed off by the low regularity of $\varphi$: it can always be entered by mollifying first. Since this is the natural objection to the argument of Theorem~\ref{thm:main}, we carry it out and compare exponents. The outcome is that neither route dominates, and the threshold is explicit.

\emph{(a) Discrepancy of the flow.} Let $\omega\in\mathcal D(\gamma,\tau)$ and let $D_T=D_T(\theta_0,\omega)$ be the extreme discrepancy of the flow segment $\{\theta_0+t\omega:0\le t\le T\}$ with respect to $\lambda_d$. The continuous form of the Erd\H os--Tur\'an--Koksma inequality \cite[Ch.~2]{KuipersNiederreiter}, \cite[Ch.~1]{DrmotaTichy}, combined with Lemma~\ref{lem:osc} and the Diophantine bound, gives for every $K\in\N$
\[
D_T\le C_d\Bigl(\frac1K+\frac1{\gamma T}\sum_{0<|k|_\infty\le K}\frac{|k|_\infty^{\tau}}{r(k)}\Bigr),
\qquad r(k)=\prod_{m=1}^{d}\max(1,|k_m|).
\]
Since $\sum_{0<|k|_\infty\le K}r(k)^{-1}=O\bigl((\log K)^{d}\bigr)$, the sum is $O\bigl(K^{\tau}(\log K)^{d}\bigr)$, and optimizing in $K$ yields
\begin{equation}\label{eq:discflow}
D_T\le C(d,\gamma,\tau,\varepsilon)\,T^{-\frac1{\tau+1}+\varepsilon}\qquad(\varepsilon>0),
\end{equation}
uniformly in $\theta_0$ and, by Remark~\ref{rem:window}, in the position of the window.

\emph{(b) Finite variation after mollification.} We use Hardy--Krause variation on the fundamental cube $[0,1]^d$, anchored at the upper endpoint. Lemma~\ref{lem:HKpullback} gives the precise statement needed below:
\[
V_{\mathrm{HK}}\bigl(\psi\circ\Phi_d\bigr)\le C_d\,\|\psi\|_{C^{d}(\Sph^d)}
\qquad(\psi\in C^d(\Sph^d)).
\]
In particular, the endpoint singularities of the inverse-CDF coordinates are integrable in every mixed derivative entering the Hardy--Krause face decomposition, and the proof of Lemma~\ref{lem:HKpullback} justifies the passage from those mixed derivatives to Vitali variation rather than assuming absolute continuity at the polar endpoints. For clarity about the form of Koksma--Hlawka used here, let
\[
\mu_T:=\frac1T\int_0^T\delta_{\theta_0+t\omega}\,dt,
\qquad
D_T^*:=\sup_{x\in[0,1]^d}\bigl|\mu_T([0,x))-\lambda_d([0,x))\bigr|.
\]
The measure version of the Koksma--Hlawka inequality gives, for every function $H$ of bounded Hardy--Krause variation (with the matching endpoint convention),
\begin{equation}\label{eq:KHmeasure}
\left|\int_{[0,1]^d}H\,d\mu_T-\int_{[0,1]^d}H\,d\lambda_d\right|
\le V_{\mathrm{HK}}(H)\,D_T^*.
\end{equation}
For completeness, here is the measure-theoretic justification in the convention used above. Put $\nu=\mu_T-\lambda_d$, so that $\nu([0,1]^d)=0$, and for every nonempty $I\subset\{1,\dots,d\}$ write
\[
\Delta_I(x_I):=\nu\bigl([0,x_I)\times[0,1]^{I^c}\bigr).
\]
Each such box is an anchored box (take the remaining coordinates equal to $1$), hence $|\Delta_I(x_I)|\le D_T^*$. The multivariate Stieltjes integration-by-parts formula for Hardy--Krause functions, applied face by face with the same endpoint anchoring as in the definition of $V_{\mathrm{HK}}$, expresses $\int H\,d\nu$ as a finite signed sum of integrals of the functions $\Delta_I$ against the variation measures of the corresponding face restrictions of $H$. Taking absolute values therefore gives
\[
\left|\int_{[0,1]^d}H\,d\nu\right|
\le D_T^*\sum_{\emptyset\ne I\subset\{1,\dots,d\}}
 V^{(|I|)}\!\left(H|_{\text{face }I}\right)
=V_{\mathrm{HK}}(H)D_T^*,
\]
which is \eqref{eq:KHmeasure}. Thus the argument uses only the signed measure $\mu_T-\lambda_d$ and its distribution function on anchored boxes; empirical measures are not essential. Moreover $D_T^*\le D_T$, because every anchored box $[0,x)$ is among the axis-parallel boxes entering the extreme discrepancy. Therefore \eqref{eq:KHmeasure}, applied to $H=\psi\circ\Phi_d$, yields an integration error bounded by $V_{\mathrm{HK}}(\psi\circ\Phi_d)D_T$. This is why we do not assert that $F$ has infinite variation: the obstruction of Section~\ref{sec:intro} is one of low differentiability for the criterion used here, and mollification removes it at a price.

\emph{(c) The resulting exponent.} Let $\varphi\in C^s(\Sph^d)$ and let $\varphi_\epsilon$ be its mollification at scale $\epsilon\in(0,1)$ on $\Sph^d$, so that
\[
\|\varphi-\varphi_\epsilon\|_\infty\le C_d\,[\varphi]_s\,\epsilon^{s},\qquad
\|\varphi_\epsilon\|_{C^{d}(\Sph^d)}\le C_d\,\|\varphi\|_{C^s}\,\epsilon^{s-d}.
\]
Combining the trivial bound for $\varphi-\varphi_\epsilon$ with Koksma--Hlawka for $\varphi_\epsilon$ and \eqref{eq:discflow},
\[
\bigl|A_{t_0,T}(\varphi)-\langle\varphi\rangle\bigr|
\le C\,\|\varphi\|_{C^s}\bigl(\epsilon^{s}+\epsilon^{s-d}\,T^{-\frac1{\tau+1}+\varepsilon}\bigr),
\]
and the choice $\epsilon^{d}=T^{-\frac1{\tau+1}+\varepsilon}$ gives
\begin{equation}\label{eq:HKrate}
\bigl|A_{t_0,T}(\varphi)-\langle\varphi\rangle\bigr|\le C\,\|\varphi\|_{C^s}\,T^{-\vartheta_{\mathrm{HK}}+\varepsilon'},
\qquad
\vartheta_{\mathrm{HK}}=\frac{s}{d(\tau+1)} .
\end{equation}

\emph{(d) Comparison.} A direct computation of the two denominators shows
\begin{equation}\label{eq:threshold}
\vartheta>\vartheta_{\mathrm{HK}}
\iff (1+s)(\tau+d)<d(\tau+1)
\iff \tau\,(d-1-s)>d\,s .
\end{equation}
Three consequences deserve to be stated plainly. First, at the Lipschitz endpoint $s=1$ in dimension $d=2$ the left-hand side of \eqref{eq:threshold} vanishes, so \eqref{eq:HKrate} is strictly better than Theorem~\ref{thm:main}: for $\tau=1$ it gives $\frac14$ against $\vartheta=\frac16$. Second, for every $d\ge2$ and $\tau\ge d-1$ the condition \eqref{eq:threshold} holds for all sufficiently small $s$, and it holds for all $s\in(0,1]$ as soon as $\tau(d-2)>d$; on rough data, and in high dimension, Theorem~\ref{thm:main} is the stronger statement. Third, the two bounds differ in kind and not only in exponent: the localization uses nothing beyond the $C^s$ norm that appears in its conclusion, its constant depends only on $(d,\gamma,\tau,s)$, and it loses no factor $T^{\varepsilon}$, whereas \eqref{eq:HKrate} passes through $C^{d}$ data and through the logarithmic losses in \eqref{eq:discflow}. We therefore regard the two as complementary. Which of them is closer to the truth is part of Problem~\ref{prob:sharp}.
\end{remark}

\begin{lemma}[Cutoff-free Fourier decay at the Lipschitz endpoint]\label{lem:fourierpullback}
Let $d\ge2$ and let $\varphi\in C^1(\Sph^d)$ in the Lipschitz sense of the Notation. Put $F=\varphi\circ\Phi_d$ on the fundamental cube $[0,1)^d$. Then, for every $k\in\Z^d\setminus\{0\}$,
\[
|\widehat F(k)|\le C_d\,\|\varphi\|_{C^1(\Sph^d)}\,|k|_\infty^{-1}.
\]
No cutoff near the polar set is required for this coefficientwise estimate.
\end{lemma}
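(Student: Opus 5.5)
Pick an index $j$ with $|k_j|=|k|_\infty$ and integrate by parts in the single variable $u_j$, keeping the other coordinates fixed. The point is that, on the fundamental cube, $F$ is absolutely continuous in each variable separately. Its partial derivative $\partial_{u_j}F$ is integrable with a bound of order $\|\varphi\|_{C^1}$, uniformly in the frozen coordinates. The cut does not obstruct this because we work on the interval $[0,1)$, not on the circle, and the boundary terms produced by the jump are bounded.

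\emph{Case $j=1$.} The map $u_1\mapsto\Phi_d(u)$ is smooth and $1$-periodic with $|\partial_{u_1}\Phi_d|\le2\pi$. Hence $u_1\mapsto F(u)$ is $2\pi[\varphi]_1$-Lipschitz on $\T$, i.e.\ Lipschitz on the circle, with no cut in this coordinate. The one-dimensional version of the argument of Lemma~\ref{lem:fourier} (translating by $h=\frac1{2k_1}e_1$) gives $|\widehat F(k)|\le\frac12\cdot2\pi[\varphi]_1(2|k_1|)^{-1}$ directly.

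\emph{Case $j=m\ge2$.} Fix $\bar u=(u_\ell)_{\ell\ne m}$ and set $f(v)=F(\dots,v,\dots)$ for $v\in[0,1]$, using the interval extension of Definition~\ref{def:map} at $v=1$. By the recursion and Remark~\ref{rem:cut}(c), $f$ is $\varphi$ composed with a curve whose coordinates are products of the bounded factors $\rho_\ell(u_\ell)$ and $\Phi_{m-1}$ with $\rho_m(v)$ and $z_m(v)$. The bounds of Step~1 in Lemma~\ref{lem:reg} give $|\partial_v\Phi_d|\le C_d\,w(v)^{\frac1m-1}$, which is integrable on $(0,1)$. So $f$ is absolutely continuous on $[0,1]$ with
\[
\int_0^1|f'(v)|\,dv\le C_d[\varphi]_1\int_0^1 w(v)^{\frac1m-1}\,dv\le C_d[\varphi]_1 .
\]
Absolute continuity holds because $f$ is Lipschitch-composed with an absolutely continuous curve: $\varphi$ is Lipschitz on $\Sph^d$, and each coordinate of the curve is absolutely continuous since its derivative is in $L^1$ and it is continuous at the endpoints.

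Integrating by parts on $[0,1]$,
\[
\int_0^1 f(v)e(-k_mv)\,dv=\frac{f(1)-f(0)}{-2\pi i k_m}\cdot e(-k_m)\big|\ \text{(boundary)}+\frac1{2\pi ik_m}\int_0^1 f'(v)e(-k_mv)\,dv ,
\]
and here $e(-k_m)=1$. The boundary term is at most $2\|\varphi\|_\infty/(2\pi|k_m|)$ and the integral term is at most $C_d[\varphi]_1/|k_m|$. Multiplying by the remaining exponential factors and integrating over $\bar u$ gives the claim with the constant $C_d\|\varphi\|_{C^1}$.

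The main obstacle is the uniform $L^1$ bound on $f'$ near the endpoints $v\in\{0,1\}$, where $\rho_m'$ blows up like $w^{\frac1m-1}$. Since this exponent exceeds $-1$, Step~1 of Lemma~\ref{lem:reg} suffices, and the frozen factors $\rho_\ell\le1$ and $|\Phi_{m-1}|=1$ keep the bound uniform in $\bar u$. The boundary jump costs only $\|\varphi\|_\infty$, not regularity. This is why no cutoff is needed for the coefficientwise bound, even though $F$ is not H\"older on $\T^d$.
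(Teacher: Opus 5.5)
Your proof is correct and follows the paper's argument. You integrate by parts in one variable, in a coordinate with $|k_m|=|k|_\infty$; this uses absolute continuity of the sections on the closed interval, whose endpoint singularities $w^{\frac1m-1}$ are integrable, bounds the boundary term by the jump $2\|\varphi\|_\infty$, and concludes with Fubini. The only cosmetic difference is that for $m=1$ you use the half-period translation trick of Lemma~\ref{lem:fourier} instead of integrating by parts; this is equally valid because that section is periodic and $2\pi[\varphi]_1$-Lipschitz.
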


\begin{proof}
Fix $m\in\{2,\dots,d\}$ and fix the remaining coordinates in the interior of their interval representatives. By Step~1 of Lemma~\ref{lem:reg}, the functions $z_m$ and $\rho_m$ are $C^1$ on $(0,1)$, extend continuously to $[0,1]$, and their derivatives are integrable there. Applying the fundamental theorem of calculus on $[\epsilon,1-\epsilon]$ and then letting $\epsilon\downarrow0$ shows that these endpoint extensions are absolutely continuous on $[0,1]$. The recursive formula for $\Phi_d$ therefore implies that the section
\[
u\longmapsto\Phi_d(u_1,\dots,u_{m-1},u,u_{m+1},\dots,u_d)
\]
is absolutely continuous on $[0,1]$; moreover, for a.e. $u_m\in(0,1)$,
\[
|\partial_{u_m}\Phi_d(u)|\le C_d\,w(u_m)^{\frac1d-1},
\]
and the right-hand side is integrable. Since a Lipschitz function composed with an absolutely continuous curve is absolutely continuous, the corresponding section of $F$ is absolutely continuous, with
\[
|\partial_{u_m}F(u)|\le [\varphi]_1\,|\partial_{u_m}\Phi_d(u)|
\quad\text{a.e.}
\]
Consequently its derivative has $L^1(0,1)$ norm at most $C_d[\varphi]_1$, uniformly in the frozen coordinates. Its endpoint values are understood through the one-sided interval extension of Definition~\ref{def:map}, and their difference is bounded by $2\|\varphi\|_\infty$.

If $k_m\ne0$, one-dimensional integration by parts on $[\epsilon,1-\epsilon]$, followed by $\epsilon\downarrow0$, is therefore justified by absolute convergence and yields, for almost every choice of the remaining coordinates,
\[
\left|\int_0^1F(u)e(-k_mu_m)\,du_m\right|
\le \frac{C_d\,\|\varphi\|_{C^1}}{|k_m|}.
\]
The same estimate holds for $m=1$: the $u_1$-section is periodic and Lipschitz, hence absolutely continuous, and its boundary contribution vanishes. Integrating the preceding one-dimensional estimate over the remaining coordinates by Fubini gives
\[
|\widehat F(k)|\le \frac{C_d\,\|\varphi\|_{C^1}}{|k_m|}
\qquad\text{whenever }k_m\ne0.
\]
Choosing $m$ with $|k_m|=|k|_\infty$ proves the claim.
\end{proof}

\begin{remark}[Why a single cutoff scale, and what a sharper argument would require]\label{rem:singlescale}
Two refinements of the proof of Theorem~\ref{thm:main} suggest themselves, and it is useful to say why the first does not help and what the second would need.

(a) \emph{Dyadic decomposition of the polar collar.} Instead of the single cutoff $X$ at scale $\eta$, one may take a smooth partition subordinate to the shells $\{2^{-j-1}\le\min_{m\ge2}\min(u_m,1-u_m)<2^{-j}\}$, $j\le J$, with $2^{-J}$ the innermost scale. The piece of $F$ carried by the $j$-th shell has H\"older seminorm $\asymp\|\varphi\|_{C^s}2^{js}$ and so contributes $C\|\varphi\|_{C^s}2^{js}\bigl(K^{-s}+K^{\tau+d-s}/T\bigr)$ to the analogue of $E_1$, while the uncut innermost collar contributes $\asymp2^{-J}$ to the analogues of $E_2$ and $E_3$. Since the geometric sum $\sum_{j\le J}2^{js}$ is dominated by its last term, the total reproduces exactly the balance already optimized above, with $2^{-J}$ playing the role of $\eta$. The reason is that $F$ is not small near the polar set: $\|\varphi\|_\infty$ appears on every shell. A multiscale gain would require decay of $\varphi$ towards the subsphere $\Sigma$ of Remark~\ref{rem:cut}(b), which is not assumed here.

(b) \emph{Monotonicity in the latitude variables, and why a cutoff-free argument is not immediate.} Lemma~\ref{lem:fourierpullback} makes the coefficientwise gain precise at the Lipschitz endpoint:
\[
|\widehat F(k)|\le C_d\|\varphi\|_{C^1}|k|_\infty^{-1}
\qquad(k\ne0),
\]
with no cutoff and no factor $\eta^{-s}$. In particular, the one-dimensional integration by parts is legitimate despite the polar endpoint singularities: the relevant sections are absolutely continuous with integrable derivative, and for latitude variables the boundary term is exactly the finite jump across the cut. Thus frequencies with some vanishing coordinates are not the obstruction, since one always integrates in a coordinate $m$ satisfying $|k_m|=|k|_\infty$.

The obstruction lies elsewhere, and it is instructive. The series
\[
\sum_{k\neq0}\frac{|\widehat F(k)|}{|k\cdot\omega|}
\ \lesssim\ \sum_{j\ge1}j^{d-1}\cdot j^{-1}\cdot j^{\tau}
=\sum_{j\ge1}j^{\tau+d-2}
\]
diverges, since $\tau\ge d-1\ge1$; the frequency sum must therefore be truncated, and the truncation error has to be controlled pointwise along a single orbit, that is, in the uniform norm --- which is precisely what the discontinuity of $F$ forbids, since $\|F-V_K^{\otimes d}F\|_\infty$ does not tend to $0$. A cutoff-free argument would consequently have to replace the sup-norm approximation of Lemma~\ref{lem:vdp} by control of the tail on a set of times, i.e.\ by occupation-time information of exactly the kind that the cutoff already supplies.

\emph{Heuristic only.} The following observation is not used in any result of the paper. A possible cutoff-free route would require a quantitative estimate for Fej\'er convergence away from the jump set. If one could prove, for Lipschitz $\varphi$, an error $O\bigl(\|\varphi\|_{C^1}(K\varrho)^{-1}\log K\bigr)$ outside a collar of width $\varrho$, then Lemma~\ref{lem:occupation} would control the time spent inside the collar by $O(\varrho+(\gamma T)^{-1})$. Optimizing this hypothetical bound in $\varrho$ would give a truncation error $O\bigl((K^{-1}\log K)^{1/2}\bigr)$; combining it with the divisor sum $O(K^{\tau+d-1}/T)$ suggests, after choosing $K\asymp T^{1/(\tau+d-\frac12)}$, the candidate exponent
\[
\frac1{2\tau+2d-1}\qquad\text{at }s=1,
\]
in place of $\vartheta|_{s=1}=1/[2(\tau+d)]$. We make no claim that the required Fej\'er estimate holds in the form just stated; the computation is included only as motivation for Problem~\ref{prob:sharp}.
\end{remark}

\begin{remark}[The circle case]\label{rem:circle}
For $d=1$ the orbit $t\mapsto\Phi_1(\theta_0+t\omega)$ is periodic with period $|\omega|^{-1}$. Splitting an arbitrary window into complete periods and a remainder gives directly
\[
|A_{t_0,T}(\varphi)-\langle\varphi\rangle|\le \frac{2\|\varphi\|_\infty}{|\omega|T}
\]
for $\omega\neq0$ (with the obvious harmless modification when $T<|\omega|^{-1}$). Thus the small-divisor and polar-localization phenomena studied here are genuinely higher-dimensional.
\end{remark}

\begin{remark}[Continuous data]\label{rem:continuous}
For merely continuous $\varphi$ convergence persists, without any rate, whenever $k\cdot\omega\neq0$ for all $k\neq0$. Indeed, by Remark~\ref{rem:cut}(b) the composition $F=\varphi\circ\Phi_d$ is bounded and continuous off the closed Haar-null polar cut $P_d$, hence Riemann integrable on $\T^d$, and the classical extension of unique ergodicity to Riemann-integrable observables applies (cf.\ \cite[Ch.~1]{CFS}, \cite{KuipersNiederreiter}). Explicitly, with $X_\eta$ the cutoff from the proof of Theorem~\ref{thm:main}, the functions
\[
h^{\pm}_\eta=F\,X_\eta\pm\|\varphi\|_\infty\,(1-X_\eta)
\]
are continuous on $\T^d$ (the factor $X_\eta$ vanishes near $P_d$), satisfy $h^-_\eta\le F\le h^+_\eta$, and $\int_{\T^d}(h^+_\eta-h^-_\eta)\,d\lambda_d\le8(d-1)\eta\,\|\varphi\|_\infty$; applying the window-uniform convergence for continuous observables to $h^{\pm}_\eta$ and letting $\eta\to0$ gives
\[
A_{t_0,T}(\varphi)\xrightarrow[T\to\infty]{}\langle\varphi\rangle\qquad\text{uniformly in }(\theta_0,t_0),
\]
which is the precise qualitative form of \eqref{eq:qualitative} used in this paper. No quantitative modulus follows from rational independence alone uniformly over the frequency; Theorem~\ref{thm:opt} makes this precise.
\end{remark}

\section{Quantitative spiral means}\label{sec:spirals}

Let $x_0\in\R^{d+1}$, $r_0>0$, and let $f$ be defined on the punctured ball $\{0<|x-x_0|\le r_0\}$. For $0<r\le r_0$ write $f_r(\xi)=f(x_0+r\xi)$, $\xi\in\Sph^d$, and
\[
m(r)=\langle f_r\rangle=\int_{\Sph^d}f(x_0+r\xi)\,d\sigma_d(\xi).
\]
We consider the following hypotheses, for parameters $0<s\le1$, $0<\kappa\le1$ and $\lambda\ge0$:
\begin{enumerate}[label=\textup{(H\arabic*)}, leftmargin=2.6em]
\item\label{H1} \emph{(uniform angular regularity)}\quad $A:=\displaystyle\sup_{0<r\le r_0}\|f_r\|_{C^s(\Sph^d)}<\infty$;
\item\label{H2} \emph{(radial regularity, with degeneration exponent $\lambda$)}
\[
|f_r(\xi)-f_{r'}(\xi)|\le B\,\frac{|r-r'|^{\kappa}}{\min(r,r')^{\lambda}}
\qquad(\xi\in\Sph^d,\ 0<r,r'\le r_0);
\]
\item\label{H3} \emph{(convergence of the spherical means)}\quad there are $m_0\in\R$ and a nondecreasing $\mu\colon(0,r_0]\to[0,\infty)$ with $\mu(0^+)=0$ such that $|m(r)-m_0|\le\mu(r)$ for $0<r\le r_0$.
\end{enumerate}
The case $\lambda=0$ of \ref{H2} is the plain uniform radial H\"older condition; the case $\lambda>0$ allows the radial modulus to degenerate as the centre is approached, at a polynomial rate. Note that no continuity of $f$ at $x_0$ is assumed, only along rays near $x_0$; $m_0$ is the natural spherical mean value at the centre.

Let $g\in C^1([0,\infty))$ with $0<g\le r_0$, $g'\le0$, $g(t)\to0$, and let
\[
\Gamma(t)=x_0+g(t)\,\Phi_d(\theta_0+t\omega)
\]
be the associated shrinking spiral.

\begin{remark}[The trajectory is not a continuous curve]\label{rem:notcurve}
Since $\Phi_d$ is discontinuous across the polar cut, $\Gamma$ is not a continuous curve: at each time at which some coordinate $\theta_m(t)=\theta_{0,m}+\omega_mt$, $m\ge2$, crosses $0$, the point $\Gamma(t)$ jumps between the two one-sided positions described in Remark~\ref{rem:cut}(c), scaled by $g(t)$. These positions are reflections of one another in the corresponding coordinate hyperplane; they are antipodal only when the remaining latitudes are at their equators (and always when $d=2$). Such times form a discrete, hence Lebesgue-null, set, so none of the time averages below is affected, and we use the word ``spiral'' for the resulting Borel trajectory: between two consecutive jumps it is a genuine spiral arc sweeping the latitudes monotonically from pole to pole. The reader who prefers a continuous curve may replace $\Phi_d$ by the variant $\widetilde\Phi_d$ of Remark~\ref{rem:continuousparam}, which traverses each latitude twice and produces a continuous trajectory: by Corollary~\ref{cor:tilde} the input estimate of Theorem~\ref{thm:main} holds verbatim for $\widetilde\Phi_d$, so every result of this section holds for the corresponding continuous spiral with the same constants and exponents, the proofs using no property of $\Phi_d$ beyond Theorem~\ref{thm:main} and the measure preservation of Lemma~\ref{lem:push}.
\end{remark}

\subsection*{The non-degenerate range $\lambda<\kappa$: no block decomposition is needed}

It is worth isolating at the outset a consequence of \ref{H2} which, when $\lambda<\kappa$, makes the whole problem collapse onto Theorem~\ref{thm:main}. The reason is that in that range \ref{H2} forces the family $(f_r)$ to be uniformly Cauchy as $r\to0$, so that the angular profile of $f$ has a limit at the centre in the uniform norm, and the spiral average may be compared with that single limiting profile once and for all.

\begin{lemma}[The limiting angular profile]\label{lem:f0}
Assume \ref{H1} and \ref{H2} with $\lambda<\kappa$, and set $\nu=\kappa-\lambda\in(0,1]$. Then there is $f_0\in C^s(\Sph^d)$ such that
\[
\|f_r-f_0\|_{\infty}\le C_\nu\,B\,r^{\nu}\qquad(0<r\le r_0),
\qquad C_\nu=\frac{2^{\nu}+1}{1-2^{-\nu}},
\]
and moreover $\|f_0\|_{C^s(\Sph^d)}\le A$. Consequently \ref{H3} holds with $m_0=\langle f_0\rangle$ and $\mu(r)=C_\nu Br^{\nu}$.
\end{lemma}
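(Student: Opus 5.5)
The plan is a dyadic Cauchy argument in the uniform norm, followed by passage to the limit in the Hölder seminorm and in the spherical mean. The constant $C_\nu=(2^\nu+1)/(1-2^{-\nu})$ suggests exactly this: a geometric series in dyadic radii, plus one comparison between an arbitrary radius and the nearest dyadic radius.

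\emph{Dyadic chain.} Put $r_j=2^{-j}r_0$ for $j\ge0$. By \ref{H2}, since $\min(r_j,r_{j+1})=r_{j+1}$ and $|r_j-r_{j+1}|=r_{j+1}$,
\[
\|f_{r_j}-f_{r_{j+1}}\|_\infty\le B\,r_{j+1}^{\kappa-\lambda}=B\,r_{j+1}^{\nu}.
\]
These increments sum geometrically because $\nu>0$, so $(f_{r_j})$ is Cauchy in $C(\Sph^d)$ and has a uniform limit $f_0$. The tail bound is
\[
\|f_{r_j}-f_0\|_\infty\le B\sum_{i>j}r_i^\nu=\frac{2^{-\nu}}{1-2^{-\nu}}\,B\,r_j^\nu\le\frac{1}{1-2^{-\nu}}\,B\,r_j^\nu .
\]

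\emph{Arbitrary radii.} For $0<r\le r_0$, choose $j$ with $r_{j+1}<r\le r_j$. Apply \ref{H2} to the pair $(r,r_j)$. Here $|r-r_j|\le r_j-r_{j+1}=r_{j+1}$ and $\min(r,r_j)=r>r_{j+1}$. Since $\lambda\ge0$, this gives
\[
\|f_r-f_{r_j}\|_\infty\le B\,r_{j+1}^{\kappa}\,r^{-\lambda}\le B\,r_{j+1}^{\kappa-\lambda}\le B\,r^\nu .
\]
Combining with the tail bound and using $r_j<2r$,
\[
\|f_r-f_0\|_\infty\le B\,r^\nu+\frac{2^\nu}{1-2^{-\nu}}\,B\,r^\nu\le C_\nu B\,r^\nu .
\]
The only delicate point is this bookkeeping with $r^{-\lambda}$ against $r_{j+1}^{-\lambda}$, and the sign of $\lambda$ makes it harmless. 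It gives a constant no worse than the stated $C_\nu$.

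\emph{Regularity and means.} Uniform convergence $f_r\to f_0$ along $r\to0$ gives pointwise convergence. Hence $|f_0(\xi)|\le\limsup\|f_r\|_\infty$, and
\[
|f_0(\xi)-f_0(\eta)|=\lim_{r\to0}|f_r(\xi)-f_r(\eta)|\le\limsup_{r\to0}[f_r]_s\,|\xi-\eta|^s .
\]
This yields $\|f_0\|_{C^s}\le A$, provided one checks that the sum of the limsups of the sup norm and the seminorm is controlled by $\sup_r(\|f_r\|_\infty+[f_r]_s)=A$. To get this, take a single sequence $r\to0$ and bound $|f_0(\xi)|+|f_0(\xi')-f_0(\eta')|/|\xi'-\eta'|^s$ by the corresponding quantity for $f_r$ in the limit. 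Finally, integrating against $\sigma_d$ gives $|m(r)-\langle f_0\rangle|\le\|f_r-f_0\|_\infty\le C_\nu Br^\nu$. So \ref{H3} holds with $m_0=\langle f_0\rangle$ and $\mu(r)=C_\nu Br^\nu$, which is nondecreasing and tends to $0$.
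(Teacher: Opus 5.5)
Your proposal is correct and follows the paper's proof essentially step for step. It uses the same dyadic chain $r_j=2^{-j}r_0$ with the same geometric tail bound, the same comparison of an arbitrary $r\in(r_{j+1},r_j]$ with $r_j$ via \ref{H2} and $r_j<2r$, and the same passage to the limit in the $C^s$ norm and in the spherical means. The differences are cosmetic. You route the comparison through $r_{j+1}^{\nu}\le r^{\nu}$, whereas the paper uses $|r-r_j|<r$ directly. You also spell out the lower semicontinuity of $\|\cdot\|_{C^s}$ under uniform convergence, handling the sup norm and the seminorm jointly, which the paper only invokes.
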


\begin{proof}
Put $\varrho_j=2^{-j}r_0$. By \ref{H2},
\[
\|f_{\varrho_j}-f_{\varrho_{j+1}}\|_\infty\le B\,\frac{(\varrho_{j+1})^{\kappa}}{(\varrho_{j+1})^{\lambda}}=B\,\varrho_{j+1}^{\,\nu},
\]
and $\sum_j\varrho_{j+1}^{\,\nu}<\infty$, so $(f_{\varrho_j})$ converges uniformly to some $f_0$, with
\[
\|f_{\varrho_j}-f_0\|_\infty\le B\sum_{i\ge j}\varrho_{i+1}^{\,\nu}
=B\,\varrho_j^{\,\nu}\,\frac{2^{-\nu}}{1-2^{-\nu}}\le\frac{B\,\varrho_j^{\,\nu}}{1-2^{-\nu}} .
\]
Given $0<r\le r_0$ choose $j\ge0$ with $\varrho_{j+1}<r\le\varrho_j$. Then $|\varrho_j-r|\le\varrho_{j+1}<r$ and $\min(r,\varrho_j)=r$, so \ref{H2} gives $\|f_r-f_{\varrho_j}\|_\infty\le Br^{\kappa-\lambda}=Br^{\nu}$, while $\varrho_j<2r$ gives $\|f_{\varrho_j}-f_0\|_\infty\le\frac{B(2r)^{\nu}}{1-2^{-\nu}}$. Adding the two proves the displayed bound. Since $\|f_r\|_{C^s}\le A$ for every $r$ by \ref{H1}, and the $C^s$ norm is lower semicontinuous under uniform convergence, $\|f_0\|_{C^s}\le A$. Finally $|m(r)-\langle f_0\rangle|=|\langle f_r-f_0\rangle|\le\|f_r-f_0\|_\infty$.
\end{proof}

\begin{theorem}[Quantitative spiral means, non-degenerate case]\label{thm:spiral}
Let $d\ge2$, $\omega\in\mathcal D(\gamma,\tau)$, and assume \ref{H1}--\ref{H2} with $\lambda<\kappa$; write $\nu=\kappa-\lambda$, let $f_0$ be the limiting profile of Lemma~\ref{lem:f0}, set $m_0:=\langle f_0\rangle$, and let $\vartheta$, $C$ be as in Theorem~\ref{thm:main}. Then for every $T>0$,
\[
\Bigl|\frac1T\int_0^T f(\Gamma(t))\,dt-m_0\Bigr|
\le C\,A\,T^{-\vartheta}+C_\nu\,B\,\frac1T\int_0^T g(t)^{\nu}\,dt .
\]
\end{theorem}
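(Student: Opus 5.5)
The plan is to compare the spiral average with the single limiting profile $f_0$ supplied by Lemma~\ref{lem:f0}, and to split the error into a radial part and an angular part:
\[
\frac1T\int_0^T f(\Gamma(t))\,dt-m_0
=\frac1T\int_0^T\bigl(f_{g(t)}-f_0\bigr)\bigl(\Phi_d(\theta_0+t\omega)\bigr)\,dt
+\Bigl[\frac1T\int_0^T f_0\bigl(\Phi_d(\theta_0+t\omega)\bigr)\,dt-\langle f_0\rangle\Bigr].
\]
This uses the identity $f(\Gamma(t))=f_{g(t)}(\Phi_d(\theta_0+t\omega))$, which follows from the definition $f_r(\xi)=f(x_0+r\xi)$ together with $0<g(t)\le r_0$. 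It also uses $m_0=\langle f_0\rangle$, which holds by the definition in the statement. Measurability of $t\mapsto f(\Gamma(t))$ is not an issue. The map $\Phi_d$ is Borel. Moreover, $(r,\xi)\mapsto f_r(\xi)$ is jointly continuous on $(0,r_0]\times\Sph^d$: it is continuous in $\xi$ by \ref{H1}, and continuous in $r$ locally uniformly in $\xi$ by \ref{H2}.

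\textbf{Radial term.} For each $t$, Lemma~\ref{lem:f0} gives the pointwise bound $|(f_{g(t)}-f_0)(\xi)|\le C_\nu B\,g(t)^{\nu}$, uniformly in $\xi\in\Sph^d$. Integrating in $t$ bounds the first term by $C_\nu B\,T^{-1}\int_0^T g(t)^\nu\,dt$. No arithmetic input on $\omega$ is needed for this term, and no block decomposition either, because the bound holds at every time.

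\textbf{Angular term.} The second term is exactly $A_{0,T}(f_0)-\langle f_0\rangle$ for the fixed observable $f_0\in C^s(\Sph^d)$. Theorem~\ref{thm:main}, applied with $t_0=0$, bounds it by $C\|f_0\|_{C^s}T^{-\vartheta}$. Lemma~\ref{lem:f0} gives $\|f_0\|_{C^s}\le A$, so this term is at most $CAT^{-\vartheta}$. Adding the radial and angular bounds gives the claim for every $T>0$.

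The only point needing care is the passage from the angular profiles $f_r$ to $f_0$ in the $C^s$ norm. This step relies on the lower semicontinuity already established in Lemma~\ref{lem:f0}, so there is no real obstacle. The theorem is a direct corollary of Lemma~\ref{lem:f0} and Theorem~\ref{thm:main}, and the constants are inherited unchanged from those two results.
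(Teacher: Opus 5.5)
Your proposal is correct and follows the paper's proof essentially verbatim: you make the same split into the radial term $\frac1T\int_0^T(f_{g(t)}-f_0)(\xi_t)\,dt$, which you bound pointwise by Lemma~\ref{lem:f0}, and the angular term for the single observable $f_0$, which you handle with Theorem~\ref{thm:main} and $\|f_0\|_{C^s}\le A$. Your extra remark on the joint continuity of $(r,\xi)\mapsto f_r(\xi)$, which gives measurability of $t\mapsto f(\Gamma(t))$, is a correct detail that the paper leaves implicit.
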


\begin{proof}
Write $\xi_t=\Phi_d(\theta_0+t\omega)$, so that $f(\Gamma(t))=f_{g(t)}(\xi_t)$, and split
\[
\frac1T\int_0^Tf(\Gamma(t))\,dt-m_0
=\frac1T\int_0^T\bigl(f_{g(t)}-f_0\bigr)(\xi_t)\,dt
+\Bigl(\frac1T\int_0^Tf_0(\xi_t)\,dt-\langle f_0\rangle\Bigr),
\]
using $m_0=\langle f_0\rangle$ from Lemma~\ref{lem:f0}. The first term is bounded by $\frac1T\int_0^T\|f_{g(t)}-f_0\|_\infty\,dt\le C_\nu B\frac1T\int_0^Tg(t)^{\nu}\,dt$, again by Lemma~\ref{lem:f0}. The second is bounded by $CAT^{-\vartheta}$ by Theorem~\ref{thm:main} applied to the single observable $f_0\in C^s(\Sph^d)$, whose norm is at most $A$.
\end{proof}

\begin{corollary}[Power-law profiles]\label{cor:power}
In the setting of Theorem~\ref{thm:spiral} let $g(t)=r_0(1+t)^{-b}$ with $b>0$. Then for $T\ge1$
\[
\Bigl|\frac1T\int_0^Tf(\Gamma(t))\,dt-m_0\Bigr|
\le C\,A\,T^{-\vartheta}+C'\,B\,r_0^{\nu}\,T^{-\min(b\nu,1)}\log(2+T),
\]
with $C=C(d,\gamma,\tau,s)$ and $C'=C'(\kappa,\lambda,b)$; the logarithm is needed only when $b\nu=1$.
\end{corollary}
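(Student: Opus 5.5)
The plan is to apply Theorem~\ref{thm:spiral} directly. Its first term $C\,A\,T^{-\vartheta}$ already has the required form, so all the work is in the radial mean $\frac1T\int_0^T g(t)^{\nu}\,dt$ for $g(t)=r_0(1+t)^{-b}$. Substituting gives
\[
\frac1T\int_0^T g(t)^{\nu}\,dt=\frac{r_0^{\nu}}{T}\int_0^T(1+t)^{-b\nu}\,dt ,
\]
and this is an elementary integral. We treat three cases according to whether $b\nu<1$, $b\nu=1$ or $b\nu>1$.

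\emph{Case $b\nu<1$.} The integral equals $\frac{(1+T)^{1-b\nu}-1}{1-b\nu}\le\frac{(2T)^{1-b\nu}}{1-b\nu}$ for $T\ge1$. After dividing by $T$ we obtain $C'\,T^{-b\nu}$ with $C'=2/(1-b\nu)$. This depends only on $(\kappa,\lambda,b)$, since $\nu=\kappa-\lambda$.

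\emph{Case $b\nu=1$.} The integral is $\log(1+T)$, which yields $r_0^{\nu}T^{-1}\log(2+T)$. This is the only case in which the logarithm is genuinely needed.

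\emph{Case $b\nu>1$.} The integral is bounded by $\frac1{b\nu-1}$, giving $C'\,T^{-1}$.

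In all three cases the bound is at most $C'\,r_0^{\nu}\,T^{-\min(b\nu,1)}\log(2+T)$, because $\log(2+T)\ge\log3>1$. We then multiply by the factor $C_\nu B$ from Theorem~\ref{thm:spiral} and absorb $C_\nu$ into $C'$. Since $C_\nu$ depends only on $\nu$, the constant $C'$ remains a function of $(\kappa,\lambda,b)$ alone.

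There is no real obstacle in this argument. The only points needing care are to record that the constants blow up as $b\nu\to1$, which is harmless because $b,\kappa,\lambda$ are fixed, and to use the restriction $T\ge1$ so that $(1+T)\le 2T$ in the first case.
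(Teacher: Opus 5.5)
Your proposal is correct and follows the paper's proof exactly: you insert the power-law profile into Theorem~\ref{thm:spiral} and evaluate $\frac{r_0^{\nu}}{T}\int_0^T(1+t)^{-b\nu}\,dt$ in the three regimes $b\nu<1$, $b\nu=1$, $b\nu>1$. Your explicit constants, and the observation that $\log(2+T)\ge\log3>1$ absorbs the two non-logarithmic cases into the single displayed bound, simply make precise what the paper states as $O(\cdot)$ estimates.
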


\begin{proof}
$\frac1T\int_0^Tg^{\nu}=\frac{r_0^{\nu}}T\int_0^T(1+t)^{-b\nu}\,dt$, which is $O\bigl(r_0^{\nu}T^{-b\nu}\bigr)$ if $b\nu<1$, $O\bigl(r_0^{\nu}T^{-1}\log(2+T)\bigr)$ if $b\nu=1$, and $O\bigl(r_0^{\nu}T^{-1}\bigr)$ if $b\nu>1$. Insert this in Theorem~\ref{thm:spiral}.
\end{proof}

\begin{remark}[Why the radius costs nothing here]\label{rem:noblocks}
Two features of Corollary~\ref{cor:power} should be emphasized, because a block decomposition of the time axis --- freezing the radius on mesoscopic windows $[jL,(j+1)L)$, applying Theorem~\ref{thm:main} on each of them, and optimizing in $L$ --- would produce a strictly weaker statement. First, the angular exponent is the full $\vartheta$ of Theorem~\ref{thm:main}: there is no degradation of the form $\vartheta/(1+\kappa+\vartheta)$, because the radial and the angular errors are decoupled by Lemma~\ref{lem:f0} instead of being balanced against each other through a mesoscopic scale. Second, no relation between $\kappa$, $\lambda$ and $b$ is required: the profile may shrink arbitrarily slowly. The reason is the one identified in Lemma~\ref{lem:f0}: as soon as $\lambda<\kappa$, hypothesis \ref{H2} is strong enough to produce a \emph{single} limiting angular profile $f_0$, uniformly on $\Sph^d$, and the spiral average need only be compared with the average of that one observable. Only the plain window $[0,T]$ is used, so the window uniformity of Theorem~\ref{thm:main} is not needed in this range either. It becomes necessary exactly when no such limiting profile exists, which is the subject of the next subsection.
\end{remark}

\subsection*{The degenerate range $\lambda\ge\kappa$: mesoscopic blocks}

When $\lambda\ge\kappa$ the chaining of Lemma~\ref{lem:f0} diverges and no limiting angular profile need exist. A concrete example on $\Sph^d$: take $f(x_0+r\xi)=a(\xi)\cos\log\frac1r$ with $a\in C^s(\Sph^d)$ and $\langle a\rangle=0$. Then $m\equiv0$, so \ref{H3} holds with $\mu\equiv0$, and \ref{H2} holds with $\kappa=\lambda=1$ because $|\cos\log\frac1r-\cos\log\frac1{r'}|\le|\log\frac r{r'}|\le\frac{|r-r'|}{\min(r,r')}$; but $f_r=a\cos\log\frac1r$ has no uniform limit as $r\to0$ unless $a\equiv0$. In this range the radius genuinely has to be tracked, and it is here that the window uniformity of Theorem~\ref{thm:main} is used.

\begin{theorem}[Quantitative spiral means, degenerate case]\label{thm:spiralblock}
Let $d\ge2$, $\omega\in\mathcal D(\gamma,\tau)$, and assume \ref{H1}, \ref{H2} and \ref{H3}. Let $\vartheta$ be as in Theorem~\ref{thm:main}. There is $C=C(d,\gamma,\tau,s)<\infty$ such that for every $T\ge1$ and every block length $1\le L\le T$, writing $J=\lfloor T/L\rfloor$, $I_j=[jL,(j+1)L)$, $g_j=g(jL)$ and $\Delta_j=g(jL)-g((j+1)L)\ge0$,
\begin{align*}
\Bigl|\frac1T\int_0^T f(\Gamma(t))\,dt-m_0\Bigr|
\le C\Bigl[&A\,L^{-\vartheta}+\bigl(A+\mu(r_0)\bigr)\frac LT\\
&+\frac{BL}T\sum_{j=0}^{J-1}\frac{\Delta_j^{\kappa}}{g\bigl((j+1)L\bigr)^{\lambda}}
+\frac1T\int_0^T \mu\bigl(g(t)\bigr)\,dt\Bigr].
\end{align*}
\end{theorem}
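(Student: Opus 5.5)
The plan is a block decomposition with the radius frozen on each block. Write $\xi_t=\Phi_d(\theta_0+t\omega)$, so that $f(\Gamma(t))=f_{g(t)}(\xi_t)$. Split $[0,T]$ into the $J$ full blocks $I_j=[jL,(j+1)L)$ and the remainder $[JL,T]$, which has length $<L$. On the remainder I would bound the integrand crudely: $|f_{g(t)}(\xi_t)-m_0|\le\|f_{g(t)}\|_\infty+|m(g(t))|+|m(g(t))-m_0|\le 2A+\mu(r_0)$. Here I use \ref{H1}, $|m(r)|\le\|f_r\|_\infty\le A$, and the monotonicity of $\mu$. This piece contributes $(2A+\mu(r_0))L/T$.

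On each full block I would use the four-term decomposition
\[
f_{g(t)}(\xi_t)-m_0=\bigl(f_{g(t)}-f_{g_j}\bigr)(\xi_t)+\bigl(f_{g_j}(\xi_t)-m(g_j)\bigr)+\bigl(m(g_j)-m(g(t))\bigr)+\bigl(m(g(t))-m_0\bigr).
\]
\emph{First term.} For $t\in I_j$, monotonicity of $g$ gives $g((j+1)L)\le g(t)\le g_j$. So \ref{H2} yields $|f_{g(t)}-f_{g_j}|\le B\,\Delta_j^{\kappa}\,g((j+1)L)^{-\lambda}$. Integrating over $I_j$ and dividing by $T$ produces exactly the term $\frac{BL}{T}\sum_j\Delta_j^\kappa g((j+1)L)^{-\lambda}$.
\emph{Third term.} Since $m(r)-m(r')=\langle f_r-f_{r'}\rangle$, the same uniform bound applies, giving a second copy of that sum.
\emph{Fourth term.} It is pointwise at most $\mu(g(t))$ by \ref{H3}; summed over blocks (and the remainder already handled) it gives $\frac1T\int_0^T\mu(g(t))\,dt$.
\emph{Second term.} This is the only place where arithmetic enters. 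The observable $f_{g_j}\in C^s(\Sph^d)$ is frozen, so Theorem~\ref{thm:main} applies with window start $t_0=jL$ and length $L$, using the window uniformity of Remark~\ref{rem:window} and the same $\theta_0$. It gives $\bigl|\frac1L\int_{I_j}f_{g_j}(\xi_t)\,dt-\langle f_{g_j}\rangle\bigr|\le C A L^{-\vartheta}$. Summing, $\frac1T\sum_j L\cdot CAL^{-\vartheta}\le CAL^{-\vartheta}$ because $JL\le T$.

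Collecting the terms gives the stated bound, with the constant $C$ of Theorem~\ref{thm:main} enlarged by an absolute factor. The points to verify carefully are:
\begin{itemize}
\item the constant in Theorem~\ref{thm:main} is uniform in the window position and depends on $\varphi$ only through $\|\varphi\|_{C^s}\le A$, which is what allows one $C$ for all blocks;
\item the hypothesis $T\ge L$ ensures $J\ge1$ and $(T-JL)/T\le L/T$.
\end{itemize}
I expect no real obstacle. The main care is in the radial-freezing step: \ref{H2} must be applied with the minimum radius $g((j+1)L)$, which is the smaller endpoint, rather than $g_j$, so that the degeneration factor $g((j+1)L)^{-\lambda}$ matches the statement.
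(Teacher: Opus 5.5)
Your proposal is correct and follows the paper's own argument: the same block decomposition, freezing the radius with (H2) at the smaller endpoint radius $g((j+1)L)$, and Theorem~\ref{thm:main} applied on each translated window $I_j$. The only difference is in the spherical-mean term. The paper bounds $|m(g_j)-m_0|\le\mu(g_j)$ and then uses the monotonicity of $g$ and $\mu$ to get $L\mu(g_j)\le\int_{(j-1)L}^{jL}\mu(g(t))\,dt$, treating $j=0$ separately. You instead pass through $m(g(t))$ by a second application of (H2), which costs only a factor $2$ on the $B$-sum and gives the term $\frac1T\int_0^T\mu(g(t))\,dt$ pointwise.
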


\begin{proof}
Split $[0,T]=\bigcup_{j=0}^{J-1}I_j\cup[JL,T)$. The remainder interval has length $<L$ and $\|f\|_{L^\infty}\le A$ on the range of $\Gamma$ by \ref{H1}, so it contributes at most $AL/T$ to the average; and $|m_0|\le|m(r_0)|+\mu(r_0)\le A+\mu(r_0)$, so replacing $m_0$ by $\frac{JL}Tm_0$ costs at most $(A+\mu(r_0))L/T$.

Fix $j$. For $t\in I_j$ one has $g((j+1)L)\le g(t)\le g_j$ by monotonicity, so $\min(g(t),g_j)=g(t)\ge g((j+1)L)$ and $|g(t)-g_j|\le\Delta_j$; hence \ref{H2} gives
\[
\Bigl|\frac1L\int_{I_j}f(\Gamma(t))\,dt-\frac1L\int_{I_j}f_{g_j}\bigl(\Phi_d(\theta_0+t\omega)\bigr)\,dt\Bigr|
\le B\,\frac{\Delta_j^{\kappa}}{g((j+1)L)^{\lambda}} .
\]
By \ref{H1}, $f_{g_j}\in C^s(\Sph^d)$ with norm at most $A$, so Theorem~\ref{thm:main}, applied on the window $I_j$ --- this is where the window uniformity is used --- gives
\[
\Bigl|\frac1L\int_{I_j}f_{g_j}\bigl(\Phi_d(\theta_0+t\omega)\bigr)\,dt-m(g_j)\Bigr|\le C\,A\,L^{-\vartheta},
\]
while $|m(g_j)-m_0|\le\mu(g_j)$ by \ref{H3}. Averaging over blocks,
\[
\Bigl|\frac1T\int_0^{JL}f(\Gamma)\,dt-\frac{JL}Tm_0\Bigr|
\le\frac LT\sum_{j=0}^{J-1}\Bigl[B\frac{\Delta_j^{\kappa}}{g((j+1)L)^{\lambda}}+CAL^{-\vartheta}+\mu(g_j)\Bigr].
\]
For the last sum, since $g$ is nonincreasing and $\mu$ nondecreasing, $L\mu(g_j)\le\int_{(j-1)L}^{jL}\mu(g(t))\,dt$ for $j\ge1$, while the $j=0$ term is at most $L\mu(r_0)$; hence $\frac LT\sum_j\mu(g_j)\le\frac1T\int_0^T\mu(g)+\frac{\mu(r_0)L}T$. Collecting terms proves the claim.
\end{proof}

\begin{corollary}[Power-law profiles, degenerate case]\label{cor:powerdeg}
In the setting of Theorem~\ref{thm:spiralblock}, assume in addition that $\lambda\ge\kappa$, let $g(t)=r_0(1+t)^{-b}$ with $b>0$, and set
\[
\beta:=\kappa(b+1)-b\lambda=\kappa+b(\kappa-\lambda)\le\kappa\le1.
\]
Assume $\beta>0$. If $0<\beta<1$, then for $T\ge1$,
\begin{align*}
\Bigl|\frac1T\int_0^Tf(\Gamma(t))\,dt-m_0\Bigr|
\le\ &C\bigl(A+\mu(r_0)+B\,r_0^{\kappa-\lambda}\bigr)\,
T^{-\frac{\vartheta\beta}{\kappa+\vartheta}}\\
&+\frac CT\int_0^T\mu\bigl(g(t)\bigr)\,dt.
\end{align*}
If $\beta=1$ (which in the degenerate range forces $\kappa=\lambda=1$), then
\begin{align*}
\Bigl|\frac1T\int_0^Tf(\Gamma(t))\,dt-m_0\Bigr|
\le\ &C\bigl(A+\mu(r_0)+B\bigr)\,
\Bigl(\frac{\log(2+T)}T\Bigr)^{\frac{\vartheta}{1+\vartheta}}\\
&+\frac CT\int_0^T\mu\bigl(g(t)\bigr)\,dt.
\end{align*}
Here $C=C(d,\gamma,\tau,s,\kappa,\lambda,b)$.
\end{corollary}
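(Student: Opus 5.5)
The plan is to feed the power law $g(t)=r_0(1+t)^{-b}$ into Theorem~\ref{thm:spiralblock}, estimate the radial-freezing sum explicitly, and then choose the block length $L$ as a power of $T$ that balances $AL^{-\vartheta}$ against that sum.

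\emph{The first block.} The term $j=0$ of the radial sum is awkward. There $g(L)^{-\lambda}=r_0^{-\lambda}(1+L)^{b\lambda}$ can be large, while $\Delta_0$ need not be small. I would therefore rerun the proof of Theorem~\ref{thm:spiralblock} with the block $I_0$ bounded trivially, exactly as the remainder interval $[JL,T)$ is bounded there. This costs an extra $2AL/T$, which is of the same form as a term already present. So it suffices to bound
\[
\frac{BL}{T}\sum_{j=1}^{J-1}\frac{\Delta_j^\kappa}{g((j+1)L)^\lambda}.
\]

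\emph{The blocks $j\ge1$.} For $j\ge1$ the mean value theorem gives $\Delta_j\le bL\,r_0(1+jL)^{-b-1}$. Since $1+(j+1)L\le 2(1+jL)$, we also have $g((j+1)L)\ge 2^{-b}r_0(1+jL)^{-b}$. Each summand is therefore at most
\[
C\,r_0^{\kappa-\lambda}L^\kappa(1+jL)^{-\kappa(b+1)+b\lambda}=C\,r_0^{\kappa-\lambda}L^\kappa(1+jL)^{-\beta}.
\]
Because $\beta>0$, the factor $(1+jL)^{-\beta}$ is decreasing in $j$. This lets us compare $L\sum_{j\ge1}(1+jL)^{-\beta}$ with $\int_0^T(1+t)^{-\beta}\,dt$, which is $\le CT^{1-\beta}/(1-\beta)$ if $\beta<1$ and $\le\log(2+T)$ if $\beta=1$. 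The radial term is thus at most
\[
C\,B\,r_0^{\kappa-\lambda}L^\kappa T^{-\beta}\quad(0<\beta<1),
\qquad
C\,B\,L\,T^{-1}\log(2+T)\quad(\beta=1,\ \text{where }\kappa=1).
\]
The relation $\beta\le\kappa\le1$ is immediate from $\lambda\ge\kappa$. When $\beta=1$ it forces $\kappa=1$, and then $b(\kappa-\lambda)=0$ gives $\lambda=1$.

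\emph{Choosing $L$.} Collecting terms, for $0<\beta<1$ the error is bounded by
\[
C\Bigl[A L^{-\vartheta}+\bigl(A+\mu(r_0)\bigr)\frac{L}{T}+B r_0^{\kappa-\lambda}L^\kappa T^{-\beta}\Bigr]+\frac{C}{T}\int_0^T\mu(g(t))\,dt .
\]
I would take $L=T^{\beta/(\kappa+\vartheta)}$, which equates $L^{-\vartheta}$ and $L^\kappa T^{-\beta}$ at $T^{-\vartheta\beta/(\kappa+\vartheta)}$. It remains to check that $L/T$ is no larger. This requires
\[
\beta(1+\vartheta)\le\kappa+\vartheta,
\]
which holds since $\beta\le\kappa$ and $\beta\le1$. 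The constraint $1\le L\le T$ holds because $0<\beta/(\kappa+\vartheta)<1$. For $\beta=1$ I would take $L=(T/\log(2+T))^{1/(1+\vartheta)}$, possibly adjusted so that $L\ge1$, with small $T$ absorbed into $C$. This balances $L^{-\vartheta}$ against $L\log(2+T)/T$ and gives the rate $(\log(2+T)/T)^{\vartheta/(1+\vartheta)}$, while $L/T$ is again of lower order.

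\emph{Main obstacle.} The only delicate step is the treatment of the first block together with the sum–integral comparison, i.e.\ making sure the singular factor $g((j+1)L)^{-\lambda}$ near $j=0$ does not spoil the rate. Bounding $I_0$ trivially is the cleanest fix. The factor $1/(1-\beta)$ and the constants coming from $b$ and $2^{b}$ are absorbed into $C(d,\gamma,\tau,s,\kappa,\lambda,b)$.
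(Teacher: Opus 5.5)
Your proposal is correct and matches the paper's proof essentially step for step: the paper also discards $I_0$ together with the terminal remainder, bounds the blocks $j\ge1$ via $\Delta_j\le r_0bL(1+jL)^{-b-1}$ and $1+(j+1)L\le2(1+jL)$, and makes the same choices $L=T^{\beta/(\kappa+\vartheta)}$ and $L=(T/\log(2+T))^{1/(1+\vartheta)}$. The differences are cosmetic. You compare the block sum with $\int_0^T(1+t)^{-\beta}\,dt$ instead of quoting $\sum_{j\ge1}(1+jL)^{-\beta}\le C_\beta L^{-\beta}J^{1-\beta}$; and dropping $I_0$ also shifts the $m_0$-normalization, so the extra cost is $(A+\mu(r_0))L/T$ rather than $2AL/T$, which is equally harmless.
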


\begin{proof}
We repeat the block proof of Theorem~\ref{thm:spiralblock}, with one harmless refinement: instead of freezing the radius on the first block $I_0=[0,L)$, discard that block together with the terminal remainder. Their combined contribution, including the corresponding portion of $m_0$, is $O((A+\mu(r_0))L/T)$. On every remaining block $I_j$, $j\ge1$, one has
\[
\Delta_j\le r_0bL(1+jL)^{-b-1}
\]
and, since $L\le jL\le1+jL$,
\[
1+(j+1)L=1+jL+L\le2(1+jL).
\]
Therefore
\[
\frac{\Delta_j^{\kappa}}{g((j+1)L)^{\lambda}}
\le C(b,\kappa,\lambda)\,r_0^{\kappa-\lambda}\,
L^{\kappa}(1+jL)^{-\beta}.
\]
Consequently the radial-freezing contribution of the blocks $j\ge1$ is bounded by
\[
C B r_0^{\kappa-\lambda}\frac{L^{1+\kappa}}T
\sum_{j=1}^{J-1}(1+jL)^{-\beta}.
\]
For completeness, the elementary sum has the three regimes
\[
\sum_{j=1}^{J-1}(1+jL)^{-\beta}\le C_\beta
\begin{cases}
L^{-\beta}J^{1-\beta},&\beta<1,\\
L^{-1}\log(2+J),&\beta=1,\\
L^{-\beta},&\beta>1.
\end{cases}
\]
(The last case cannot occur when $\lambda\ge\kappa$ and $0<\kappa\le1$.) Thus, when $0<\beta<1$, the freezing error is at most
\[
C B r_0^{\kappa-\lambda}L^{\kappa}T^{-\beta}.
\]
Choose $L=T^{\beta/(\kappa+\vartheta)}$. Since $0<\beta\le\kappa<\kappa+\vartheta$, one has $1\le L\le T$, as required in Theorem~\ref{thm:spiralblock}, and
\[
L^{-\vartheta}+L^{\kappa}T^{-\beta}
\le C T^{-\frac{\vartheta\beta}{\kappa+\vartheta}},
\qquad
\frac LT\le C T^{-\frac{\vartheta\beta}{\kappa+\vartheta}}.
\]
The last inequality follows from $\kappa+\vartheta\ge\beta(1+\vartheta)$, which uses $\beta\le\kappa$ and $\beta\le1$. Inserting these bounds into the block proof gives the first assertion.

If $\beta=1$, then $1=\beta\le\kappa\le1$, hence $\kappa=1$, and $1=\kappa+b(\kappa-\lambda)$ with $b>0$ and $\lambda\ge\kappa$ forces $\lambda=1$. The freezing error is then at most
\[
C B\,L\,\frac{\log(2+T)}T.
\]
For $T\ge2$, take
\[
L=\Bigl(\frac{T}{\log(2+T)}\Bigr)^{1/(1+\vartheta)}.
\]
Then $1\le L\le T$, and this choice balances the freezing term with $AL^{-\vartheta}$ and gives
\[
L^{-\vartheta}+L\frac{\log(2+T)}T
\le C\Bigl(\frac{\log(2+T)}T\Bigr)^{\frac{\vartheta}{1+\vartheta}},
\]
while $L/T$ is smaller. The bounded range $1\le T<2$ is absorbed by enlarging the constant, using the trivial bound on the left-hand side. The estimate for the spherical-mean terms is unchanged from Theorem~\ref{thm:spiralblock}, which completes the proof.
\end{proof}

\begin{remark}[Consistency and the cost of the blocks]\label{rem:blockcost}
Theorem~\ref{thm:spiralblock} itself is valid on both sides of the threshold $\lambda=\kappa$, but Corollary~\ref{cor:powerdeg} is stated only in the range where the block decomposition is actually needed. In that degenerate range
\[
\beta=\kappa+b(\kappa-\lambda)\le\kappa\le1,
\]
so the previously tempting summability condition $\beta>1$ can never occur. The correct power-law optimization uses the blocks $j\ge1$ and discards the initial block: for $0<\beta<1$ it gives the exponent $\vartheta\beta/(\kappa+\vartheta)$, while the boundary case $\beta=1$ (necessarily $\kappa=\lambda=1$) carries the logarithmic endpoint factor displayed in Corollary~\ref{cor:powerdeg}. If $\beta\le0$, the general estimate of Theorem~\ref{thm:spiralblock} remains available, but this block argument does not by itself force the radial-freezing error to vanish. By contrast, when $\lambda<\kappa$ Lemma~\ref{lem:f0} produces a single limiting angular profile and Corollary~\ref{cor:power} gives the full angular exponent $\vartheta$ with no compatibility condition on $(\kappa,\lambda,b)$. Thus the block scheme should be viewed as a tool for the genuinely degenerate range, not as a competitor to the limiting-profile argument. We do not know whether the exponents furnished by the block scheme are optimal there.
\end{remark}

\begin{remark}[Purely angular data]\label{rem:angular}
If $f(x_0+r\xi)=\varphi(\xi)$ does not depend on $r$ (i.e.\ \ref{H2} holds with $B=0$, and \ref{H3} with $\mu\equiv0$), then $f(\Gamma(t))=\varphi(\Phi_d(\theta_0+t\omega))$ identically, $f_0=\varphi$, and Theorem~\ref{thm:spiral} reduces to Theorem~\ref{thm:main} with the full rate $T^{-\vartheta}$.
\end{remark}
\section{Rough angular data and weighted spiral means}\label{sec:rough}

So far the data were at least continuous in the angular variable. In this section we move toward the opposite end of the regularity scale in two directions: merely integrable angular data, for which the natural statement is an almost-every-phase theorem, and data with a homogeneous singularity at the center, for which the time average itself must be renormalized. For the latter we exhibit a sharp contrast between two model radial regimes: power-law shrinking transfers convergence, and the rates of Theorem~\ref{thm:main} for H\"older data, whereas exponential shrinking can destroy convergence for every phase.

\begin{theorem}[Integrable angular data]\label{thm:L1}
Let $d\ge1$ and let $\omega\in\R^d$ satisfy $k\cdot\omega\neq0$ for all $k\in\Z^d\setminus\{0\}$. Let $a\in L^1(\Sph^d,\sigma_d)$ (a fixed Borel representative). Then for $\lambda_d$-almost every $\theta_0\in\T^d$,
\[
\lim_{T\to\infty}\frac1T\int_0^Ta\bigl(\Phi_d(\theta_0+t\omega)\bigr)\,dt=\langle a\rangle.
\]
\end{theorem}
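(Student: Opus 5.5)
The plan is to reduce the statement to Birkhoff's ergodic theorem for the uniquely ergodic flow $\theta(t)=\theta_0+t\omega$ on $(\T^d,\lambda_d)$, applied to the torus observable $F=a\circ\Phi_d$. First, since $\Phi_d$ is Borel (Definition~\ref{def:map}) and $(\Phi_d)_\#\lambda_d=\sigma_d$ by Lemma~\ref{lem:push}, the composition $F$ is Borel on $\T^d$ and
\[
\int_{\T^d}|F|\,d\lambda_d=\int_{\Sph^d}|a|\,d\sigma_d<\infty,\qquad \int_{\T^d}F\,d\lambda_d=\langle a\rangle .
\]
Here one needs Lemma~\ref{lem:push} for Borel (not only continuous) $a$. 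This follows from the continuous case by the standard monotone-class argument: two finite Borel measures on the compact metric space $\Sph^d$ that agree on $C(\Sph^d)$ coincide. Then apply the change of variables first to $|a|$ and afterwards to $a^\pm$. For $d=1$ the case is the same, with $\Phi_1$ a smooth bijection.

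Next, the flow $t\mapsto\theta_0+t\omega$ preserves $\lambda_d$. Since $k\cdot\omega\neq0$ for all $k\ne0$, it is ergodic: an invariant $L^2$ function has Fourier coefficients satisfying $\widehat h(k)e(k\cdot\omega t)=\widehat h(k)$ for all $t$, hence $\widehat h(k)=0$ for $k\neq0$. The continuous-time Birkhoff ergodic theorem \cite[Ch.~1]{CFS}, applied to $F\in L^1(\lambda_d)$, then gives
\[
\frac1T\int_0^TF(\theta_0+t\omega)\,dt\longrightarrow\int F\,d\lambda_d=\langle a\rangle
\]
for $\lambda_d$-a.e.\ $\theta_0$. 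One measure-theoretic point needs care: the time integral must be well defined, meaning $t\mapsto F(\theta_0+t\omega)$ must be measurable and locally integrable. Measurability is immediate from joint Borel measurability of $(\theta_0,t)\mapsto F(\theta_0+t\omega)$. Local integrability for a.e.\ $\theta_0$ follows from Fubini and invariance:
\[
\int_{\T^d}\int_0^N|F(\theta_0+t\omega)|\,dt\,d\theta_0=N\|F\|_{L^1}<\infty .
\]
Since the hypothesis is in terms of a fixed Borel representative $a$, the conclusion refers to that representative. Changing $a$ on a $\sigma_d$-null set changes $F$ on a $\lambda_d$-null set, and by the same Fubini argument this changes the orbit integrals only for a null set of phases.

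The main obstacle is thus only the bookkeeping just described: extending push-forward to $L^1$ and checking that the flow version of Birkhoff applies to a Borel representative. A possible alternative is to deduce the continuous-time statement from the discrete one for the time-one map $\theta\mapsto\theta+\omega$. I would avoid this, because that map need not be ergodic: ergodicity of the time-one map requires independence of $1,\omega_1,\dots,\omega_d$. Instead I would invoke the flow ergodic theorem directly, or equivalently apply the discrete Birkhoff theorem to $\widetilde F(\theta)=\int_0^1F(\theta+t\omega)\,dt$ on the suspension and handle the fractional part. The fractional part is controlled by $\frac1T\int_{\lfloor T\rfloor}^{T}|F|\to0$ a.e., which again follows from Birkhoff applied to $|F|$.
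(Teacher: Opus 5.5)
Your proposal is correct and follows essentially the same route as the paper: extend the push-forward identity of Lemma~\ref{lem:push} to $L^1$, prove ergodicity of the flow by the Fourier-coefficient argument, use joint measurability and Fubini to get local integrability of $t\mapsto F(\theta_0+t\omega)$ for a.e.\ phase, and then apply Birkhoff's ergodic theorem for measure-preserving flows. Your added remarks on the choice of Borel representative and on why the time-one map should be avoided go slightly beyond the paper, but they do not change the argument.
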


\begin{proof}
The flow $S_t\theta=\theta+t\omega$ (the letter $\tau$ being reserved for the Diophantine exponent) preserves $\lambda_d$ and is ergodic: if $h\in L^2(\T^d)$ is invariant, then $\widehat h(k)\bigl(e(k\cdot\omega\,t)-1\bigr)=0$ for all $t$ and all $k$, forcing $\widehat h(k)=0$ for $k\neq0$. The push-forward identity of Lemma~\ref{lem:push} extends from $C(\Sph^d)$ to bounded Borel functions by the monotone class theorem, and to $L^1$ by truncation; hence $F=a\circ\Phi_d\in L^1(\T^d)$ with $\int F\,d\lambda_d=\langle a\rangle$. Moreover $(\theta_0,t)\mapsto F(\theta_0+t\omega)$ is jointly measurable and, by Fubini, $t\mapsto F(\theta_0+t\omega)$ is locally integrable for a.e.\ $\theta_0$. The claim is now Birkhoff's ergodic theorem for measure-preserving flows \cite[Ch.~1]{CFS}.
\end{proof}

\begin{remark}\label{rem:L1sharp}
(a) \emph{The exceptional null set is unavoidable.} Fix $d\ge2$, $\omega$ with $k\cdot\omega\neq0$ for $k\neq0$, and $\theta_0\in\T^d$; we construct $a\in L^1(\Sph^d)$ whose averages diverge at this particular phase. Recall $|\omega_m|>0$ for all $m$ (take $k=e_m$). We first record two facts.

(i) \emph{Recurrence to the bulk with linear time gaps.} The set $U=\intr\T^d_{1/8}$ is open and nonempty, so Lemma~\ref{lem:syndetic} provides $L=L(\omega,d)\ge1$ such that every time window of length $L$ contains a positive measure of times $t$ with $\theta_0+t\omega\in\T^d_{1/8}$. Choosing one visiting time $t_n$ in each window $[n(L+1),\,n(L+1)+L]$, $n\ge1$, we obtain times $t_n\uparrow\infty$ with
\[
n\le t_n\le C_\omega n,\qquad t_{n+1}\ge t_n+1,\qquad \theta_0+t_n\omega\in\T^d_{1/8},
\]
with $C_\omega=2L+1$: indeed $t_n\ge n(L+1)\ge n$, $t_n\le n(L+1)+L\le n(2L+1)$, and $t_{n+1}-t_n\ge(n+1)(L+1)-n(L+1)-L=1$.

(ii) \emph{Bounded speed near the bulk.} While $\theta_0+t\omega\in\T^d_{1/16}$, the curve $t\mapsto\Phi_d(\theta_0+t\omega)$ is differentiable with speed at most $V:=C_d(1/16)^{\frac1d-1}|\omega|$, by the local derivative bounds \eqref{eq:localder} (equivalently, Lemma~\ref{lem:reg}(b), which is a statement about the coordinate cube and hence applies along the trajectory as long as it stays in $\T^d_{1/16}$). Moreover, since each coordinate moves with constant speed $|\omega_m|$, a trajectory starting in $\T^d_{1/8}$ remains in $\T^d_{1/16}$ for a time at least $h_0:=(16\max_m|\omega_m|)^{-1}$.

Now set $r_n:=n2^{-n}$ and let $B_n$ be the \emph{chordal} ball $\{x\in\Sph^d:|x-\Phi_d(\theta_0+t_n\omega)|<r_n\}$; since the geodesic and chordal distances on $\Sph^d$ are comparable, with $|x-y|\le\dist_{\Sph^d}(x,y)\le\frac\pi2|x-y|$, replacing $B_n$ by a geodesic ball changes nothing but the constants. The speed bound of (ii) is stated for the Euclidean, hence chordal, distance, so it applies to $B_n$ directly. By (i)--(ii), for all $n\ge n_0$ large enough that $r_n\le Vh_0$ and $r_n/V<1$ --- possible since $r_n\to0$ --- the curve spends time at least $r_n/V$ inside $B_n$ (it cannot exit a ball of radius $r_n$ in less time at speed $\le V$), and this time is spent in the interval $[t_n,t_n+r_n/V]\subset[t_n,t_n+1]\subset[0,t_{n+1}]$, because $t_{n+1}-t_n\ge1$ by (i). In particular, for $n\le N-1$ these intervals are pairwise disjoint and contained in $[0,t_N]$. Set $a=\sum_n2^n\mathbf 1_{B_n}$: then
\[
\int_{\Sph^d}a\,d\sigma_d=\sum_n2^n\sigma_d(B_n)\le C\sum_n2^nr_n^d=C\sum_nn^d2^{-n(d-1)}<\infty
\]
since $d\ge2$, so $a\in L^1(\Sph^d)$; while, using $t_N\le C_\omega N$,
\begin{align*}
\frac1{t_N}\int_0^{t_N}a\bigl(\Phi_d(\theta_0+t\omega)\bigr)\,dt
&\ge\frac1{C_\omega N}\sum_{n_0\le n\le N-1}2^n\,\frac{r_n}V\\
&=\frac1{C_\omega VN}\sum_{n_0\le n\le N-1}n
\gtrsim N\longrightarrow\infty.
\end{align*}
Thus the averages diverge at this particular phase.

(b) \emph{No uniform rates in this generality.} Without a quantitative regularity class for $a$, no uniform modulus can be expected from an $L^1$ assumption alone. Independently, Section~\ref{sec:opt} shows that even for smooth observables no prescribed modulus is uniform over all rationally independent frequencies.

(c) \emph{Purely angular spiral data.} If $f(x_0+r\xi)=a(\xi)$ is independent of $r$, then $f(\Gamma(t))=a(\Phi_d(\theta_0+t\omega))$ along any shrinking spiral, and Theorem~\ref{thm:L1} shows that the spiral average recovers $\langle a\rangle$ for a.e.\ phase for merely integrable $a$, a regime in which the spherical means $m(r)$ need only be interpreted as integrals.
\end{remark}

We now turn to singular data. The elementary Abelian lemma below transfers convergence, and rates, from plain to weighted time averages.

\begin{theorem}[Abelian transfer to weighted means]\label{thm:abel}
Let $u\in L^1_{\mathrm{loc}}([0,\infty))$ and $A(T)=\frac1T\int_0^Tu(t)\,dt$. Let $w\in C^1([0,\infty))$ be positive and nondecreasing with $W(T)=\int_0^Tw(t)\,dt\to\infty$, and suppose
\[
\Lambda:=\sup_{T\ge T_1}\frac{T\,w(T)}{W(T)}<\infty\qquad\text{for some }T_1>0.
\]
\begin{enumerate}[label=\textup{(\alph*)}, leftmargin=2.2em]
\item If $A(T)\to\ell\in\R$, then $\dfrac1{W(T)}\displaystyle\int_0^Tw(t)\,u(t)\,dt\to\ell$.
\item If $|A(T)-\ell|\le M\,T^{-\vartheta}$ for all $T>0$ and some $\vartheta\in(0,1)$, then
\[
\Bigl|\frac1{W(T)}\int_0^Tw\,u\,dt-\ell\Bigr|\le2\Lambda M\,T^{-\vartheta}\qquad\text{for all }T\ge T_1.
\]
\end{enumerate}
\end{theorem}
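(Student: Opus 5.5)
The plan is to integrate by parts against the primitive $U(T)=\int_0^Tu=T\,A(T)$, which is absolutely continuous since $u\in L^1_{\mathrm{loc}}$. Because $w\in C^1$, we have
\[
\int_0^Tw\,u\,dt=w(T)U(T)-\int_0^Tw'(t)U(t)\,dt=w(T)\,T A(T)-\int_0^Tw'(t)\,tA(t)\,dt .
\]
Applying the same identity to $u\equiv1$ gives $W(T)=Tw(T)-\int_0^Tw'(t)\,t\,dt$. Subtracting $\ell$ times this, and writing $e(t)=A(t)-\ell$, yields the exact representation
\[
\int_0^Tw\,u\,dt-\ell W(T)=Tw(T)\,e(T)-\int_0^Tw'(t)\,t\,e(t)\,dt .
\]
The key structural facts are $w'\ge0$ (monotonicity) and $\int_0^Tw'(t)\,t\,dt=Tw(T)-W(T)\le Tw(T)$.

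For part (b), I would bound the two terms by the triangle inequality. The first term gives $Tw(T)\,M\,T^{-\vartheta}$. For the second, $e(t)$ is bounded by $Mt^{-\vartheta}$, so that term is at most
\[
M\int_0^Tw'(t)\,t^{1-\vartheta}\,dt\le M\,T^{-\vartheta}\int_0^Tw'(t)\,t\,dt\cdot(\text{?}).
\]
Here $t^{1-\vartheta}\le T^{1-\vartheta}$ gives instead $\int_0^Tw'\,t^{1-\vartheta}\le T^{1-\vartheta}(w(T)-w(0))\le T^{1-\vartheta}w(T)$. This bound is cleaner and also handles integrability near $0$, since $t^{1-\vartheta}$ is bounded there. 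Both terms are therefore at most $M\,Tw(T)\,T^{-\vartheta}$. Dividing by $W(T)$ and using $Tw(T)/W(T)\le\Lambda$ for $T\ge T_1$ gives the constant $2\Lambda M$, exactly as stated.

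For part (a), I would use the same representation with a split. Given $\varepsilon>0$, pick $T_2$ with $|e(t)|\le\varepsilon$ for $t\ge T_2$. The boundary term then contributes at most $\Lambda\varepsilon$ after division by $W(T)$. The integral over $[T_2,T]$ contributes at most $\varepsilon\int_0^Tw'\,t\,dt/W(T)\le\Lambda\varepsilon$. The integral over $[0,T_2]$ is a fixed finite number, since $t\,e(t)=U(t)-\ell t$ is continuous, hence bounded on $[0,T_2]$. Divided by $W(T)\to\infty$, it tends to $0$.

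The only delicate points are the validity of integration by parts with $u$ merely locally integrable, which is fine because $U$ is absolutely continuous and $w$ is $C^1$, and the behaviour of $t\,e(t)$ near $t=0$ in (b). The latter is harmless because the bound $|t\,e(t)|\le Mt^{1-\vartheta}$ is bounded on bounded intervals, as $\vartheta<1$. I therefore expect no real obstacle; the main care is bookkeeping so that the constant comes out as $2\Lambda$.
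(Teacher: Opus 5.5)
Your proof is correct and follows the paper's argument: the same integration-by-parts representation $\int_0^Tw\,u\,dt-\ell W(T)=Tw(T)e(T)-\int_0^Tw'(t)\,t\,e(t)\,dt$, the identity $\int_0^Tw'(t)\,t\,dt=Tw(T)-W(T)$, and the same split at a large time for (a). The only deviation is in (b), where you bound $\int_0^Tw'(t)\,t^{1-\vartheta}\,dt\le T^{1-\vartheta}(w(T)-w(0))$ directly instead of using the paper's second integration by parts; this gives the same bound $w(T)T^{1-\vartheta}$ slightly more directly, but you should delete the abandoned displayed line ending in $(\text{?})$, since without an extra factor that inequality runs the wrong way ($t^{1-\vartheta}\ge T^{-\vartheta}t$ on $[0,T]$).
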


\begin{proof}
Write $U(T)=\int_0^Tu=T\ell+T\varepsilon(T)$, $\varepsilon(T)=A(T)-\ell$. Integration by parts and the identity $\int_0^Tw'(t)\,t\,dt=Tw(T)-W(T)$ give
\begin{align*}
\int_0^Tw\,u\,dt
&=w(T)U(T)-\int_0^Tw'(t)U(t)\,dt
 =W(T)\,\ell+R(T),\\
R(T)&=Tw(T)\varepsilon(T)-\int_0^Tw'(t)\,t\,\varepsilon(t)\,dt.
\end{align*}
(b) If $|\varepsilon(t)|\le M\,t^{-\vartheta}$ then, using $w'\ge0$ and one more integration by parts,
\begin{align*}
\int_0^Tw'(t)\,t^{1-\vartheta}\,dt
&=w(T)T^{1-\vartheta}-(1-\vartheta)\int_0^Tw(t)\,t^{-\vartheta}\,dt\\
&\le w(T)\,T^{1-\vartheta},
\end{align*}
so $|R(T)|\le2M\,w(T)\,T^{1-\vartheta}\le2\Lambda M\,W(T)\,T^{-\vartheta}$ for $T\ge T_1$. (a) Given $\epsilon>0$ pick $T_0\ge T_1$ with $\sup_{t\ge T_0}|\varepsilon(t)|\le\epsilon$; then
\begin{align*}
|R(T)|
&\le Tw(T)\,\epsilon+\int_0^{T_0}w'(t)|U(t)-t\ell|\,dt
  +\epsilon\int_{T_0}^Tw'(t)\,t\,dt\\
&\le2\Lambda\,W(T)\,\epsilon+C(T_0),
\end{align*}
and dividing by $W(T)\to\infty$ proves the claim.
\end{proof}

\begin{corollary}[Homogeneous singularities along power-law spirals]\label{cor:hom}
Let $d\ge2$, $x_0\in\R^{d+1}$, $\alpha\ge0$, and
\[
f(x)=|x-x_0|^{-\alpha}\,a\Bigl(\frac{x-x_0}{|x-x_0|}\Bigr),\qquad 0<|x-x_0|\le r_0.
\]
Let $g(t)=r_0(1+t)^{-b}$ with $b>0$, let $\Gamma(t)=x_0+g(t)\Phi_d(\theta_0+t\omega)$ and $W_\alpha(T)=\int_0^Tg(t)^{-\alpha}\,dt$. Then:
\begin{enumerate}[label=\textup{(\alph*)}, leftmargin=2.2em]
\item if $\omega\in\mathcal D(\gamma,\tau)$ and $a\in C^s(\Sph^d)$, $0<s\le1$, then for every $\theta_0$ and every $T\ge1$,
\[
\Bigl|\frac1{W_\alpha(T)}\int_0^Tf(\Gamma(t))\,dt-\langle a\rangle\Bigr|\le C\,\|a\|_{C^s}\,T^{-\vartheta},\qquad C=C(d,\gamma,\tau,s,b,\alpha),
\]
with $\vartheta$ as in Theorem~\ref{thm:main};
\item if $k\cdot\omega\neq0$ for all $k\neq0$ and $a\in C(\Sph^d)$, the left-hand side of the display in \textup{(a)} tends to $0$ as $T\to\infty$, for every $\theta_0$;
\item if $k\cdot\omega\neq0$ for all $k\neq0$ and $a\in L^1(\Sph^d)$, the left-hand side of the display in \textup{(a)} tends to $0$ as $T\to\infty$, for a.e.\ $\theta_0$.
\end{enumerate}
\end{corollary}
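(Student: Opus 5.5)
The plan is to reduce all three parts to the Abelian transfer of Theorem~\ref{thm:abel}, applied with the weight $w(t)=g(t)^{-\alpha}=r_0^{-\alpha}(1+t)^{\alpha b}$ and the observable $u(t)=a(\Phi_d(\theta_0+t\omega))$. By homogeneity,
\[
f(\Gamma(t))=g(t)^{-\alpha}\,a\bigl(\Phi_d(\theta_0+t\omega)\bigr)=w(t)\,u(t),
\]
since $|\Gamma(t)-x_0|=g(t)$ and $(\Gamma(t)-x_0)/|\Gamma(t)-x_0|=\Phi_d(\theta_0+t\omega)$. Moreover $W(T)=W_\alpha(T)$. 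So the quantity to estimate is exactly $\frac1{W(T)}\int_0^Tw\,u\,dt-\langle a\rangle$, with $\ell=\langle a\rangle$.

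First I verify the hypotheses on $w$. It is $C^1$, positive and nondecreasing because $\alpha b\ge0$. Also $W(T)\ge r_0^{-\alpha}T\to\infty$. An explicit computation gives
\[
W(T)=r_0^{-\alpha}\frac{(1+T)^{\alpha b+1}-1}{\alpha b+1},
\]
so that
\[
\frac{Tw(T)}{W(T)}=\frac{(\alpha b+1)\,T(1+T)^{\alpha b}}{(1+T)^{\alpha b+1}-1}.
\]
For $T\ge T_1=1$ this is bounded by a constant $\Lambda=\Lambda(\alpha b)$: the numerator is at most $(\alpha b+1)(1+T)^{\alpha b+1}$, and the denominator is at least $(1-2^{-\alpha b-1})(1+T)^{\alpha b+1}$. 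The case $\alpha=0$ is included, with $\Lambda\le2$.

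For (a), Theorem~\ref{thm:main} with $t_0=0$ gives $|A(T)-\langle a\rangle|\le C\|a\|_{C^s}T^{-\vartheta}$ for all $T>0$, uniformly in $\theta_0$, and $\vartheta<1$. Theorem~\ref{thm:abel}(b) with $M=C\|a\|_{C^s}$ then yields the bound $2\Lambda M T^{-\vartheta}$ for $T\ge1$, with the constant depending on $b,\alpha$ only through $\Lambda$.

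For (b), Remark~\ref{rem:continuous} gives $A(T)\to\langle a\rangle$ for every $\theta_0$, and Theorem~\ref{thm:abel}(a) concludes. For (c), Theorem~\ref{thm:L1} gives $A(T)\to\langle a\rangle$ for a.e.\ $\theta_0$. In that case I also need $u\in L^1_{\mathrm{loc}}$, which holds for a.e.\ $\theta_0$ by the Fubini remark in its proof. Intersecting the two full-measure sets and applying Theorem~\ref{thm:abel}(a) finishes the argument.

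The only step needing care is the uniform bound on $Tw(T)/W(T)$, specifically the choice of $T_1$ so that the statement holds for all $T\ge1$. Since $W(T)$ is comparable to $(1+T)^{\alpha b+1}$ once $T\ge1$, this is routine.
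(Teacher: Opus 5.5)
Your proposal is correct and follows the paper's proof: you write $f(\Gamma(t))=w(t)u(t)$ with $w=g^{-\alpha}$, check the hypotheses of Theorem~\ref{thm:abel}, and then use Theorem~\ref{thm:main}, Remark~\ref{rem:continuous} and Theorem~\ref{thm:L1} (with its Fubini local-integrability remark) for parts (a), (b) and (c) respectively. Your explicit bound $\Lambda\le(\alpha b+1)/(1-2^{-\alpha b-1})$ on $[1,\infty)$ is a slightly more careful version of the paper's observation that $Tw(T)/W(T)\to1+b\alpha$.
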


\begin{proof}
Along the spiral,
\[
f(\Gamma(t))=w(t)u(t),\qquad
w(t)=r_0^{-\alpha}(1+t)^{b\alpha},\qquad
u(t):=a(\Phi_d(\theta_0+t\omega)).
\]
Here $w$ is positive, $C^1$, and nondecreasing, and $Tw(T)/W(T)\to1+b\alpha$. Hence the constant $\Lambda$ of Theorem~\ref{thm:abel} is finite, with $\Lambda\le C(b,\alpha)$ for $T_1=1$. Applying Theorem~\ref{thm:abel}, part (a) follows from Theorem~\ref{thm:main} with $\ell=\langle a\rangle$ and $M=C\|a\|_{C^s}$; part (b) follows from Remark~\ref{rem:continuous}; and part (c) follows from Theorem~\ref{thm:L1}, whose proof also gives the required local integrability for a.e.\ $\theta_0$. For $\alpha=0$ the statement reduces to Remark~\ref{rem:angular} and Remark~\ref{rem:L1sharp}(c).
\end{proof}

\begin{remark}[The normalization is necessary]\label{rem:norm}
Here $W_\alpha(T)\asymp T^{1+b\alpha}$, so the plain time average satisfies
\[
\frac1T\int_0^Tf(\Gamma(t))\,dt=\frac{W_\alpha(T)}T\bigl(\langle a\rangle+o(1)\bigr)\asymp T^{b\alpha}\,\langle a\rangle,
\]
which diverges whenever $b\alpha>0$ and $\langle a\rangle\neq0$. Thus the normalization by $W_\alpha(T)$ is essential: equivalently, one averages the bounded angular factor $g^\alpha f\circ\Gamma$. With this normalization Corollary~\ref{cor:hom} recovers $\langle a\rangle$ and, in the H\"older-Diophantine regime, the rate of Theorem~\ref{thm:main}.
\end{remark}

For rapidly shrinking profiles even the normalized average fails, as we now show.

\begin{proposition}[Exponential spirals do not average]\label{prop:exp}
Let $d=2$, $\omega=(1,\beta)$ with $\beta$ irrational, and let $g(t)=r_0e^{-\nu t}$, $\nu>0$, $\alpha>0$, so that $w(t)=g(t)^{-\alpha}=r_0^{-\alpha}e^{\varrho t}$ with $\varrho=\alpha\nu$, and $W(T)=\int_0^Tw$. There exists $\varphi\in C^\infty(\Sph^2)$ with $\langle\varphi\rangle=0$, independent of $\beta,\nu,\alpha$, such that for every $\theta_0\in\T^2$ the normalized averages
\[
h(T)=\frac1{W(T)}\int_0^Tw(t)\,\varphi\bigl(\Phi_2(\theta_0+t\omega)\bigr)\,dt
\]
have no limit as $T\to\infty$. In particular, for $f=|x-x_0|^{-\alpha}\varphi\bigl(\frac{x-x_0}{|x-x_0|}\bigr)$ the normalized spiral means along the exponential spiral diverge for every phase, although the normalized spherical means $r^{\alpha}\langle f_r\rangle\equiv\langle\varphi\rangle=0$ are constant in $r$.
\end{proposition}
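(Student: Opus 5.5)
The plan is to take a zonal observable, for which the orbit average reduces to a one-dimensional periodic signal, and then to show that exponential weighting turns a periodic signal into a non-constant periodic function of $T$. I would choose
\[
\varphi(x)=x_3 ,
\]
the height function on $\Sph^2$. It is $C^\infty$, it has $\langle\varphi\rangle=0$ by symmetry, and it does not depend on $\beta,\nu,\alpha$. By Remark~\ref{rem:latitude}, $z_2(u)=2u-1$, so Definition~\ref{def:map} gives
\[
\varphi\bigl(\Phi_2(\theta_0+t\omega)\bigr)=p(t):=2\{\theta_{0,2}+\beta t\}-1 .
\]
This is a mean-zero sawtooth of period $1/|\beta|$. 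The longitude $u_1$, and with it the first component of $\omega$, drops out entirely. The discrete cut crossings are null sets and do not affect any integral.

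\emph{Step 1: localize the weighted average.} Write $w(t)=r_0^{-\alpha}e^{\varrho t}$, so that $W(T)=r_0^{-\alpha}(e^{\varrho T}-1)/\varrho$. Substituting $t=T-\sigma$,
\[
h(T)=\frac{\varrho}{1-e^{-\varrho T}}\int_0^T e^{-\varrho \sigma}\,p(T-\sigma)\,d\sigma .
\]
Because $p$ is bounded, $h(T)=H(T)+O(e^{-\varrho T})$, where
\[
H(T):=\varrho\int_0^\infty e^{-\varrho\sigma}p(T-\sigma)\,d\sigma .
\]
So $h$ has a limit if and only if $H$ does.

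\emph{Step 2: $H$ is periodic and non-constant.} $H$ is the convolution of the periodic function $p$ with the integrable kernel $\varrho e^{-\varrho\sigma}\mathbf 1_{\sigma>0}$, so it is continuous and $1/|\beta|$-periodic. Its Fourier coefficients are
\[
\widehat H(n)=\widehat p(n)\,\frac{\varrho}{\varrho+2\pi i n\beta}.
\]
The sawtooth has $\widehat p(n)\neq0$ for every $n\neq0$; for instance $\widehat p(\pm1)\neq0$. Hence $H$ is non-constant, and a non-constant periodic function has no limit at infinity. This holds for every $\theta_0$, since the phase only translates $p$.

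\emph{Step 3: the spiral reformulation.} For the "in particular" clause, observe that along the spiral $f(\Gamma(t))=g(t)^{-\alpha}\varphi(\Phi_2(\theta_0+t\omega))=w(t)\,\varphi(\cdots)$. The normalized spiral mean is therefore exactly $h(T)$. Homogeneity gives $r^\alpha\langle f_r\rangle=\langle\varphi\rangle=0$.

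The only delicate point is making sure the kernel does not annihilate the oscillation, i.e.\ that $\widehat H\not\equiv0$ off $n=0$. This is immediate from the explicit multiplier $\varrho/(\varrho+2\pi in\beta)$, which never vanishes. The contrast with Theorem~\ref{thm:abel} is that here $Tw(T)/W(T)\sim\varrho T$ is unbounded, so the effective averaging window has fixed length $1/\varrho$ instead of growing with $T$.
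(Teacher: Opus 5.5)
Your proof is correct, and it takes a different and more elementary route than the paper.

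\emph{The paper's route.} The paper uses the non-zonal observable coming from $F(u_1,u_2)=\zeta(u_2)\cos(2\pi u_1)$, $\zeta\in C_c^\infty((0,1))$. This observable vanishes near the poles, and its pullback is smooth on $\T^2$ with absolutely summable Fourier coefficients. The weighted average then becomes $H(T)+o(1)$, where $H(T)=\sum_k b_k e(\mu_k T)$ is uniformly almost periodic with frequencies $\mu_k=\pm1+m\beta$. Non-convergence follows from the mean-square identity $\frac1S\int_0^S|H-\ell|^2\,dT\to\sum_k|b_k|^2+|\ell|^2$. That identity needs the irrationality of $\beta$ to make the $\mu_k$ nonzero and pairwise distinct.

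\emph{Your route.} Your zonal choice $\varphi=x_3$ removes the longitude. The same exponential kernel then acts on a genuinely periodic sawtooth and produces a periodic $H$. Non-constancy of $H$ is read off from the non-vanishing multiplier $\varrho/(\varrho+2\pi i n\beta)$ together with $\widehat p(\pm1)\neq0$. Computing $\widehat H(n)$ directly by Fubini correctly sidesteps the fact that the sawtooth series does not converge absolutely.

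\emph{What each route buys.} Your argument needs no almost-periodic theory and uses only $\beta\neq0$, so irrationality plays no role in the phenomenon. It also isolates the mechanism cleanly: the effective averaging window has fixed length of order $1/\varrho$. The paper's argument is robust in a different way. Its observable is supported away from the poles, so its time signal has no jumps at cut crossings, which makes it visible that the polar cut of $\Phi_2$ is irrelevant. Your sawtooth does jump at every crossing, although any non-constant periodic signal would serve equally well, for instance the triangle wave produced by the continuous variant $\widetilde\Phi_2$. The paper's argument also applies unchanged to any mean-zero observable whose pullback has an absolutely summable, not identically vanishing Fourier series.
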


\begin{proof}
Fix $\zeta\in C_c^\infty((0,1))$, $\zeta\not\equiv0$, and set $F(u_1,u_2)=\zeta(u_2)\cos(2\pi u_1)$ on $\T^2$. Since $F$ vanishes for $u_2$ outside a compact subset of $(0,1)$, and since $\Phi_2$ restricts to a smooth diffeomorphism from $\T^1\times(0,1)$ onto $\Sph^2$ minus the poles (its latitude being $z_2(u)=2u-1$), the function $\varphi:=F\circ(\Phi_2|_{\T^1\times(0,1)})^{-1}$, extended by $0$ near the poles, belongs to $C^\infty(\Sph^2)$, satisfies $\varphi\circ\Phi_2=F$ and $\langle\varphi\rangle=\widehat F(0,0)=0$ by Lemma~\ref{lem:push}. The nonzero Fourier coefficients of $F$ are $\widehat F(\pm1,m)=\frac12\widehat\zeta(m)$, $m\in\Z$, an absolutely summable family, and $\widehat\zeta(m_*)\neq0$ for some $m_*$.

Fix $\theta_0$. Then $u(t):=F(\theta_0+t\omega)=\sum_k\widehat F(k)\,e(k\cdot\theta_0)\,e(\mu_kt)$ with $\mu_k=k\cdot\omega$, an absolutely convergent expansion, and termwise integration (justified by $\sum_k|\widehat F(k)|\int_0^Tw<\infty$) gives, with $\varrho=\alpha\nu$,
\[
h(T)=\sum_k\widehat F(k)\,e(k\cdot\theta_0)\,\frac{\varrho\bigl(e^{\varrho T}e(\mu_kT)-1\bigr)}{(e^{\varrho T}-1)(\varrho+2\pi i\mu_k)}
=H(T)+\sum_kb_k\,\frac{e(\mu_kT)-1}{e^{\varrho T}-1},
\]
where
\[
H(T)=\sum_kb_k\,e(\mu_kT),\qquad b_k=\widehat F(k)\,e(k\cdot\theta_0)\,\frac{\varrho}{\varrho+2\pi i\mu_k},\qquad |b_k|\le|\widehat F(k)|.
\]
The second sum has modulus at most $2\sum_k|\widehat F(k)|/(e^{\varrho T}-1)\to0$, so $h(T)=H(T)+o(1)$. For $k=(\pm1,m)$ in the spectrum of $F$ the frequencies $\mu_k=\pm1+m\beta$ are nonzero and pairwise distinct, because $\beta$ is irrational. Suppose $h(T)\to\ell$; then $H(T)\to\ell$, hence $\frac1S\int_0^S|H(T)-\ell|^2\,dT\to0$. On the other hand, expanding the square and using absolute convergence together with $\frac1S\int_0^Se(\mu T)\,dT\to0$ for every real $\mu\neq0$ (Lemma~\ref{lem:osc}), applied to the frequencies $\mu_k$ and to the pairwise differences $\mu_k-\mu_l\neq0$, $k\neq l$,
\[
\frac1S\int_0^S|H(T)-\ell|^2\,dT\longrightarrow\sum_k|b_k|^2+|\ell|^2.
\]
Hence $b_k=0$ for all $k$, contradicting $|b_{(1,m_*)}|=\frac12|\widehat\zeta(m_*)|\,\varrho/|\varrho+2\pi i\mu_{(1,m_*)}|>0$. Thus $h$ has no limit. (For the background on uniformly almost periodic functions underlying this computation see \cite{Besicovitch}.)
\end{proof}

\begin{remark}[Power-law versus exponential profiles]\label{rem:dich}
Theorem~\ref{thm:abel} applies to $w=g^{-\alpha}$ whenever $Tg(T)^{-\alpha}/\int_0^Tg^{-\alpha}$ stays bounded, which holds for every power-law profile. Proposition~\ref{prop:exp} shows that for exponential profiles the conclusion itself can fail, for every phase and already for $C^\infty$ mean-zero data. These two model regimes demonstrate that the radial concentration rate interacts genuinely with angular equidistribution. The exact borderline class of admissible profiles is posed as an open problem in Section~\ref{sec:open}.
\end{remark}

\section{Limits of uniformity: no rate over all rationally independent frequencies}\label{sec:opt}

Throughout this section $d=2$, $\theta_0=0$, and for $\omega\in\R^2$ with rationally independent entries we write $A_T(\varphi)=\frac1T\int_0^T\varphi(\Phi_2(t\omega))\,dt$. By Remark~\ref{rem:continuous}, $A_T(\varphi)\to\langle\varphi\rangle$ for every continuous $\varphi$. The next theorem shows that this convergence admits no modulus valid for all rationally independent frequencies, even on the unit ball of mean-zero smooth observables in $C^1$.

\begin{remark}[Convention on the norms $\|\cdot\|_{C^j}$]\label{rem:normconv}
In this section $\|\cdot\|_{C^1(\Sph^2)}$ always denotes the norm $\|\cdot\|_\infty+[\cdot]_1$ of the Notation paragraph in Section~\ref{sec:intro}, i.e.\ the case $s=1$ of $\|\cdot\|_{C^s}$, taken with respect to the Euclidean chordal metric; this is the same norm in which Theorem~\ref{thm:main} measures its data, so that Theorems~\ref{thm:main} and~\ref{thm:opt} refer to one and the same class of observables. For integers $j\ge2$ we fix, once and for all, any family of norms $\|\cdot\|_{C^j(\Sph^2)}$ on $C^\infty(\Sph^2)$ which
\begin{enumerate}[label=\textup{(\alph*)}, leftmargin=2.2em]
\item is nondecreasing in $j$, with $\|\cdot\|_{C^1}\le\|\cdot\|_{C^2}$, and
\item defines the $C^\infty$ topology on $\Sph^2$, in the sense that a sequence which is Cauchy for every $\|\cdot\|_{C^j}$ converges in $C^\infty(\Sph^2)$.
\end{enumerate}
For instance, one may take $\|\varphi\|_{C^j}=\|\varphi\|_\infty+[\varphi]_1+\sum_{2\le i\le j}\|\nabla^i\varphi\|_\infty$ with $\nabla$ the covariant derivative of the round metric. Only \textup{(a)} and \textup{(b)} are used below; no other property of the family is invoked. Note that the chordal seminorm cannot simply be replaced by the gradient sup-norm without altering constants: since the chordal and geodesic distances on $\Sph^2$ satisfy $\theta\le\frac\pi2|x-y|$, one has $[\varphi]_1\le\frac\pi2\|\nabla\varphi\|_\infty$, and the factor $\frac\pi2$ is attained in the limit of antipodal pairs. Fixing $\|\cdot\|_{C^1}$ as above is what makes the normalization $\|\varphi\|_{C^1}\le1$ in Theorem~\ref{thm:opt} exact rather than up to a constant.
\end{remark}

\begin{theorem}[No uniform rate]\label{thm:opt}
Let $R\colon[1,\infty)\to(0,\infty)$ be any function with $\lim_{T\to\infty}R(T)=0$. Then there exist $\beta\in(0,1)$ irrational (so that $\omega=(1,\beta)$ is rationally independent) and $\varphi\in C^\infty(\Sph^2)$ with
\[
\langle\varphi\rangle=0,\qquad \|\varphi\|_{C^1(\Sph^2)}\le1,
\qquad
\limsup_{T\to\infty}\frac{|A_T(\varphi)|}{R(T)}=\infty.
\]
\end{theorem}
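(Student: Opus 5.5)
We construct $\beta$ and $\varphi$ simultaneously, by induction on a sequence of continued-fraction resonances. Everything is done on the torus, as in the proof of Proposition~\ref{prop:exp}. Take $F=\sum_n\epsilon_nF_n$ with building blocks
\[
F_n(u_1,u_2)=\zeta(u_2)\cos\bigl(2\pi(p_nu_1-q_nu_2)\bigr),\qquad \zeta\in C_c^\infty((0,1)),\ \zeta\ge0,\ \zeta\not\equiv0,
\]
with $\zeta$ fixed once and for all. Each $F_n$ vanishes near the polar cut, so, exactly as in Proposition~\ref{prop:exp}, $\varphi_n:=F_n\circ(\Phi_2|_{\T^1\times(0,1)})^{-1}$ extends by $0$ to a function in $C^\infty(\Sph^2)$. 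It satisfies $\varphi_n\circ\Phi_2=F_n$, and by Lemma~\ref{lem:push} we have $\langle\varphi_n\rangle=\int F_n=0$ as soon as $p_n\ne0$. Here $p_n/q_n$ are the convergents of the continued fraction of $\beta$ that is being built.

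The resonance mechanism runs as follows. Along $\theta(t)=t\omega=(t,t\beta)$ the phase is $p_nt-q_n\beta t=t\,q_n(p_n/q_n-\beta)$. This is a slow frequency $\delta_n:=q_n\beta-p_n$, with $|\delta_n|<1/q_{n+1}$. Hence $F_n(\theta(t))=\zeta(\{t\beta\})\cos(2\pi\delta_nt)$. For $T\le c/|\delta_n|$ the cosine stays $\ge1/2$, and therefore
\[
A_T(\varphi_n)\ge\tfrac12\cdot\frac1T\int_0^T\zeta(\{t\beta\})\,dt\ge\tfrac14\textstyle\int\zeta
\]
for $T$ large compared with $1/\beta$, by Lemma~\ref{lem:occupation} applied to the coordinate $u_2$. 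The phase $\theta_0=0$ is what makes the sign of this resonant contribution positive. Thus at the time $T_n=c/|\delta_n|$, which can be made as large as we like by making $a_{n+1}$ large, the block $\epsilon_n\varphi_n$ contributes about $\epsilon_n\int\zeta/4$.

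The induction proceeds as follows. First choose weights $\epsilon_n$ with $\sum_n\epsilon_n\|\varphi_n\|_{C^j}<\infty$ for every $j$. The norms $\|\varphi_n\|_{C^1}\lesssim p_n+q_n$ are known once $p_n,q_n$ are, so $\epsilon_n$ can be chosen after $p_n,q_n$; this uses property (b) of Remark~\ref{rem:normconv}, gives $\varphi\in C^\infty$, and after an overall rescaling gives $\|\varphi\|_{C^1}\le1$. Next, having fixed $a_1,\dots,a_n$ (hence $p_n,q_n,\epsilon_n$), choose $a_{n+1}$ so large that three conditions hold.
\begin{enumerate}[label=\textup{(\roman*)}, leftmargin=2.2em]
\item $R(T_n)\le \epsilon_n^2/n$. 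This is possible since $R\to0$ and $T_n\to\infty$ with $a_{n+1}$.
\item The earlier blocks $\sum_{m<n}\epsilon_m\varphi_m$ have averages at time $T_n$ bounded by $\epsilon_n/100$. This holds because their frequencies $\delta_m\ne0$ are already fixed up to perturbation of $\beta$ within the cylinder, and they are smooth (Lemma~\ref{lem:osc} applied to their finitely many effective Fourier modes), so their averages decay like $O_m(1/T)$.
\item The later blocks are handled by the tail condition below.
\end{enumerate}
For the later blocks we require $\sum_{m>n}\epsilon_m\|\varphi_m\|_\infty\le\epsilon_n/100$, which is arranged by choosing $\epsilon_{m}$ rapidly decreasing. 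Then $|A_{T_n}(\varphi)|\gtrsim\epsilon_n$, so
\[
\frac{|A_{T_n}(\varphi)|}{R(T_n)}\gtrsim n/\epsilon_n\to\infty .
\]
Finally, $\beta$ is irrational with unbounded partial quotients, hence Liouville.

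The main obstacle is the bookkeeping in (ii) and (iii). The estimate for the earlier blocks must hold uniformly for every $\beta$ in the cylinder still to be refined. The relevant uniformity is $|\delta_m(\beta)|\ge 1/(2q_{m+1})$ throughout that cylinder, so the $O(1/T)$ constant depends only on data already fixed. The subtle point is circular dependence: $\epsilon_n$ must be chosen before $a_{n+1}$, but the lower bound on $R(T_n)$ needs $T_n$ chosen after $\epsilon_n$. The ordering $p_n,q_n\to\epsilon_n\to a_{n+1}\to T_n$ resolves this. I would also check that the off-resonant Fourier modes $(\pm p_n,\cdot)$ coming from $\zeta$'s expansion do not cancel the main term; working directly with the factorization $\zeta(\{t\beta\})\cos(2\pi\delta_nt)$, as above, avoids Fourier expansion of $\zeta$ altogether.
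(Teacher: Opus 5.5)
Your proposal is correct and is essentially the paper's proof: the same blocks $\zeta(u_2)\cos(2\pi(p_nu_1-q_nu_2))$, a resonance at $T_n\asymp q_{n+1}$ whose sign is fixed by $\theta_0=0$, weights chosen before $a_{n+1}$, and the same own/earlier/later split at $T_n$. You differ only locally, and arguably more simply: you bound the own term through the factorization $\zeta(\{t\beta\})\cos(2\pi\delta_nt)$ and $\zeta\ge0$ rather than the Fourier expansion behind \eqref{eq:principal}, and you bound the earlier blocks through cylinder-uniform lower bounds on their frequencies rather than the anti-circularity estimate \eqref{eq:anticirc}. Two small repairs are needed there: for the modes $\nu\neq0$ of $\zeta$ you also need $|\nu\beta\pm\delta_m|\ge\beta-|\delta_m|$, not only $|\delta_m|\ge1/(2q_{m+1})$, and you should set $T_n:=c\,q_{n+1}$, which $a_{n+1}$ determines, rather than $c/|\delta_n|$, which it does not. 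Finally, your closing remark is a non sequitur: unbounded partial quotients do not by themselves make $\beta$ Liouville (the paper adds $T_j\ge q_j^j$ for that), but the theorem does not need it.
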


The proof occupies the rest of the section.

\subsection*{Building blocks}
Fix once and for all $\chi\in C_c^\infty((0,1))$ with $0\le\chi\le1$ and $\int_0^1\chi\ge\frac78$; regard $\chi$ as a smooth function on $\T^1$ vanishing near $0$. Its Fourier coefficients $\widehat\chi(\nu)$, $\nu\in\Z$, decay rapidly; set
\begin{align*}
c_0&=\tfrac12\widehat\chi(0)\ge\tfrac7{16},
&S_\chi&=\sum_{\nu\in\Z}|\widehat\chi(\nu)|<\infty,\\
S'_\chi&=\sum_{\nu\in\Z}\frac{|\widehat\chi(\nu)|}{\max(1,|\nu|)},
&S''_\chi&=\sup_{\nu\neq0}|\nu|^2|\widehat\chi(\nu)|<\infty.
\end{align*}
For coprime integers $p,q\ge1$ define $F_{p,q}\in C^\infty(\T^2)$ by
\[
F_{p,q}(u_1,u_2)=\chi(u_2)\cos\bigl(2\pi(pu_1-qu_2)\bigr),
\]
whose Fourier support is contained in $\{(\pm p,m):m\in\Z\}$ with
\begin{equation}\label{eq:blockcoeff}
\widehat F_{p,q}(p,m)=\tfrac12\widehat\chi(m+q),\qquad \widehat F_{p,q}(-p,m)=\tfrac12\widehat\chi(m-q).
\end{equation}
In particular $\widehat F_{p,q}(0,0)=0$. Since $d=2$, the latitude in Definition~\ref{def:map} is $z_2(u)=2u-1$ and $\Phi_2$ restricts to a smooth diffeomorphism from $\T^1\times(0,1)$ onto $\Sph^2$ minus the poles; as $F_{p,q}$ vanishes for $u_2$ outside $\supp\chi\Subset(0,1)$, the function
\[
\varphi_{p,q}:=F_{p,q}\circ\bigl(\Phi_2|_{\T^1\times(0,1)}\bigr)^{-1},\qquad\text{extended by $0$ near the poles,}
\]
belongs to $C^\infty(\Sph^2)$, satisfies $\varphi_{p,q}\circ\Phi_2=F_{p,q}$, $\|\varphi_{p,q}\|_\infty\le1$, and $\langle\varphi_{p,q}\rangle=\widehat F_{p,q}(0,0)=0$ by Lemma~\ref{lem:push}.

For $\omega=(1,\beta)$, the absolutely convergent Fourier expansion gives
\begin{equation}\label{eq:blockexp}
\begin{aligned}
A_T(\varphi_{p,q})=\sum_{m\in\Z}\Bigl[&\widehat F_{p,q}(p,m)\,I_T\,e\bigl((p+m\beta)t\bigr)\\
&+\widehat F_{p,q}(-p,m)\,I_T\,e\bigl((-p+m\beta)t\bigr)\Bigr],
\end{aligned}
\end{equation}
where $I_T\,e(at):=\frac1T\int_0^Te(at)\,dt$. The pair of principal modes $m=\mp q$ has frequency $\pm\delta'$, $\delta'=p-q\beta$, and contributes, by \eqref{eq:blockcoeff},
\begin{equation}\label{eq:principal}
2c_0\cdot\mathrm{Re}\,I_T\,e(\delta't)=2c_0\,\frac{\sin(2\pi\delta'T)}{2\pi\delta'T}\ge2c_0\cdot0.9\qquad\text{whenever }2\pi|\delta'|T\le\frac\pi4,
\end{equation}
since $x\mapsto\sin x/x$ is even and decreasing on $[0,\pi]$, so that $\sin x/x\ge\sin(\pi/4)/(\pi/4)=2\sqrt2/\pi=0.9003\ldots>0.9$ for $|x|\le\pi/4$.

\subsection*{The recursive construction}
We construct $\beta$ through its continued fraction $\beta=[0;a_1,a_2,\dots]$, choosing the partial quotients $a_j$ recursively; $p_j/q_j$ denote the convergents ($p_0=0$, $q_0=1$, $p_1=1$, $q_1=a_1$, $p_{j+1}=a_{j+1}p_j+p_{j-1}$, $q_{j+1}=a_{j+1}q_j+q_{j-1}$), which satisfy $\gcd(p_j,q_j)=1$ and
\begin{equation}\label{eq:cf}
\Bigl|\beta-\frac{p_j}{q_j}\Bigr|\le\frac1{q_jq_{j+1}}\qquad(j\ge1),
\end{equation}
a bound determined by $a_1,\dots,a_{j+1}$ alone \cite[Ch.~I]{Cassels}. Set $a_1=1$, so $p_1=q_1=1$ and $\beta\in(\frac12,1)$, i.e.\ $\beta\ge\beta_*:=\frac12$. Alongside we choose weights $\varepsilon_j>0$ and times $T_j\uparrow\infty$; the conventional values $\varepsilon_0=1$, $T_0=1$ serve only to initialize the recursion, no observable $\varphi_0$ being defined, and all sums over blocks below start at index $1$. Suppose $a_1,\dots,a_j$ (hence $p_j,q_j$ and $\varphi_j:=\varphi_{p_j,q_j}$) and all earlier $\varepsilon_i,T_i$ are chosen.

\emph{Step 1.} Set $\displaystyle\varepsilon_j=\min\Bigl(\frac{2^{-j}}{1+\|\varphi_j\|_{C^j(\Sph^2)}},\ \frac{c_0\,\varepsilon_{j-1}}{32}\Bigr)$, the norms $\|\cdot\|_{C^j(\Sph^2)}$ being those fixed in Remark~\ref{rem:normconv}.

\emph{Step 2.} Choose $T_j\ge\max(T_{j-1}+j,\,q_j,\,q_j^j)$ so large that, in addition,
\begin{equation}\label{eq:Tconditions}
R(T_j)\le\frac{c_0\varepsilon_j}{4^j},\qquad
\frac{4S'_\chi}{\pi\beta_*T_j}\le\frac{c_0}8,\qquad
\frac{4q_jS_\chi/\pi+4S''_\chi}{T_j}\le\frac{c_0\varepsilon_j}{16}.
\end{equation}
This is possible since $R(T)\to0$ and all other quantities are fixed at this stage.

\emph{Step 3.} Set $a_{j+1}=\lceil8T_j/q_j\rceil$ ($\ge8$), so that
\begin{equation}\label{eq:qgrowth}
8T_j\le q_{j+1}=a_{j+1}q_j+q_{j-1}\le8T_j+2q_j\le10T_j.
\end{equation}

This defines $\beta$ and, with it, $\varphi=\sum_{j\ge1}\varepsilon_j\varphi_j$. For every $N$,
\[
\sum_j\varepsilon_j\|\varphi_j\|_{C^N}
\le\sum_{j<N}\varepsilon_j\|\varphi_j\|_{C^N}+\sum_{j\ge N}2^{-j}<\infty
\]
by Step~1 together with the monotonicity \textup{(a)} of Remark~\ref{rem:normconv}, which gives $\|\varphi_j\|_{C^N}\le\|\varphi_j\|_{C^j}$ for $j\ge N$; hence the series converges in every $\|\cdot\|_{C^N}$, and $\varphi\in C^\infty(\Sph^2)$ by \textup{(b)}. Moreover, using $\|\varphi_j\|_{C^1}\le\|\varphi_j\|_{C^j}$ for every $j\ge1$,
\[
\|\varphi\|_{C^1}
\le\sum_{j\ge1}\varepsilon_j\|\varphi_j\|_{C^1}
\le\sum_{j\ge1}2^{-j}=1.
\]
We also have $\langle\varphi\rangle=0$ and, by uniform convergence, $A_T(\varphi)=\sum_j\varepsilon_jA_T(\varphi_j)$ for every $T$. Since all $a_j\ge1$ and infinitely many steps occur, $\beta$ is irrational.

\subsection*{Three estimates at time $T_j$}
Fix $j$ and decompose
\[
A_{T_j}(\varphi)=\varepsilon_jA_{T_j}(\varphi_j)
+\sum_{1\le i<j}\varepsilon_iA_{T_j}(\varphi_i)
+\sum_{i>j}\varepsilon_iA_{T_j}(\varphi_i).
\]

\emph{(1) The own term.} Let $\delta_j=|p_j-q_j\beta|$. By \eqref{eq:cf} and \eqref{eq:qgrowth}, $\delta_j\le q_j\cdot\frac1{q_jq_{j+1}}\le\frac1{8T_j}$, hence $2\pi\delta_jT_j\le\frac\pi4$ and the principal modes contribute at least $1.8\,c_0$ by \eqref{eq:principal}. The remaining modes of $\varphi_j$ in \eqref{eq:blockexp} have $m=\mp q_j+l$, $l\neq0$, and frequencies $\pm\delta'_j+l\beta$ with $|\pm\delta'_j+l\beta|\ge\beta_*|l|-\delta_j\ge\beta_*|l|-\frac18\ge\frac12\beta_*|l|$; their coefficients are $\frac12\widehat\chi(\pm l)$, so by Lemma~\ref{lem:osc} their total contribution is at most
\[
2\sum_{l\neq0}\frac{|\widehat\chi(l)|}2\cdot\frac2{\pi T_j\beta_*|l|}
\le\frac{2S'_\chi}{\pi\beta_*T_j}\le\frac{4S'_\chi}{\pi\beta_*T_j}\le\frac{c_0}8
\]
by the second condition in \eqref{eq:Tconditions}. Hence $|A_{T_j}(\varphi_j)|\ge1.8c_0-\frac{c_0}8$.

\emph{(2) Earlier terms are non-resonant (anti-circularity).} Let $1\le i<j$. The frequencies occurring in $A_{T_j}(\varphi_i)$ are $\pm p_i+m\beta$, $m\in\Z$. We claim
\begin{equation}\label{eq:anticirc}
|\pm p_i+m\beta|\ge\frac1{2q_j}\qquad\text{for all }|m|\le\tfrac12q_{j+1}.
\end{equation}
Indeed $p_iq_j\pm mp_j\neq0$ for every $m\in\Z$: otherwise $p_j\mid p_iq_j$, and $\gcd(p_j,q_j)=1$ would force $p_j\mid p_i$, impossible because $0<p_i<p_j$ (the numerators are strictly increasing: $p_2=a_2\ge8>1=p_1$ and $p_{j+1}\ge p_j+p_{j-1}$). Hence $|\pm p_i+m\frac{p_j}{q_j}|\ge\frac1{q_j}$, while $|m|\,|\beta-\frac{p_j}{q_j}|\le\frac{q_{j+1}}2\cdot\frac1{q_jq_{j+1}}=\frac1{2q_j}$ by \eqref{eq:cf}; the triangle inequality gives \eqref{eq:anticirc}. Note that \eqref{eq:anticirc} covers in particular the principal modes of $\varphi_i$: no circularity arises, because the bound holds for all $m$ simultaneously. By Lemma~\ref{lem:osc} and \eqref{eq:blockcoeff}, the modes with $|m|\le\frac12q_{j+1}$ contribute to $|A_{T_j}(\varphi_i)|$ at most $S_\chi\cdot\frac{2q_j}{\pi T_j}$. For the tail $|m|>\frac12q_{j+1}$ we use the trivial bound $|I_T\,e(at)|\le1$: since $q_i\le q_j\le\frac18q_{j+1}$, the relevant indices satisfy $|m\mp q_i|\ge\frac12q_{j+1}-\frac18q_{j+1}\ge\frac14q_{j+1}\ge2T_j$, so the tail is bounded by
\[
\sum_{|\nu|\ge2T_j}|\widehat\chi(\nu)|\le S''_\chi\sum_{|\nu|\ge2T_j}|\nu|^{-2}\le\frac{2S''_\chi}{2T_j-1}\le\frac{2S''_\chi}{T_j}
\]
for $T_j\ge1$. Summing over $1\le i<j$ and using $\sum_{1\le i<j}\varepsilon_i\le\sum_{i\ge1}2^{-i}=1$ together with the third condition in \eqref{eq:Tconditions},
\begin{align*}
\sum_{1\le i<j}\varepsilon_i|A_{T_j}(\varphi_i)|
&\le\frac{2q_jS_\chi}{\pi T_j}\sum_{1\le i<j}\varepsilon_i
 +\frac{2S''_\chi}{T_j}\sum_{1\le i<j}\varepsilon_i\\
&\le\frac{2q_jS_\chi/\pi+2S''_\chi}{T_j}
\le\frac{4q_jS_\chi/\pi+4S''_\chi}{T_j}
\le\frac{c_0\varepsilon_j}{16}.
\end{align*}

\emph{(3) Later terms are small.} By Step~1,
$\varepsilon_i\le(c_0/32)\varepsilon_{i-1}\le\varepsilon_{i-1}/32$ for all $i$. Hence
\[
\sum_{i>j}\varepsilon_i
\le\frac{c_0\varepsilon_j}{32}\sum_{\ell\ge0}32^{-\ell}
\le\frac{c_0\varepsilon_j}{16}.
\]
Since $\|\varphi_i\|_\infty\le1$, it follows that
\[
\sum_{i>j}\varepsilon_i|A_{T_j}(\varphi_i)|
\le\frac{c_0\varepsilon_j}{16}.
\]

\subsection*{Conclusion}
Combining the three estimates,
\begin{align*}
|A_{T_j}(\varphi)|
&\ge\varepsilon_j\Bigl(1.8c_0-\frac{c_0}8\Bigr)
 -\frac{c_0\varepsilon_j}{16}-\frac{c_0\varepsilon_j}{16}\\
&\ge c_0\,\varepsilon_j\ge4^jR(T_j)
\end{align*}
by the first condition in \eqref{eq:Tconditions}. Since $T_j\to\infty$,
\[
\limsup_{T\to\infty}\frac{|A_T(\varphi)|}{R(T)}
\ge\limsup_{j\to\infty}4^j=\infty.
\]
This proves Theorem~\ref{thm:opt}.\hfill$\square$

\begin{remark}\label{rem:opt}
The frequency $\omega=(1,\beta)$ produced above is rationally independent, so the flow is uniquely ergodic and $A_T(\varphi)\to0=\langle\varphi\rangle$. Theorem~\ref{thm:opt} says that no prescribed modulus $R(T)\to0$ is valid uniformly over the full class of rationally independent frequencies, even for smooth mean-zero observables satisfying $\|\varphi\|_{C^1}\le1$.

The additional requirement $T_j\ge q_j^j$ makes the arithmetic nature of the construction explicit. Indeed, \eqref{eq:qgrowth} gives $q_{j+1}\ge8q_j^j$, and hence by \eqref{eq:cf}
\[
\left|\beta-\frac{p_j}{q_j}\right|\le\frac1{q_jq_{j+1}}\le\frac1{8q_j^{j+1}}.
\]
Thus $\beta$ is a Liouville number and in particular belongs to none of the Diophantine classes $\mathcal D(\gamma,\tau)$. Theorems~\ref{thm:main} and~\ref{thm:opt} therefore illustrate complementary mechanisms: quantitative small-divisor control yields an explicit polynomial rate, whereas over the unrestricted class of rationally independent frequencies no universal modulus is possible. The theorem does not assert that every non-Diophantine frequency has arbitrarily slow convergence.
\end{remark}

\section{Open problems}\label{sec:open}

\begin{problem}[Higher smoothness]\label{prob:smooth}
For angular data smoother than Lipschitz --- say $\varphi\in C^{1,s'}(\Sph^d)$ with $s'\in(0,1]$, or $\varphi$ in a Sobolev class $H^\varsigma(\Sph^d)$ with $\varsigma>1$, neither of which is covered by the scale $C^s$, $0<s\le1$, fixed in the Notation --- our method saturates: the cutoff $X$ costs $\eta^{-s}$ but the polar visits only gain $\eta$, and the exponent $\vartheta=\frac{s}{(1+s)(\tau+d)}$ is maximized as $s\uparrow1$ within our proof. Does additional smoothness beyond Lipschitz improve the rate, e.g.\ via spherical-harmonic decompositions replacing the composition with $\Phi_d$ altogether? Two benchmarks are natural here. The first is the Sobolev-type rate for QMC designs on spheres \cite{BrauchartGrabner}. The second is the Koksma--Hlawka route of Remark~\ref{rem:HK}. There the mixed derivative $\partial_{u_1}\cdots\partial_{u_d}(\varphi\circ\Phi_d)$ is integrable on the fundamental domain as soon as $\varphi$ has $d$ bounded derivatives: the polar factors $w(u_\ell)^{\frac1\ell-1}$ are integrable in their own variables, so the condition is one on $\varphi$ alone --- but it is a condition of order $d$, and not of order $2$, since each of the $d$ coordinates is differentiated once and the chain rule places up to $d$ derivatives on $\varphi$. (For $d=2$ the sufficient criterion used here is, for instance, $\varphi\in C^2$; more refined bounded-variation criteria may lower this threshold. For $d\ge3$ the hypothesis $\varphi\in C^{1,s'}$ alone is \emph{not} sufficient for the present mixed-derivative argument.) Consequently, for $\varphi\in C^{d}(\Sph^d)$ the discrepancy of the linear flow \cite{Borda,DumasFischler} applies with no mollification and no cutoff, and \eqref{eq:HKrate} improves to $T^{-\frac1{\tau+1}+\varepsilon}$, since one may take $\epsilon\asymp1$. Determining the exact smoothness threshold $\varsigma=\varsigma(d)$ at which finite Hardy--Krause variation first becomes available --- our sufficient criterion gives $\varsigma\le d$, and it is presumably not optimal --- and interpolating between $\vartheta_{\mathrm{HK}}$ and $\frac1{\tau+1}$ across the range $s\le\varsigma$, would complete the picture begun in Remark~\ref{rem:HK}.
\end{problem}

\begin{problem}[Sharpness of the exponent for Diophantine frequencies]\label{prob:sharp}
Theorem~\ref{thm:opt} shows no uniform rate exists over all rationally independent $\omega$. For fixed $\omega\in\mathcal D(\gamma,\tau)$, what is the optimal exponent $\vartheta^*(s,\tau,d)$? Lower-bound constructions along convergents, in the spirit of Section~\ref{sec:opt} but within the Diophantine class, should give upper bounds on $\vartheta^*$; we do not expect $\frac{s}{(1+s)(\tau+d)}$ to be sharp. As explained in Remark~\ref{rem:singlescale}, the loss in our argument is located in the scale of the cutoff rather than in the geometric exponent of Lemma~\ref{lem:reg}(b); at the Lipschitz endpoint $s=1$ the bound $|\widehat F(k)|\lesssim|k|_\infty^{-1}$ recorded there is already available with no cutoff at all, and what is missing is a substitute for the uniform-norm control of the truncation error. Supplying one --- for instance an occupation-time estimate for the high-frequency tail --- would improve the exponent at $s=1$ from $\frac1{2(\tau+d)}$ to $\frac1{2\tau+2d-1}$, by the heuristic computation carried out in Remark~\ref{rem:singlescale}(b); making that computation rigorous, which requires a quantitative rate for Fej\'er convergence away from the cut, is the first concrete step. Remark~\ref{rem:jump} shows that for $d=2$ the jump-subtraction route reaches $\frac{s}{2(\tau+2)}$ and no further, so it is not by itself such a substitute; whether its recursive version reaches anything better than $\vartheta$ for $d\ge3$ is open as well. Remark~\ref{rem:localization} locates the loss quantitatively: on data vanishing near $\Sigma$, where no cutoff is needed, the same lemmas give $\frac{s}{\tau+d}$; the factor $1+s$ separating it from $\vartheta$ measures exactly what a sharper treatment of the polar collar would have to recover. Finally, $\vartheta^*$ is bounded below by the maximum of $\vartheta$ and the mollified Koksma--Hlawka exponent $\vartheta_{\mathrm{HK}}=\frac{s}{d(\tau+1)}$ of Remark~\ref{rem:HK}, the two crossing at $\tau(d-1-s)=ds$; a genuinely sharp result should explain that crossing rather than take the maximum, and in particular should say whether $\frac1{\tau+1}$ --- the exponent available for $C^{d}$ data, and the benchmark supplied by the Diophantine discrepancy estimate used here --- is attained in the limit $s\uparrow1$ in low dimension.
\end{problem}

\begin{problem}[Principal-value kernels]
Corollary~\ref{cor:hom} handles homogeneous singularities $f=|x-x_0|^{-\alpha}a$ via the natural normalization $W_\alpha$. The genuinely singular case is that of kernels with cancellation, $\langle a\rangle=0$: then the normalized limit vanishes, and the question becomes whether spiral averages, under a second-order normalization, reproduce principal-value integrals $\mathrm{p.v.}\!\int f$ --- for instance Calder\'on--Zygmund convolution kernels evaluated along the trajectory. This requires genuinely singular-integral technology along the curve and is deferred to future work.
\end{problem}

\begin{problem}[Admissible radial profiles]
By Theorem~\ref{thm:abel} and Proposition~\ref{prop:exp}, normalized spiral means for singular data converge for all power-law profiles and fail for exponential ones. Characterize the exact class of profiles $g$ (equivalently, of weights $w=g^{-\alpha}$) for which convergence holds for all continuous angular data and all non-resonant frequencies. Is the condition $\limsup_{T\to\infty}Tw(T)/W(T)<\infty$ of Theorem~\ref{thm:abel} also necessary?
\end{problem}

\begin{problem}[Optimality in higher dimensions]
Extend Theorem~\ref{thm:opt} to all $d\ge2$, and to an arbitrary initial phase. The building blocks and the continued-fraction mechanism are two-dimensional; for $d\ge3$ one expects an analogous construction using simultaneous approximation, with the additional freedom helping rather than hindering, but the anti-circularity estimate \eqref{eq:anticirc} needs a replacement. The restriction $\theta_0=0$ is of a different nature and should be easier to remove: for a general phase the two principal modes contribute $2c_0\,\mathrm{Re}\bigl(e(k\cdot\theta_0)\,I_T\,e(\delta't)\bigr)$, which is bounded below only when the phase does not rotate the resonant contribution to the imaginary axis; adding to each block a second building block in quadrature, or shifting $T_j$ by a bounded amount, should restore the lower bound uniformly in $\theta_0$.
\end{problem}

\section*{Declarations}

\subsection*{Funding}
The author received no funding for this work.

\subsection*{Competing interests}
The author declares no competing interests.

\subsection*{Data availability}
Not applicable: no datasets were generated or analysed during this study.

\end{document}